\newlength{\mywidth}
\DeclareFontFamily{U}{tipa}{}
\DeclareFontShape{U}{tipa}{m}{n}{<->tipa10}{}
\newcommand{\arc@char}{{\usefont{U}{tipa}{m}{n}\symbol{62}}}%
\newcommand{\arc}[1]{\mathpalette\arc@arc{#1}}
\newcommand{\arc@arc}[2]{%
  \sbox0{$\m@th#1#2$}%
  \vbox{
    \hbox{\resizebox{\wd0}{\height}{\arc@char}}
    \nointerlineskip
    \box0
  }%
}
\newtheorem{thm}{Theorem}[section]
\newtheorem{prop}[thm]{Proposition}
\newtheorem{cor}[thm]{Corollary}
\newtheorem*{cor*}{Corollary}
\theoremstyle{definition}
\newtheorem{definition}[thm]{Definition}
\newtheorem{notation}[thm]{Notation}
\newtheorem{thmx}{Theorem}
\renewcommand*\env@matrix[1][\arraystretch]{%
  \edef\arraystretch{#1}%
  \hskip -\arraycolsep
  \let\@ifnextchar\new@ifnextchar
  \array{*\c@MaxMatrixCols c}}
\theoremstyle{remark}
\newtheorem{rem}[thm]{Remark}
\theoremstyle{question}
\tikzset{
  mynode/.style={fill,circle,inner sep=1pt,outer sep=0pt}
}
\begin{document}


\title[Quadrature Domains and the Real Quadratic Family]{Quadrature Domains and the Real Quadratic Family}


\author[K. Lazebnik]{Kirill Lazebnik}

\date{\today}


\maketitle

\begin{abstract} We study several classes of holomorphic dynamical systems associated with quadrature domains. Our main result is that real-symmetric polynomials in the principal hyperbolic component of the Mandelbrot set can be conformally mated with a congruence subgroup of $\textrm{PSL}(2,\mathbb{Z})$, and that this conformal mating is the Schwarz function of a simply connected quadrature domain.



\end{abstract}


\setcounter{tocdepth}{1}

\tableofcontents



\section{Introduction}

Parallels in the study of Kleinian groups and of complex dynamics are generally termed as entries in \emph{Sullivan's dictionary}. We refer to Section 3.5 of \cite{BF14} for an overview. In this paper we are concerned mainly with the following question: when can we ``combine'' the dynamical plane of a quadratic polynomial with that of a Kleinian group, and what is the resulting object? Let us introduce some terminology in order to make this question more precise.

Associated with the family of quadratic polynomials $p_c(z):=z^2+c$ is the Mandelbrot set $\mathcal{M}$: defined as those $c\in\mathbb{C}$ for which $p_c^n(0)\not\rightarrow\infty$ as $n\rightarrow\infty$. We list \cite{L6} as a comprehensive reference. Here we will primarily be concerned with the principal hyperbolic component of the Mandelbrot set: namely those $c\in\mathbb{C}$ such that $p_c$ has a finite attracting fixed point. We will denote by $K(p_c)$ the filled Julia set of $p_c$, and by $\mathcal{J}(p_c)$ the Julia set of $p_c$.

The Kleinian group relevant to our study is the congruence subgroup $\Gamma$ of $\textrm{PSL}(2,\mathbb{Z})$ defined by generators $\alpha(z):=-1/z$ and $\beta(z):=z+2$. A fundamental domain $U$ for $\Gamma$ is shown in Figure \ref{fig:fundamental_dom_group}. Associated with $\Gamma$ we define the following function $\rho: \mathbb{H}\setminus U\rightarrow \mathbb{H}$:\begin{align}
 \rho(z) := \left\{
        \begin{array}{ll}
        \alpha(z) & \quad \textrm{ if } |z|<1, \\            
        \beta^{-1}(z) & \quad \textrm{ if } \textrm{Re}(z)>1, \\
        \beta(z) & \quad \textrm{ if } \textrm{Re}(z)<-1. \\        
        \end{array}
    \right.
     \end{align} We will discuss further the connection between $\Gamma$ and $\rho$ in Section \ref{mod_group_struct}. 
         
\begin{figure}
\centering
\scalebox{.3}{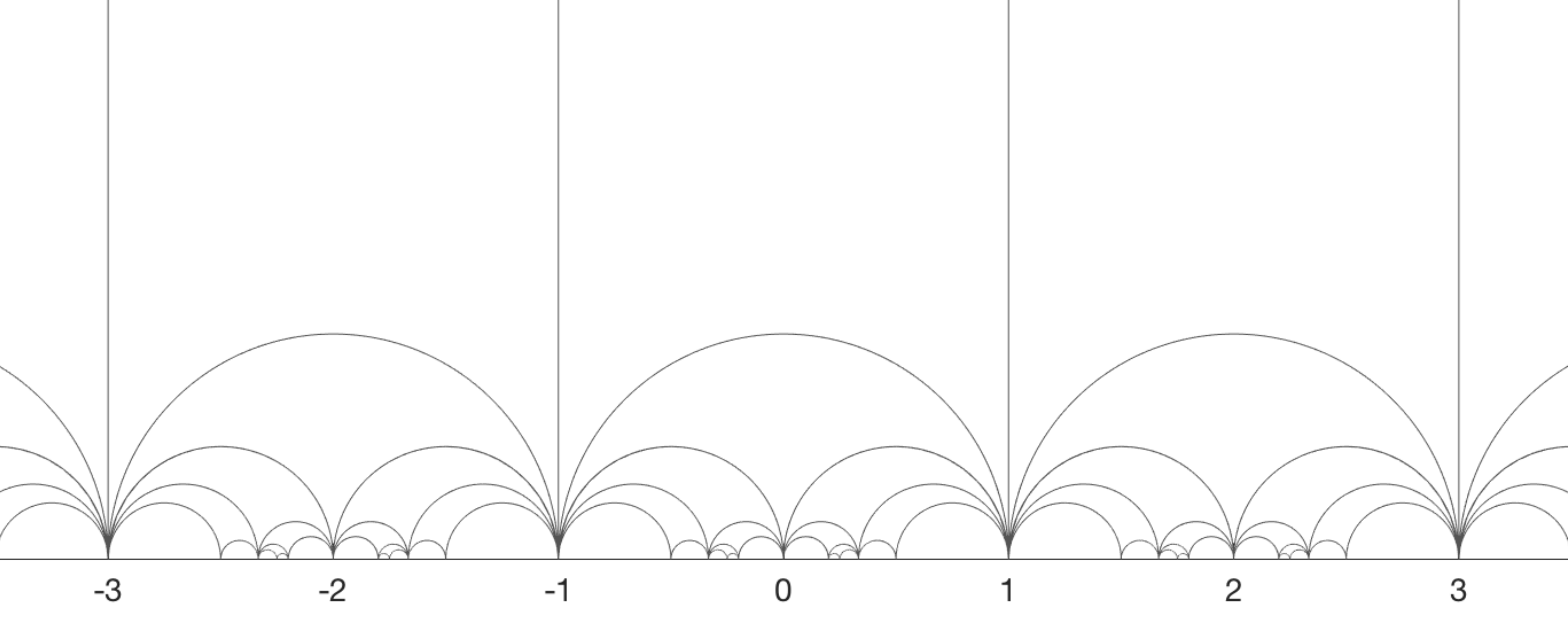}
\captionsetup{width=.9\textwidth}
\caption{Pictured is a fundamental domain $U$ for the group $\Gamma$ and images of $U$ under several elements of the group. Images of $U$ under group elements consisting of words of length $\leq2$ are labelled.}
\label{fig:fundamental_dom_group}
\end{figure}

Now let $c$ belong to the principal hyperbolic component of $\mathcal{M}$. An initial connection between $\Gamma$ and $p_c$ is as follows: the maps $p_c: \mathcal{J}(p_c) \rightarrow \mathcal{J}(p_c)$ and $\rho: \hat{\mathbb{R}} \rightarrow \hat{\mathbb{R}}$ are topologically conjugate (this is explained in Remark \ref{mark_part_rem}). This leads to the following question: is there a holomorphic map $p_c\sqcup\Gamma$ exhibiting simultaneously the dynamics of $p_c$ on $K(p_c)$, and $\Gamma$ on $\overline{\mathbb{H}}$? To state this question more precisely we will need to discuss the notion of \emph{conformal mating}, which we defer to Section \ref{conf_mating_def_sec}. Our first result (illustrated in Figure \ref{fig:scqds.pdf_tex}) is that for $c\in\mathbb{R}$, the conformal mating $p_c\sqcup\Gamma$ exists, and is in fact an object well studied in its own right.


%


\begin{thmx}\label{main_theorem} Let $c\in(-.75,.25)$. Then there exists a conformal mating $p_c\sqcup \Gamma$ as the Schwarz function of a simply connected quadrature domain. \end{thmx}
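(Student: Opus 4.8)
The plan is to construct the conformal mating explicitly as a Schwarz function by building, piece by piece, a holomorphic map that simultaneously carries the polynomial dynamics on the filled Julia set side and the group dynamics on the upper-half-plane side, and then to recognize the domain on which it lives as a quadrature domain.

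\medskip

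\noindent\textbf{Setup and strategy.} First I would fix $c\in(-.75,.25)$ real, so that $p_c$ has an attracting fixed point on the real axis and $K(p_c)$ is a quasidisk symmetric about $\mathbb{R}$. Let $\phi: \hat{\mathbb{C}}\setminus \overline{\mathbb{D}} \rightarrow \hat{\mathbb{C}}\setminus K(p_c)$ be the B\"ottcher coordinate conjugating $z\mapsto z^2$ to $p_c$ near infinity; it extends to a homeomorphism of the boundary circles by the Carath\'eodory theorem since $\partial K(p_c)=\mathcal{J}(p_c)$ is a Jordan curve in this parameter range. On the group side, the relevant domain is the complement of (a fundamental-domain-based) region in $\overline{\mathbb{H}}$, where $\rho$ encodes the $\Gamma$-action on the boundary $\hat{\mathbb{R}}$. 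The key observation motivating the whole construction is the stated topological conjugacy between $p_c:\mathcal{J}(p_c)\rightarrow\mathcal{J}(p_c)$ and $\rho:\hat{\mathbb{R}}\rightarrow\hat{\mathbb{R}}$, which tells us the two boundary dynamical systems agree up to a homeomorphism, so that gluing along the boundary is combinatorially consistent.

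\medskip

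\noindent\textbf{Main steps.} I would proceed as follows. (i) Form the topological gluing: take the abstract topological sphere obtained by identifying $\partial K(p_c)$ with $\hat{\mathbb{R}}$ via the boundary conjugacy, producing a space carrying a partially-defined map that equals $p_c$ on the Julia-set side and $\rho$ (hence $\Gamma$) on the half-plane side. (ii) Equip this sphere with a conformal structure: away from the gluing locus the structures from $K(p_c)$ and $\overline{\mathbb{H}}$ are already conformal, and the identification curve is a quasicircle, so by the measurable Riemann mapping theorem (or directly, since the welding curve here is a quasicircle and the gluing homeomorphism is quasisymmetric) one obtains a genuine Riemann surface conformally $\hat{\mathbb{C}}$. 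This produces conformal isomorphisms $\Phi_1$ from $K(p_c)$ and $\Phi_2$ from $\overline{\mathbb{H}}$ into $\hat{\mathbb{C}}$, matching on the welding curve, and a holomorphic map $F:=p_c\sqcup\Gamma$ defined off the welding curve. (iii) Identify $F$ with a Schwarz function: the map $\Phi_2\circ\rho\circ\Phi_2^{-1}$ on the image of the half-plane is an anti-holomorphic-to-holomorphic relation whose fixed-point set is the welding curve $\Omega=\partial\,(\text{image of }K(p_c))$; I would verify that on a neighborhood of this curve $F$ agrees with the reflection across it, which is exactly the defining property of a Schwarz function $S$ of the domain $\Omega$ enclosed by the image of the Julia set. (iv) Verify the quadrature-domain property: a bounded simply connected domain is a quadrature domain precisely when its Schwarz function extends meromorphically to all of the domain with finitely many poles; here $S$ extends as the globally defined meromorphic map $F$ restricted to the $K(p_c)$-image, whose only pole comes from the critical-point/center data of $p_c$, giving a one-point quadrature identity.

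\medskip

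\noindent\textbf{Main obstacle.} The hard part will be step (ii)--(iii): producing the conformal welding with enough regularity to conclude not merely that an abstract mating exists, but that the resulting reflection is literally the Schwarz function of an honest (simply connected) quadrature domain. The restriction $c\in(-.75,.25)$ is exactly what makes $\mathcal{J}(p_c)$ a quasicircle and keeps the attracting fixed point real, so the welding homeomorphism is quasisymmetric and the glued curve is a quasicircle; I expect the delicate point is checking that the Schwarz function so obtained is single-valued and meromorphic \emph{up to and across} $\Omega$ with only the expected poles, rather than merely defined on one side, and that the $\Gamma$-side map genuinely realizes the group action (all three branches of $\rho$, the parabolic and elliptic generators) under a \emph{single} conformal identification. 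Once single-valuedness and the pole count are pinned down, the quadrature-domain conclusion and simple connectivity follow from the classical characterization of quadrature domains via meromorphic Schwarz functions.
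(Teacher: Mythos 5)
Your welding strategy has two concrete gaps, either of which is fatal as written. First, the gluing homeomorphism is \emph{not} quasisymmetric and the resulting curve is \emph{not} a quasicircle, so step (ii) cannot be carried out by the measurable Riemann mapping theorem as you propose. The boundary conjugacy $\mathcal{E}$ identifies the action of $\rho$ on $\hat{\mathbb{R}}$ --- which has parabolic fixed points coming from $\beta(z)=z+2$ --- with the uniformly expanding map $z\mapsto z^2$ on $\mathbb{T}$; such a conjugacy distorts scales polynomially versus exponentially at the parabolic orbit and is never quasisymmetric, and correspondingly the limit curve $\mathcal{J}(\sigma)$ has cusps. Conformal welding along non-quasisymmetric circle homeomorphisms is exactly the delicate existence/uniqueness problem your outline assumes away. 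The paper avoids it entirely: it starts from the explicit rational univalent map $f_0(z)=z/(1+z^3/2)$, whose Schwarz function $\sigma_0$ is already a mating of $z\mapsto z^2$ with $\Gamma$ (the group side is handled by an explicit lifting construction in Theorem \ref{group_conjugacy}, and the Jordan-curve/local-connectivity statements are imported from \cite{LLMM1}), and then deforms only the polynomial part by quasiconformal surgery on the superattracting basin, replacing $z^2$ by a Blaschke product $B_\lambda$ with the prescribed multiplier. The group structure is held rigid throughout, so no welding problem ever arises.

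Second, you misidentify the quadrature domain: it is not ``the domain enclosed by the image of the Julia set.'' The quadrature domain is $f(\mathbb{D})$, the complement of the fundamental tile $T_f$; its boundary $f(\mathbb{T})$ lies in the \emph{group} part of the dynamical plane and corresponds under the uniformization of $I(\sigma)$ to the boundary of the fundamental domain $U$ of $\Gamma$, not to the welding curve $\hat{\mathbb{R}}$. The Schwarz identity $\sigma(z)=\overline{z}$ holds on $f(\mathbb{T})$, where it encodes precisely the generators $\alpha,\beta,\beta^{-1}$ assembled into $\rho$; the filled Julia set sits in the interior of the quadrature domain. Moreover, even with the correct domain, an abstract mating produced by welding carries no mechanism forcing the boundary identity $\sigma(z)=\overline{z}$: that identity is a rigid normalization equivalent (for simply connected domains) to the Riemann map being rational. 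The paper obtains it by keeping every object $\mathbb{R}$-symmetric --- the Beltrami coefficient $\nu_\lambda$ and its integrating map $\psi_\lambda$ satisfy $\psi_\lambda(\overline{z})=\overline{\psi_\lambda(z)}$ --- so that the straightened map still equals $\overline{z}$ on $\psi_\lambda\circ f_0(\mathbb{T})$, and then invokes the Aharonov--Shapiro criterion (Lemma 2.3 of \cite{AS}). Without an analogous mechanism, your steps (iii)--(iv) assert the conclusion rather than prove it.
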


\begin{figure}
\centering
\scalebox{.15}{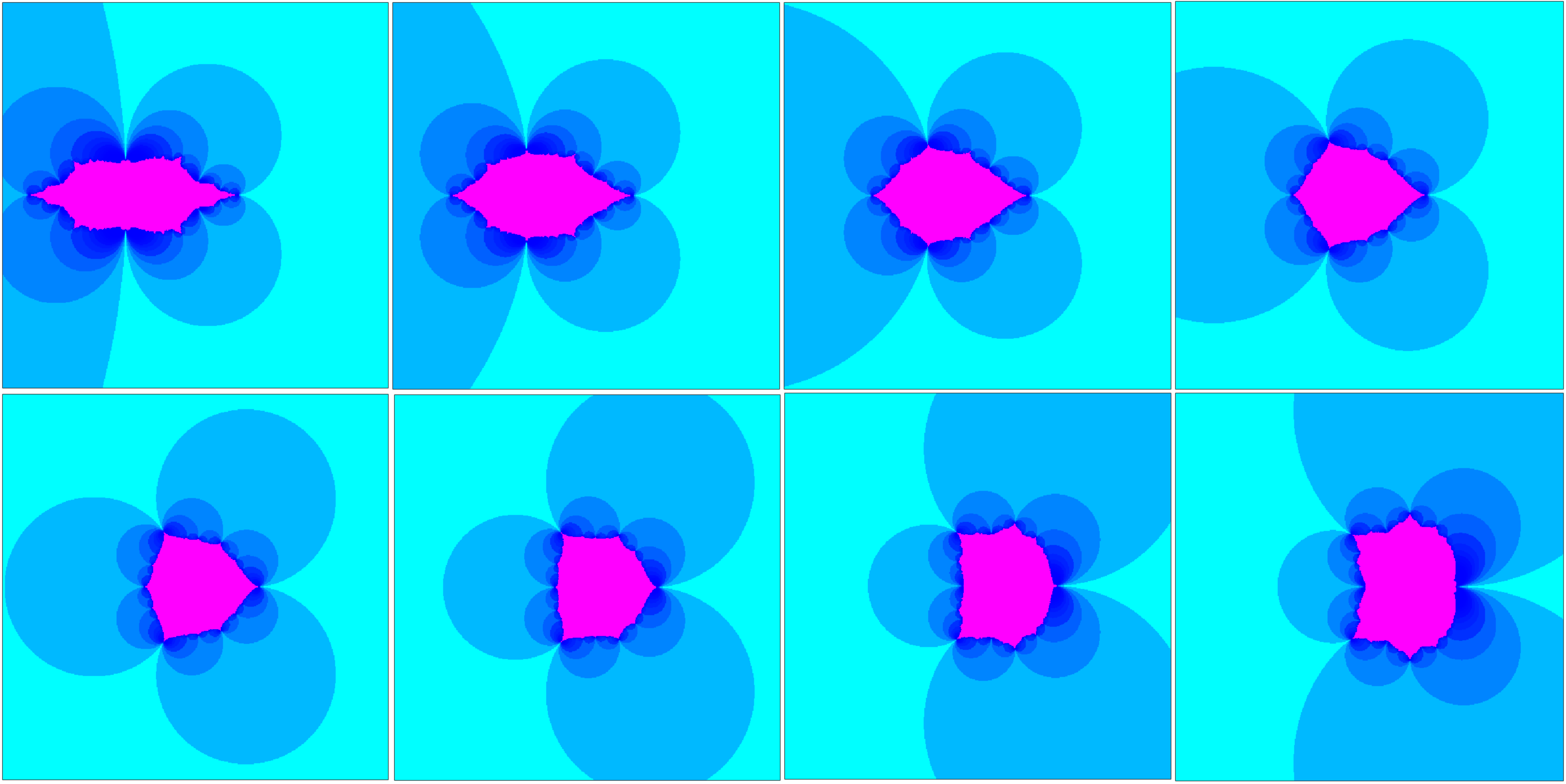}
\captionsetup{width=.9\textwidth}
\caption{Pictured is the dynamical plane of the mating $p_c\sqcup\Gamma$ as in Theorem \ref{main_theorem} with $c$ ranging in $(-.75, .25)$. The simply connected quadrature domain is the complement of the turqoise region, with Schwarz function $\sigma=p_c\sqcup\Gamma$. In pink is the \emph{filled Julia set} $K(\sigma)$ (see Definition \ref{Julia_Fatou}), where $\sigma$ is conformally conjugate to $p_c: K(p_c) \rightarrow K(p_c)$. The complement of $K(\sigma)$ is the \emph{escaping set} $I(\sigma)$ (see Definition \ref{tile_escaping}) of $\sigma$, where $\sigma$ is described by the action of the group $\Gamma$ on $\mathbb{H}$ (see Theorem \ref{group_conjugacy}). The map $\sigma$ on the common boundary of $I(\sigma)$, $K(\sigma)$ is topologically conjugate both to $p_c: \mathcal{J}(p_c) \rightarrow \mathcal{J}(p_c)$ and $\rho: \hat{\mathbb{R}} \rightarrow \hat{\mathbb{R}}$.   }
\label{fig:scqds.pdf_tex}
\end{figure}


A planar domain $\Omega$ is a \emph{quadrature domain} if there exists a meromorphic function $\sigma: \Omega \rightarrow \hat{\mathbb{C}}$ such that $\sigma$ is meromorphic in $\Omega$ and has an extension to $\overline{\Omega}$ satisfying $\sigma(z)=\overline{z}$ on $\partial\Omega$. The function $\sigma$ is then called the \emph{Schwarz function} of $\Omega$. Quadrature domains have been studied in connection to several different areas of analysis, including extremal problems for analytic functions, the Hele-Shaw flow in fluid dynamics, and potential theory. We refer to \cite{MR2129731},  \cite{MR3454377} for details and a broader overview, both of quadrature domains and the aforementioned connections. We mention two examples of quadrature domains. The first is that a simply connected domain is a quadrature domain if and only if the corresponding Riemann map is a rational function (Lemma 2.3, \cite{AS}). The second example (see the left-most curve in Figure \ref{fig:deltoid_flow.pdf_tex}) is that of a disjoint union of a cardioid with the exterior of a circle.

The study of the dynamics of the \emph{Schwarz reflection map} defined by $z\mapsto\overline{\sigma(z)}$ was initiated in \cite{MR3454377}, and several works have since studied the dynamics of $\overline{\sigma}$ for various classes of quadrature domains: see \cite{LLMM1}, \cite{LLMM2}, \cite{lee2019schwarz}, \cite{lodge2019dynamical}, \cite{LMM1}, \cite{lazebnik2020bers}. Associated with $\overline{\sigma}$ is a natural dynamical partition of $\hat{\mathbb{C}}$. In the presence of $\mathbb{R}$-symmetry, a similar partition holds for $\sigma$ (see Definitions \ref{tile_escaping}, \ref{Julia_Fatou}). Indeed, one can conclude the following from the work of \cite{LLMM2} together with a variant of Theorem \ref{group_conjugacy}:




\begin{thm}\label{main_cor} \emph{(\cite{LLMM2}) Let $c\in[-2,-.75]$ be such that $p_c$ is geometrically finite. Then there exists a conformal mating $p_c\sqcup \Gamma$ as the Schwarz function of a disjoint union of a cardioid and circle.}
\end{thm}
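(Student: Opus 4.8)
The plan is to obtain the statement as a direct consequence of \cite{LLMM2}, using the $\mathbb{R}$-symmetry of the cardioid-and-circle configuration to pass to the holomorphic Schwarz function $\sigma$, and invoking a variant of Theorem \ref{group_conjugacy} to identify the group side of the mating with $\Gamma$. Recall that \cite{LLMM2} analyze the anti-holomorphic Schwarz reflection $\overline{\sigma}$ of the quadrature domain consisting of a cardioid together with the exterior of a circle; under geometric finiteness they show that $\overline{\sigma}$ is a conformal mating of a quadratic anti-polynomial with the Nielsen map of the relevant reflection group (the anti-holomorphic analogue of the map $\rho$). The first step is to record, for each geometrically finite $c \in [-2,-.75]$, which anti-polynomial and which group occur: the anti-polynomial is $z \mapsto \overline{z}^2 + c$, and the reflection group is the one whose orientation-preserving subgroup is $\Gamma$.

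Next I would pass from this anti-holomorphic description to the holomorphic one. Since $c$ is real, $p_c$ commutes with the reflection $z \mapsto \overline{z}$ and the domain is symmetric across $\mathbb{R}$, so $\sigma$ and $\overline{\sigma}$ are conjugate by $z \mapsto \overline{z}$; moreover $z \mapsto \overline{z}^2 + c$ is precisely $p_c$ precomposed with this reflection. Transporting the two conjugacies of \cite{LLMM2} through the symmetry then yields: on the non-escaping set $K(\sigma)$ (Definition \ref{Julia_Fatou}) the map $\sigma$ is conformally conjugate to $p_c : K(p_c) \to K(p_c)$, and on the escaping (tiling) set $I(\sigma)$ (Definition \ref{tile_escaping}) the map $\sigma$ is governed by the group action. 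The variant of Theorem \ref{group_conjugacy} is applied precisely here, upgrading the Nielsen-map description of $\overline{\sigma}$ on the tiling set into the statement that $\sigma : I(\sigma) \to I(\sigma)$ is conjugate to the action of the holomorphic group $\Gamma$ on $\overline{\mathbb{H}}$.

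It remains to verify that these two conjugacies agree along the common boundary $\partial K(\sigma) = \partial I(\sigma)$, which is at once a conformal copy of the Julia set $\mathcal{J}(p_c)$ and of the limit set of $\Gamma$, and that $\sigma$ there is topologically conjugate both to $p_c : \mathcal{J}(p_c) \to \mathcal{J}(p_c)$ and to $\rho : \hat{\mathbb{R}} \to \hat{\mathbb{R}}$, compatibly with the semiconjugacy of Remark \ref{mark_part_rem}. Once this compatibility is checked, the two pieces assemble into a single map realizing the conformal mating $p_c \sqcup \Gamma$ in the sense to be made precise in Section \ref{conf_mating_def_sec}, and this map is by construction the Schwarz function of the cardioid-and-circle domain.

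The main obstacle is the translation carried out in the second step: reconciling the anti-holomorphic reflection-group (Nielsen map) formalism of \cite{LLMM2} with the holomorphic group $\Gamma$ and piecewise-M\"obius map $\rho$ of the present paper, and then checking that the resulting partial conjugacies glue continuously across the limit set to give an honest conformal mating. The hypothesis that $p_c$ be geometrically finite is exactly what controls the critical orbit — permitting parabolic points (such as $c=-.75$) and strictly preperiodic behavior (such as $c=-2$) while ruling out a mating obstruction — so that the boundary identification is well defined.
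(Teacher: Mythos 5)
Your proposal follows exactly the route the paper itself indicates: the paper gives no written proof of this theorem, deriving it by citing \cite{LLMM2} for the anti-holomorphic Schwarz reflection of the cardioid-and-circle together with ``a variant of Theorem \ref{group_conjugacy}'' for the identification of the group side with $\Gamma$, which is precisely your outline. One small correction: by $\mathbb{R}$-symmetry the relation is $\overline{\sigma(z)}=\sigma(\overline{z})$, so the Schwarz reflection is $\sigma$ \emph{composed} with $z\mapsto\overline{z}$ rather than conjugate to it by $z\mapsto\overline{z}$ (a holomorphic map cannot be conjugate to an anti-holomorphic one); this still gives $I(\overline{\sigma})=I(\sigma)$ and the transfer of the two conjugacies that you need, as in Remark \ref{rel_with_LLMM1}.
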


\noindent Taken together, Theorem \ref{main_theorem} and Theorem \ref{main_cor} yield the following description: 

\begin{cor}\label{main_cor2} \emph{Let $c\in[-2,.25)$, and assume furthermore $p_c$ is geometrically finite if $c\in[-2,-.75]$. Then there exists a conformal mating $p_c\sqcup \Gamma$ as the Schwarz function of a: \begin{enumerate} \item simply connected quadrature domain if $c\in(-.75,.25)$ \item disjoint union of a cardioid and circle if $c\in[-2,-.75]$. 
\end{enumerate} }
\end{cor}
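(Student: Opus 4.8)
The plan is to observe that Corollary~\ref{main_cor2} is an immediate consequence of assembling Theorem~\ref{main_theorem} and Theorem~\ref{main_cor}, whose hypotheses partition the parameter interval. First I would record the decomposition $[-2,.25) = [-2,-.75]\,\sqcup\,(-.75,.25)$, where the endpoint $c=-.75$ is assigned to the left piece (matching the closed/open conventions in the two theorems). Thus every $c\in[-2,.25)$ lies in exactly one of the two subintervals, and I would proceed by this dichotomy.

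For $c\in(-.75,.25)$, Theorem~\ref{main_theorem} directly produces the conformal mating $p_c\sqcup\Gamma$ as the Schwarz function of a simply connected quadrature domain, which is precisely conclusion (1). For $c\in[-2,-.75]$, the standing hypothesis of the corollary guarantees that $p_c$ is geometrically finite, so Theorem~\ref{main_cor} applies verbatim and yields $p_c\sqcup\Gamma$ as the Schwarz function of a disjoint union of a cardioid and a circle, which is conclusion (2). Since these cases exhaust all parameters in $[-2,.25)$ and each invokes a result already established, the assembly is complete.

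The only point I would pause to verify is that the symbol ``$p_c\sqcup\Gamma$'' carries the same meaning across the dichotomy: both constituent theorems are phrased in terms of the single fixed congruence subgroup $\Gamma$ and the common conformal-mating formalism of Section~\ref{conf_mating_def_sec}, so the statement is unambiguous and the two conclusions glue into one coherent description. I expect no substantive obstacle at the level of the corollary; the genuine mathematical content resides entirely in Theorem~\ref{main_theorem} and Theorem~\ref{main_cor}, and the corollary merely records their combined reach over the real slice $[-2,.25)$ of the principal hyperbolic component.
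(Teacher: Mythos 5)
Your proposal is correct and matches the paper exactly: the paper offers no separate argument for Corollary~\ref{main_cor2}, simply noting that Theorems~\ref{main_theorem} and~\ref{main_cor} ``taken together'' yield it, which is precisely your case split on $[-2,-.75]$ versus $(-.75,.25)$.
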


Let us discuss cases (1) and (2) in Corollary \ref{main_cor2}. As $c\in\mathbb{R}$ passes through $-.75$, the dynamics of $p_c$ undergo the well-known phenomenon of bifurcation: the finite attracting fixed point of $p_c$ in case (1) ``becomes'' a period 2 attracting cycle as one passes to case (2). We observe here the following for the corresponding quadrature domains in (1) and (2): the quadrature domains in case (1) \emph{converge geometrically} to those in case (2). This is illustrated in Figure \ref{fig:deltoid_flow.pdf_tex}. The group structure $\Gamma$, on the other hand, is the same in cases (1) and (2).







\begin{figure}
\centering
\scalebox{.18}{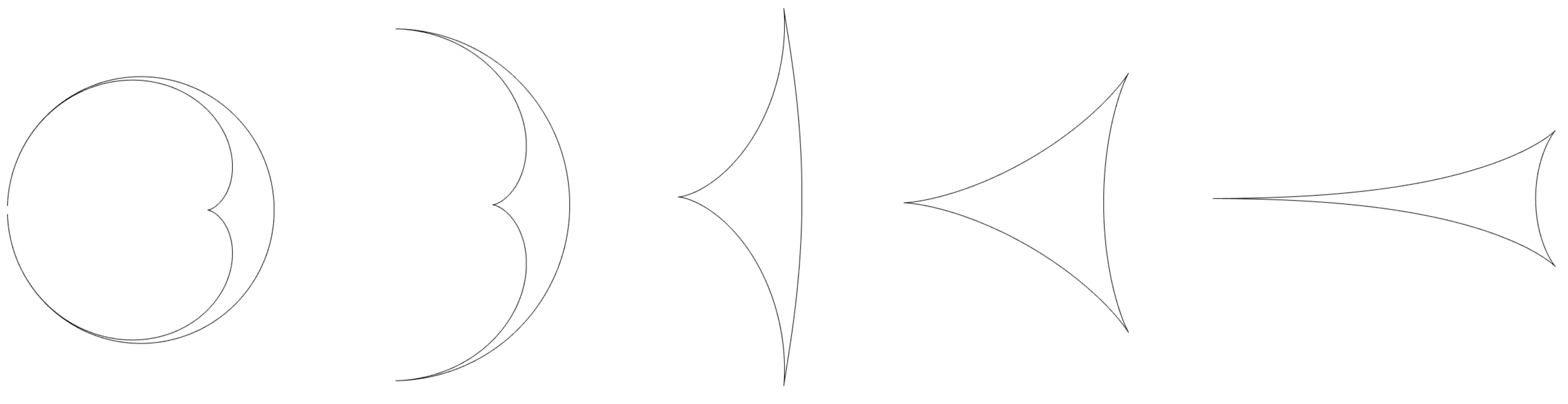}
\captionsetup{width=.9\textwidth}
\caption{Pictured are images of $\mathbb{T}$ under a 1-parameter family of maps $f$ univalent in $\mathbb{D}$. The unbounded components of $\hat{\mathbb{C}}\setminus f(\mathbb{T})$ are quadrature domains, and the bounded components of $\hat{\mathbb{C}}\setminus f(\mathbb{T})$ are usually called \emph{droplets}. This figure shows a phase transition in which a family of simply connected quadrature domains converge geometrically to a quadrature domain with two components: the cardioid and circle, approximated by the left-most (simply connected) quadrature domain.}
\label{fig:deltoid_flow.pdf_tex}
\end{figure}

In fact, we will study a broader class of holomorphic dynamical systems $\mathcal{R}$ (see Section \ref{dyn_Sch_func}) associated with a simply connected quadrature domain, which includes as a special case those systems in Theorem \ref{main_theorem}. Numerical evidence indicates a rich quadratic-like structure in the parameter space of $\mathcal{R}$: see Figure \ref{fig:parameter_comparison}.

\begin{figure}
\centering
\scalebox{.12}{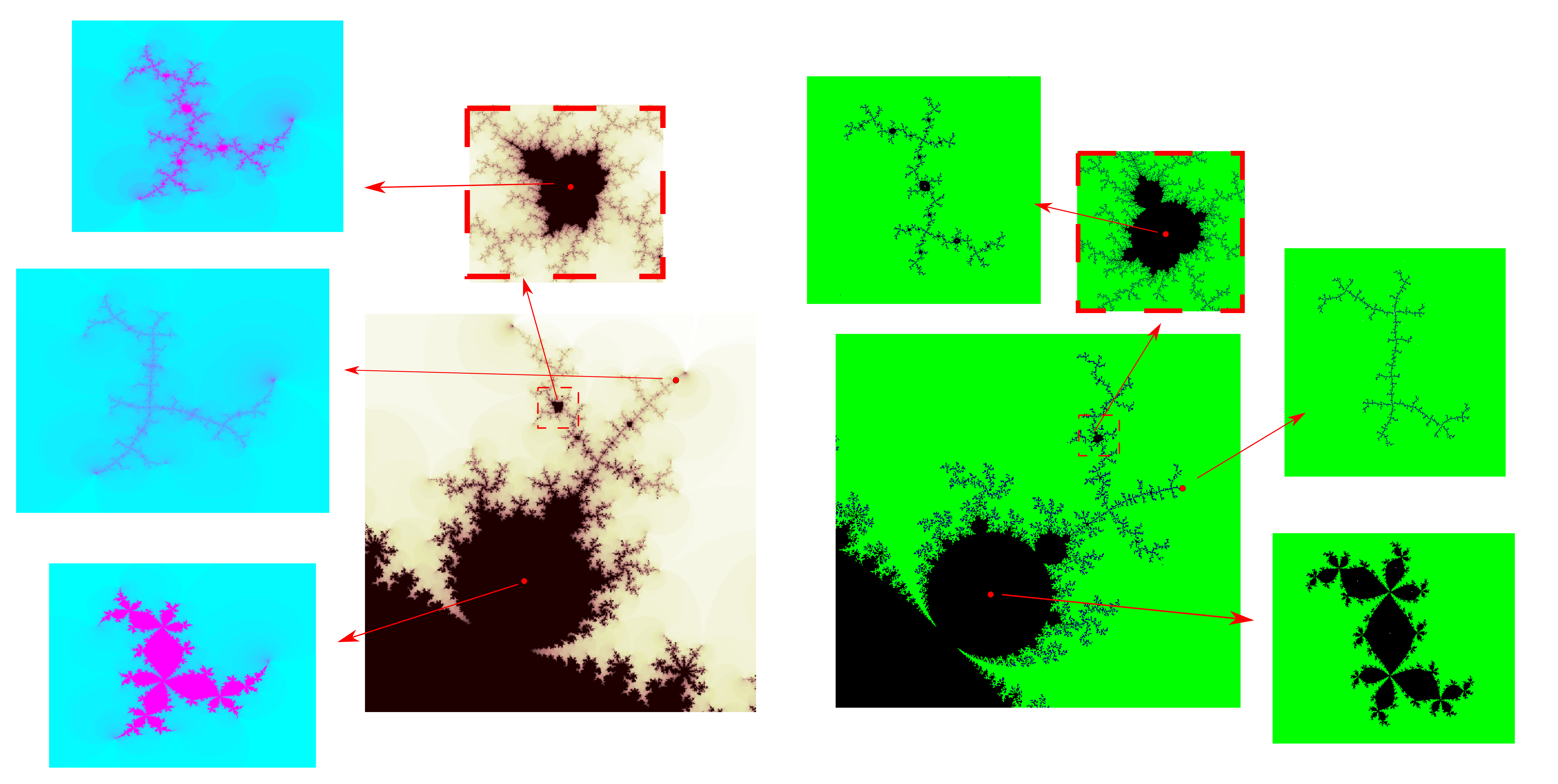}
\captionsetup{width=.9\textwidth}
\caption{Pictured on the right is a ``bulb'' of the Mandelbrot set. Also indicated are Julia sets arising from several parameters in this bulb. Pictured on the left is a parameter space of a family $\sigma_f$ as in Definition \ref{sigma_definition_larger} arising from a 1-parameter family $f$ of univalent maps. Sample filled Julia sets corresponding to several parameters are illustrated. The right-hand side figures were generated with the program \emph{Mandel} of Jung.   }
\label{fig:parameter_comparison}
\end{figure}

Further, it is possible to draw several conclusions beyond the $\mathbb{R}$-symmetric and degree $2$ setting of Theorem \ref{main_theorem}. In the following, $\Gamma_d$ denotes a Kleinian group which is a higher-degree analogue of $\Gamma$ (see Figure \ref{fig:fund_dom_next.pdf_tex}) which we will define in Section \ref{sim_unif_sec}, and $\beta(\Gamma_d)$ its Bers slice. We will sketch a proof of the following in Section \ref{sim_unif_sec}:

\begin{thmx}\label{Theorem_B} Let $d\geq2$, and $\beta\in\beta(\Gamma_d)$. Suppose $p\in\textrm{Pol}_d$ is in the main hyperbolic component in the parameter space of $\textrm{Pol}_d$. Then there exists a conformal mating $p_c\sqcup \beta$ of $p_c$ with $\beta$. 
\end{thmx}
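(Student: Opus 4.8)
The plan is to realize every mating $p\sqcup\beta$ as a quasiconformal deformation of a single \emph{base mating}, and then to straighten an invariant complex structure with the measurable Riemann mapping theorem. The two hypotheses are exactly what make this work: a polynomial $p$ in the main hyperbolic component of $\textrm{Pol}_d$ is quasiconformally conjugate to $z\mapsto z^d$ on a neighborhood of its filled Julia set, with $\mathcal{J}(p)$ a quasicircle; and by definition of the Bers slice, $\beta\in\beta(\Gamma_d)$ is a quasiconformal deformation of $\Gamma_d$ supported on the component of the domain of discontinuity corresponding to the group side of the mating. Thus both $p$ and $\beta$ lie in quasiconformal families anchored at the basepoints $z^d$ and $\Gamma_d$.

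\textbf{Step 1 (base mating).} First I would produce the conformal mating $\sigma_0:=z^d\sqcup\Gamma_d$, generalizing to degree $d$ the construction underlying Theorem \ref{main_theorem} and the conjugacy of Theorem \ref{group_conjugacy}. This yields a holomorphic map $\sigma_0$ whose dynamical plane (in the sense of Definitions \ref{tile_escaping} and \ref{Julia_Fatou}) splits into a filled-Julia-set piece $K$, on which $\sigma_0$ is conformally conjugate to $z^d$, and a complementary escaping piece carrying the action of $\Gamma_d$; the two pieces meet along a quasicircle equator on which the two dynamics are matched through the boundary conjugacy between $z^d$ on $\mathcal{J}(z^d)$ and the $\Gamma_d$-action.

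\textbf{Steps 2--3 (deform and straighten).} The conjugacy of $z^d$ to $p$ pulls the standard structure back to a $z^d$-invariant Beltrami coefficient $\mu_p$ on $K$, while the Bers deformation gives a $\Gamma_d$-invariant coefficient $\mu_\beta$ on the escaping piece. Transporting both to the plane of $\sigma_0$ and setting $\mu=\mu_p$ on $K$, $\mu=\mu_\beta$ on the escaping set, and $\mu=0$ on the (measure-zero) quasicircle equator defines a $\sigma_0$-invariant coefficient with $\|\mu\|_\infty<1$. Integrating $\mu$ by the measurable Riemann mapping theorem yields a quasiconformal $h$ with $h_*\mu\equiv 0$, and invariance makes $\sigma:=h\circ\sigma_0\circ h^{-1}$ holomorphic; by construction $\sigma$ is conformally conjugate to $p$ on $h(K)$ and realizes the $\beta$-deformed group action on the complement, so it is the sought mating $p\sqcup\beta$.

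The main obstacle is the compatibility implicit in Steps 2--3: one must verify that $\mu_p$ and $\mu_\beta$, defined independently on the two sides, assemble into a coefficient invariant under the \emph{full} dynamics of $\sigma_0$ and not merely under $z^d$ and $\Gamma_d$ separately. Concretely, the boundary conjugacy of Step 1 must intertwine the two invariant structures---equivalently, the marking carried by the Bers-slice deformation must agree with the combinatorial identification of $\mathcal{J}(z^d)$ with the limit-set boundary of $\Gamma_d$. Since the equator is a quasicircle the measure-theoretic side of the gluing is harmless, so all of the content lies in this dynamical matching, together with checking that the topological quotient is a sphere on which $\sigma_0$ descends to a well-defined holomorphic map; this is where a careful version of the sketch must concentrate.
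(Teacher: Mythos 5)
Your overall architecture is the same as the paper's: take the base mating $z^d\sqcup\Gamma_d$ realized as the Schwarz function of $f(z)=z/(1+z^{d+1}/d)$, install an invariant Beltrami coefficient separately on the polynomial side and the group side, and straighten. The group side of your argument is fine, and you correctly isolate the equator-compatibility issue as the delicate point (the paper handles it by showing the common boundary is a Jordan curve and then running the same rigidity argument as in the proof of Theorem \ref{main_theorem}).

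However, Step 2 on the polynomial side rests on a false premise. A polynomial $p$ in the main hyperbolic component of $\textrm{Pol}_d$ is \emph{not} quasiconformally conjugate to $z\mapsto z^d$ on a neighborhood of its filled Julia set unless $p$ is the center of the component: any topological conjugacy would have to match the attracting fixed point of $p$ (local degree $1$ when the multiplier is nonzero) with the superattracting fixed point of $z^d$ (local degree $d$), and local degree at a fixed point is a topological invariant. So there is no conjugacy through which to pull back the standard structure to a $z^d$-invariant coefficient $\mu_p$ on $K$, and that step fails. The repair is the standard Blaschke-product surgery, which is what the paper does: conjugate $p|_{A_p}$ to an expanding degree-$d$ Blaschke product $B$ on $\mathbb{D}$, build a quasiregular map $g$ equal to $z^d$ near $\mathbb{T}$ and to $B$ near $0$ by interpolating in a round annulus $\mathbb{D}_{\sqrt r}\setminus\mathbb{D}_r$, spread the resulting $g$-invariant Beltrami coefficient by pullback, and transport it into the basin of $\sigma_0$ via the B\"ottcher coordinate. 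This produces a quasiregular modification $F$ of $\sigma_0$ (not $\sigma_0$ itself) together with an $F$-invariant coefficient; straightening $F$ then gives the mating. Note also that for $d>2$ the rigidity step at the equator needs a normalization, since the orientation-preserving self-conjugacies of $z\mapsto z^d$ on $\mathbb{T}$ are the rotations by $(d-1)$-st roots of unity rather than just the identity.
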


\noindent The main idea of the proof is that of simultaneous uniformization due to Bers (see \cite{10.1112/blms/4.3.257}): after verifying the statement of Theorem \ref{Theorem_B} for a ``base case'' of $p(z)=z^d$ and $\Gamma_d$ (where the mating is a Schwarz function $\sigma$), the more general setting of Theorem \ref{Theorem_B} will follow from applying quasiconformal surgery simultaneously in the group and polynomial parts of the dynamical plane of $\sigma$.  Indeed, a similar strategy will be applied in the proof of Theorem \ref{main_theorem}, although in the setting of Theorem \ref{main_theorem} the group structure is rigid and so we can only vary the polynomial dynamics. We emphasize that although the conformal mating  $p_c\sqcup \beta$ is shown to exist in the proof of Theorem \ref{Theorem_B}, a more concrete description of the mating is not given (unlike the Schwarz function description given in Theorem \ref{main_theorem}).

We also remark that a notion of conformal matings between Kleinian groups and polynomials as holomorphic correspondences was introduced in \cite{Bullett1994} (see also \cite{BL20}). The definition of conformal mating used in the present work follows more closely that of \cite{LLMM1}. In particular, the map $p_c\sqcup\Gamma$ as in Theorems \ref{main_theorem}, \ref{Theorem_B} is a single-valued holomorphic function, rather than a multi-valued correspondence as in \cite{Bullett1994}. There is, however, a natural extension of the Schwarz function of a simply connected domain as a correspondence on $\hat{\mathbb{C}}$. Thus a natural question is whether this correspondence is a mating of $p_c$, $\Gamma$ in the sense of \cite{Bullett1994}, some branch of which is the Schwarz function of Theorem \ref{main_theorem}. If the answer is affirmative, this would provide a satisfying link between the two points of view. 

We now briefly outline the paper. In Sections \ref{dyn_Sch_func} and \ref{hyp_dyn_section} we introduce the class of dynamical systems $\mathcal{R}$ and establish some basic properties. In Section \ref{mod_group_struct} we study the group $\Gamma$, leading to a definition of conformal mating in Section \ref{conf_mating_def_sec}. In Section \ref{topology} we study the topology of the filled Julia set of a base element in $\mathcal{R}$. Section \ref{qc_surgery_sec} is devoted to quasiconformal deformations of this base element of $\mathcal{R}$ and the proof of Theorem \ref{main_theorem}. Section \ref{sim_unif_sec} is devoted to Theorem \ref{Theorem_B}, and lastly in Section \ref{remarks_section} we remark on some questions naturally arising from the present work.



\section{Dynamics of Schwarz Functions}\label{dyn_Sch_func}


\begin{notation} We use the notation: 

\begin{align} \nonumber \textrm{Pol}_d:= \left\{ p(z):=a_dz^d+...+a_1z+a_0 : a_d, ..., a_0 \in \mathbb{C}\textrm{, and } a_d\not=0 \right\}, \hspace{2mm} \textrm{Pol} := \bigcup_d \textrm{Pol}_d\textrm{, and} \nonumber\\ \textrm{Rat}_{d} := \left\{ \frac{p(z)}{q(z)} : p, q\in\textrm{Pol} \textrm{ and } \textrm{max}(\textrm{deg}(p), \textrm{deg}(q)) = d \right\}. \phantom{asdfasasddfs}  \nonumber \end{align}

\end{notation}

\begin{notation} Given three points $z_1$, $z_2$, $z_3 \in \hat{\mathbb{C}}$, we denote by $D_{z_1, z_2, z_3}$ the open Euclidean disc such that $z_1$, $z_2$, $z_3 \in \partial D_{z_1, z_2, z_3}$ and $(z_1, z_2, z_3)$ is oriented positively with respect to $D_{z_1, z_2, z_3}$.  Given a Euclidean disc $D$, we denote $D^*:=\hat{\mathbb{C}}\setminus\overline{D}$.
\end{notation}


\begin{definition}\label{main_family_larger} We define a collection $\mathcal{R}$ whose elements are pairs $(f, (c_1, c_2, c_3))$, where $f\in\textrm{Rat}_{3}$ is such that $f'$ has only simple zeros, and $(c_1, c_2, c_3)$ are critical points of $f$ such that $f$ is univalent (injective) on $\overline{D_{c_1, c_2, c_3}}$.
\end{definition}

\begin{rem} We will frequently omit the triple $(c_1, c_2, c_3)$ from our notation and simply write $f\in\mathcal{R}$. Given a choice of $c_1$, $c_2$, $c_3$ as in  Definition \ref{main_family_larger}, we denote by $c_f$ the remaining critical point of $f$.
\end{rem}

\begin{definition}\label{sigma_definition_larger} Let $(f, (c_1, c_2, c_3))\in\mathcal{R}$, and $D=D_{c_1, c_2, c_3}$. Let $M$ denote the M\"obius transformation determined by: \begin{enumerate} \item $M(c_1)=c_1$, \item $M(c_2)=c_3$, \item $M(c_3)=c_2$. \end{enumerate} We define $\sigma_f: f(D) \rightarrow \hat{\mathbb{C}}$ by the following diagram:

\[
  \begin{tikzcd}
    D \arrow{r}{M} & D^* \arrow{d}{f} \\
     f(D ) \arrow{r}{\sigma_f} \arrow{u}{f^{-1}}  & \widehat{\mathbb{C}}
  \end{tikzcd}
\]
\end{definition} 

\begin{rem} The definition of $\sigma_f$ depends not only on $f\in\mathcal{R}$, but also a choice (and ordering) of the critical points $c_1$, $c_2$, $c_3$.
\end{rem}

\begin{rem} When $f$ is $\mathbb{R}$-symmetric, and $c_1\in\mathbb{R}$ with $c_2=\overline{c_3}$, the map $\sigma_f$ is the Schwarz function of the quadrature domain $f(D)$. Thus the family $\mathcal{R}$ contains the Schwarz functions of simply connected $\mathbb{R}$-symmetric quadrature domains.
\end{rem}


\begin{definition}\label{tile_escaping} Let $f\in\mathcal{R}$. We define the \emph{fundamental tile} of $f$ by \[ T_f:=\hat{\mathbb{C}}\setminus \left( f(D) \cup \left\{f(c_1), f(c_2), f(c_3) \right\} \right). \]

\noindent The \emph{escaping set} of $\sigma_f$ is defined by: \[ I(\sigma_f):= \{ z\in\hat{\mathbb{C}} : \sigma_f^n(z)\in T_f \textrm{ for some } n\geq0 \}. \]
\end{definition}

\begin{prop}\label{first_covering_property} Let $f\in\mathcal{R}$, and denote $U:=f(D) \setminus \overline{\sigma_f^{-1}(T_f)}$. Then \begin{equation}\nonumber \sigma_f: \sigma_f^{-1}(\emph{int}\hspace{.5mm}T_f) \rightarrow \emph{int}\hspace{.5mm}T_f \end{equation} is a degree 3 covering map, and \begin{equation}\nonumber \sigma_f: U \rightarrow f(D)  \end{equation} is a degree 2 branched covering map, branched only at the point $f\circ M(c_f)$. 
\end{prop}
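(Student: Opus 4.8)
The plan is to read all of the covering data off of the single commutative diagram defining $\sigma_f$. Writing $\Phi:=M\circ f^{-1}$, the diagram gives $\sigma_f=f\circ\Phi$ on $f(D)$. First I would record that $M$ is a conformal involution interchanging $D$ and $D^*$: as a M\"obius map preserving the circle $\partial D$ it either preserves or swaps the two complementary discs according to whether it preserves or reverses the cyclic order of $(c_1,c_2,c_3)$, and since $M$ reverses this order it swaps $D$ and $D^*$; moreover $M$ permutes $\{c_1,c_2,c_3\}$ and $M^2$ fixes these three points, so $M^2=\mathrm{id}$. Consequently $\Phi\colon f(D)\to D^*$ is a conformal bijection, and every covering statement about $\sigma_f$ is equivalent, after the change of coordinate $\Phi$, to the corresponding statement about the degree-$3$ map $f\colon D^*\to\hat{\mathbb C}$. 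This reduces the proposition to understanding how $f$ covers $\hat{\mathbb C}$ from the exterior disc $D^*$.

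Next I would carry out the fibre count using univalence of $f$ on $\overline D$. Since $f$ is injective on $\overline D$ we have $f(\overline D)=\overline{f(D)}$ and $f(\partial D)=\partial f(D)=:\gamma$, so $\interior T_f=\hat{\mathbb C}\setminus\overline{f(D)}$. For $p\in\interior T_f$ none of the three $f$-preimages of $p$ can lie in $\overline D$, hence all three lie in $D^*$; for $p\in f(D)$ exactly one preimage lies in $D$ and none on $\partial D$ (as $\gamma\cap f(D)=\emptyset$), so exactly two lie in $D^*$. Transporting these counts by $\Phi$ shows that $\sigma_f$ is $3$-to-$1$ over $\interior T_f$ and $2$-to-$1$ over $f(D)$, which are the two asserted degrees. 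To locate the branching I would then compute the critical points of $\sigma_f=f\circ\Phi$ in $f(D)$: these occur precisely where $\Phi(z)$ is a critical point of $f$. Univalence of $f$ on $\overline D$ forces its only critical points there to be the boundary points $c_1,c_2,c_3$, so the fourth critical point $c_f$ lies in $D^*$; the values $M(c_i)\in\partial D$ contribute only boundary critical points (the three cusps of $\gamma$), while $\Phi(z)=c_f$ yields the unique \emph{interior} critical point $z_0=f\circ M(c_f)\in f(D)$, with $\sigma_f(z_0)=f(c_f)$.

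The main obstacle is to show that this critical value satisfies $f(c_f)\in f(D)$ rather than $f(c_f)\in\interior T_f$. This is exactly what makes the statement split as claimed: if $f(c_f)\in f(D)$ then $\interior T_f$ contains no critical value of $f$, so the degree-$3$ map $\sigma_f\colon\sigma_f^{-1}(\interior T_f)\to\interior T_f$ is an honest (unramified) covering, whereas the single ramification is carried by the degree-$2$ map, forcing $z_0=f\circ M(c_f)\in U$ to be its unique branch point. I expect a purely topological argument (Riemann--Hurwitz / Euler characteristic of the disc $\overline{D^*}$) to be insufficient here, since both possible locations of $f(c_f)$ are compatible with $\overline{D^*}$ being a disc. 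Instead I would argue analytically, using the boundary behaviour of $\sigma_f$ on $\gamma$: since $M|_{\partial D}$ is an orientation-reversing involution of $\partial D$, the map $\sigma_f|_\gamma$ is an orientation-reversing homeomorphism of $\gamma$, so a one-sided neighbourhood of $\gamma$ inside $f(D)$ maps across $\gamma$ into $\interior T_f$; tracking the two $D^*$-sheets lying over $f(D)$ then shows they must coalesce at $c_f$, which places $f(c_f)$ inside $f(D)$.

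Once $f(c_f)\in f(D)$ is established, the remaining work is routine. The set $U:=f(D)\setminus\overline{\sigma_f^{-1}(T_f)}$ isolates exactly the locus mapping into $f(D)$, i.e.\ the degree-$2$ piece, and the restrictions $\sigma_f\colon\sigma_f^{-1}(\interior T_f)\to\interior T_f$ and $\sigma_f\colon U\to f(D)$ are proper. Properness together with the fibre counts upgrades the first restriction to an unramified degree-$3$ covering map, and the second to a degree-$2$ branched covering whose only branch point is $z_0=f\circ M(c_f)$, completing the proof.
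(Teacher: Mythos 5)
Your proposal follows essentially the same route as the paper's proof: both read the degree counts off the diagram defining $\sigma_f$ (using that $f$ is globally $3:1$ and univalent on $\overline{D}$), both place $c_f$ in $D^*$ and hence locate the unique interior critical point of $\sigma_f$ at $f\circ M(c_f)$, and both reduce the branching statement to the single claim $f(c_f)\in f(D)$. Note, however, that at that final claim you are no more complete than the paper: where the paper writes only that ``this follows from covering properties of $f$,'' your assertion that the two $D^*$-sheets over $f(D)$ ``must coalesce at $c_f$'' is just a restatement of $f(c_f)\in f(D)$ (equivalently, connectedness of the degree-two part), not an argument for it --- and your own correct observation that Riemann--Hurwitz/Euler-characteristic counting cannot decide between the two possible locations of $f(c_f)$ shows that a genuine additional argument is still owed at exactly that point.
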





\begin{proof} Note that $f:\hat{\mathbb{C}}\rightarrow\hat{\mathbb{C}}$ is $3:1$, and $f:D\rightarrow f(D)$ is $1:1$. Thus the degree statements follow from the diagram of Definition \ref{sigma_definition_larger}. Note that $c_f\not\in\partial D_{c_1, c_2, c_3}$, since otherwise, $f$ would be univalent in $D^*$ which is impossible. Thus $c_f\in D^*$, and so $\sigma_f$ is branched at $f\circ M(c_f)$. It remains to show that $f\circ M(c_f) \in U$, for which it suffices to show that $f(c_f) \in f(D)$, and this follows from covering properties of $f$.

\end{proof}


\begin{definition}\label{Julia_Fatou} Let $f\in\mathcal{R}$. We define the \emph{filled Julia set} of $\sigma_f$ by: \[ K(\sigma_f):=\hat{\mathbb{C}}\setminus I(\sigma_f). \] The \emph{Julia set} and \emph{Fatou set} of $\sigma_f$ are defined by \[\mathcal{J}(\sigma_f):=\partial K(\sigma_f)\textrm{, } \hspace{3mm} \mathcal{F}(\sigma_f):=\textrm{int}\hspace{.5mm}K(\sigma_f) \] respectively.

\end{definition}


\begin{rem} We will sometimes omit the dependence on $f$ from our notation when $f$ is clear from the context, and write, for instance, $T$ in place of $T_f$, or $\sigma$ in place of $\sigma_f$.
\end{rem}

\begin{prop}\label{openess_of_escaping_set} Let $f\in\mathcal{R}$. Then $I(\sigma_f)$ is open. \end{prop}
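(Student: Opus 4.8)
The plan is to show that every point of $I(\sigma_f)$ is interior to it. Since $\sigma_f$ is meromorphic, hence continuous and open, on the open set $f(D)$, propagating interiority along orbits is routine; the one genuine subtlety is that the target set $T_f$ is \emph{not} open, as it contains the boundary arc $\partial f(D)\setminus\{f(c_1),f(c_2),f(c_3)\}$ (indeed $\operatorname{int}T_f=\hat{\mathbb{C}}\setminus\overline{f(D)}$). So the heart of the argument is a local statement at $\partial f(D)$, which I would isolate as the following Key step.

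\emph{Key step: boundary points escape in one iterate.} I claim $T_f\subseteq\operatorname{int}I(\sigma_f)$; equivalently, every $w_0\in\partial f(D)\setminus\{f(c_1),f(c_2),f(c_3)\}$ has a neighborhood contained in $I(\sigma_f)$. Points of $\operatorname{int}T_f$ near $w_0$ already lie in $T_f$, so only the points of $f(D)$ near $w_0$ need attention, and for these I would trace Definition \ref{sigma_definition_larger}. Writing $\zeta_0:=f^{-1}(w_0)\in\partial D$, the map $M$ is the involution fixing $c_1$ and exchanging $c_2,c_3$, hence orientation-reversing on $\partial D$, so it interchanges $D$ and $D^*$; thus $M$ carries points of $D$ near $\zeta_0$ into $D^*$ near $M(\zeta_0)\in\partial D$. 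Since $w_0\neq f(c_i)$ we have $\zeta_0\notin\{c_1,c_2,c_3\}$, and because $c_f\in D^*$ (as in the proof of Proposition \ref{first_covering_property}) the point $M(\zeta_0)$ is not a critical point of $f$. Hence $f$ is a local orientation-preserving homeomorphism at $M(\zeta_0)$ carrying the $D^*$-side to the exterior of $f(D)$. Composing, $\sigma_f=f\circ M\circ f^{-1}$ sends a one-sided neighborhood of $w_0$ inside $f(D)$ into $\operatorname{int}T_f$, so those points escape immediately; together with $\operatorname{int}T_f\subseteq T_f$ this gives a full neighborhood of $w_0$ inside $I(\sigma_f)$.

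\emph{Propagation.} Granting the Key step the rest is soft. Given $z_0\in I(\sigma_f)$, let $n$ be minimal with $\sigma_f^n(z_0)\in T_f$. Minimality, together with the requirement that $\sigma_f^n(z_0)$ be defined, forces $\sigma_f^k(z_0)\in f(D)$ for $0\le k<n$: if some intermediate iterate lay in $T_f$ it would contradict minimality, and if it equalled $f(c_i)$ the orbit would be undefined at the next step. Consequently $\sigma_f^n$ is a composition of maps each holomorphic on an open set around the corresponding orbit point, hence holomorphic near $z_0$. Since $\sigma_f^n(z_0)\in T_f\subseteq\operatorname{int}I(\sigma_f)$, continuity yields a neighborhood $N$ of $z_0$ with $\sigma_f^n(N)\subseteq\operatorname{int}I(\sigma_f)$; shrinking $N$ keeps $\sigma_f^k(N)\subseteq f(D)$ for $k<n$ (as $f(D)$ is open), so all iterates on $N$ are defined. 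Because $\sigma_f^n(z')\in I(\sigma_f)$ with intermediate orbit defined implies $z'\in I(\sigma_f)$, we conclude $N\subseteq I(\sigma_f)$, so $z_0\in\operatorname{int}I(\sigma_f)$.

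I expect the Key step to be the main obstacle: it is precisely the assertion that $\partial f(D)$, though lying in the closed set $T_f$, lies in the \emph{interior} of the escaping set. Everything there hinges on the orientation bookkeeping — that $M$ interchanges the two sides of $\partial D$ and that $f$, being univalent on $\overline{D}$ with $c_f\in D^*$, transports sides across $\partial D$ at a noncritical point — and on the exclusion of the three distinguished critical values $f(c_i)$, near which the one-step escape picture degenerates. I would verify this local analysis carefully, as it is the only nontrivial input to the proof.
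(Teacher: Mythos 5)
Your proposal is correct and follows essentially the same route as the paper: the paper's proof also reduces to showing that points of $f(\partial D)\setminus\{f(c_1),f(c_2),f(c_3)\}$ have neighborhoods whose $f(D)$-side is mapped by $\sigma_f$ into $\hat{\mathbb{C}}\setminus\overline{f(D)}$ (your Key step, there phrased via $\sigma_f$ preserving $f(\partial D)$ setwise), and then propagates to preimages along orbits. Your write-up merely makes explicit the orientation bookkeeping for $M$ and the pull-back step that the paper compresses into ``similar reasoning.''
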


\begin{proof} We will show that $I(\sigma)\subset \textrm{int}\hspace{.5mm}I(\sigma)$. Let $z\in I(\sigma)$. Suppose first that $z\in T$. If $z\not\in f(\partial D)$, it is clear that $z\in\textrm{int}\hspace{.5mm}T \subset \textrm{int}\hspace{.5mm}I(\sigma)$, so we assume that $z\in f(\partial D)$. Since $f(\partial D)\setminus\{f(c_1), f(c_2), f(c_3) \}$  is preserved set-wise by $\sigma$, we have $\sigma(z)\in f(\partial D)\setminus\{f(c_1), f(c_2), f(c_3) \}$. Thus for a small neighborhood $U$ of $z$, we have that \[\sigma(U\cap f(D))\subset\hat{\mathbb{C}}\setminus \overline{f(D)}.\] Thus  $U\cap f(D) \subset I(\sigma)$, and so $z\in \textrm{int}\hspace{.5mm}I(\sigma)$. Similar reasoning shows that if $z\in \sigma^{-n}(I(\sigma))$ for some $n>0$, then $z\in I(\sigma)$.
\end{proof}

\begin{rem} Proposition \ref{openess_of_escaping_set} indicates the importance of the requirement that the M\"obius transformation $M$ of Definition \ref{sigma_definition_larger} preserves (set-wise) the critical points $c_1$, $c_2$, $c_3$. Without this requirement, one would have, for instance, isolated points in the Julia set (generically).

\end{rem}

\begin{prop}\label{compactness_of_K} Let $f\in\mathcal{R}$. Then $K(\sigma_f)$ is compact, full, and \begin{equation}\label{filled_julia_decomposition} K(\sigma_f) = \mathcal{J}(\sigma_f) \sqcup \mathcal{F}(\sigma_f). \end{equation}
\end{prop}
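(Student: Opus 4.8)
The compactness and the decomposition \eqref{filled_julia_decomposition} are essentially formal consequences of Proposition \ref{openess_of_escaping_set}, while fullness is the substantive part. Since $I(\sigma_f)$ is open by Proposition \ref{openess_of_escaping_set}, the set $K(\sigma_f)=\hat{\mathbb{C}}\setminus I(\sigma_f)$ is closed in the compact space $\hat{\mathbb{C}}$, hence compact. For \eqref{filled_julia_decomposition} I would apply the general identity $\overline{A}=\textrm{int}\,A\sqcup\partial A$ to the closed set $A=K(\sigma_f)$, which yields $K(\sigma_f)=\textrm{int}\,K(\sigma_f)\sqcup\partial K(\sigma_f)=\mathcal{F}(\sigma_f)\sqcup\mathcal{J}(\sigma_f)$ directly from Definition \ref{Julia_Fatou}.

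The remaining assertion, that $K(\sigma_f)$ is \emph{full}, I interpret as the statement that its complement $I(\sigma_f)$ is connected. The plan is to write $I(\sigma_f)$ as an increasing union of connected sets. First I would verify that $T_f$ is connected: its interior $\hat{\mathbb{C}}\setminus\overline{f(D)}$ is a Jordan domain, and $T_f$ is obtained from it by adjoining the three boundary arcs comprising $f(\partial D)\setminus\{f(c_1),f(c_2),f(c_3)\}$, each contained in the closure of that domain. Setting $T_n:=\bigcup_{k=0}^{n}\sigma_f^{-k}(T_f)$, one has $T_0=T_f$, the relation $T_n=T_{n-1}\cup\sigma_f^{-1}(T_{n-1})$, and $I(\sigma_f)=\bigcup_{n\geq0}T_n$ directly from Definition \ref{tile_escaping}. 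Since the $T_n$ are nested, it suffices to prove by induction that each $T_n$ is connected.

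The inductive step, which I expect to be the main obstacle, is to deduce connectivity of $T_n=T_{n-1}\cup\sigma_f^{-1}(T_{n-1})$ from connectivity of $T_{n-1}$. Because $\sigma_f$ is a branched covering of degrees $2$ and $3$ (Proposition \ref{first_covering_property}), the preimage of the connected set $T_{n-1}$ generally splits into several components; over the simply connected set $\textrm{int}\,T_f$, for instance, the covering $\sigma_f\colon\sigma_f^{-1}(\textrm{int}\,T_f)\to\textrm{int}\,T_f$ is trivial and splits into three sheets. One must therefore show that each component $B$ of $\sigma_f^{-1}(T_{n-1})$ attaches to $T_{n-1}$, in the sense that $\overline{B}\cap T_{n-1}\neq\emptyset$; granting this, each $T_{n-1}\cup B$ is connected and all share $T_{n-1}$, so $T_n$ is connected. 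An additional subtlety is that the components $B$ lie in $f(D)$ whereas $T_f$ is disjoint from $f(D)$, so contact can occur only along $f(\partial D)$ or along the preimages $\sigma_f^{-k}(f(\partial D))$; hence one genuinely needs the closure criterion $\overline{B}\cap T_{n-1}\neq\emptyset$ rather than $B\cap T_{n-1}\neq\emptyset$. To establish the contact I would use the two facts already exploited in Proposition \ref{openess_of_escaping_set}: that $f(\partial D)\setminus\{f(c_1),f(c_2),f(c_3)\}$ is preserved set-wise by $\sigma_f$, and that the critical values lie in the common boundary of the various tiles. These allow one to track the arcs of $\sigma_f^{-k}(f(\partial D))$ along which adjacent tiles are glued, and to verify that every component of $\sigma_f^{-1}(T_{n-1})$ shares such a gluing arc with $T_{n-1}$.

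The delicate point throughout is to ensure that this gluing occurs at points lying in $I(\sigma_f)$ — that is, away from the three critical values $f(c_1),f(c_2),f(c_3)$, which belong to $K(\sigma_f)$ and hence cannot serve to connect components of the escaping set. Granting the inductive step, $I(\sigma_f)=\bigcup_{n\geq0}T_n$ is connected as a nested union of connected sets, which is precisely the fullness of $K(\sigma_f)$.
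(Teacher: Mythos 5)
Your proposal is correct and follows essentially the same route as the paper: compactness and the decomposition are formal consequences of the openness of $I(\sigma_f)$, and fullness is proved by writing $I(\sigma_f)$ as a nested union of the sets $\bigcup_{k=0}^{n}\sigma_f^{-k}(T_f)$ and gluing the components of each new preimage to the previous stage along the arcs of $f(\partial D)\setminus\{f(c_1),f(c_2),f(c_3)\}$, whose set-wise invariance under $\sigma_f$ is exactly the fact the paper invokes. Your discussion of the inductive step is, if anything, more explicit than the paper's.
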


\begin{proof} That $K(\sigma)$ is closed follows from Proposition \ref{openess_of_escaping_set}. Thus $K(\sigma)$ is compact with respect to the spherical metric on $\hat{\mathbb{C}}$. Relation (\ref{filled_julia_decomposition}) follows. Now observe once more that that $f(\partial D)\setminus\{f(c_1), f(c_2), f(c_3) \}$ is preserved set-wise by $\sigma$. Thus each of the three components of $\sigma^{-1}(T)$ have non-empty intersection with $T$, and so $\sigma^{-1}(T)\cup T$ is connected. Similar reasoning shows inductively that \[ \bigcup_{n=0}^k \sigma^{-n}(T) \] is connected for each $k\geq0$. Thus connectivity of $I(\sigma)$ follows, and so $K(\sigma)$ is full.


\end{proof}

\begin{prop}\label{invariance} Let $f\in\mathcal{R}$. Then $\mathcal{J}(\sigma_f)$ and $\mathcal{F}(\sigma_f)$ are both totally invariant under $\sigma_f$.
\end{prop}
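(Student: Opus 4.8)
The plan is to deduce total invariance of $\mathcal{J}(\sigma_f)$ and $\mathcal{F}(\sigma_f)$ from the complete invariance of the escaping set $I(\sigma_f)$, combined with the fact that $\sigma_f$ is an open map. Throughout I would keep in mind that $\sigma_f$ is defined only on $f(D)$, so the appropriate reading of ``totally invariant'' for a set $A$ is $\sigma_f^{-1}(A) = A\cap f(D)$ together with $\sigma_f(A\cap f(D))\subseteq A$.

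First I would record the complete invariance of $I(\sigma_f)$. Since $T_f$ is disjoint from $f(D)$, any $z\in f(D)\cap I(\sigma_f)$ satisfies $\sigma_f^n(z)\in T_f$ for some $n\geq 1$, whence $\sigma_f(z)\in I(\sigma_f)$; conversely $\sigma_f(z)\in I(\sigma_f)$ immediately forces $z\in I(\sigma_f)$. This gives $\sigma_f^{-1}(I(\sigma_f))=I(\sigma_f)\cap f(D)$, and on passing to complements $\sigma_f^{-1}(K(\sigma_f))=K(\sigma_f)\cap f(D)$ together with the forward inclusion $\sigma_f(K(\sigma_f)\cap f(D))\subseteq K(\sigma_f)$. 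Thus $K(\sigma_f)$ is completely invariant in the partial-map sense. Note also that, since $T_f\subseteq I(\sigma_f)$, we have $K(\sigma_f)\subseteq f(D)\cup\{f(c_1),f(c_2),f(c_3)\}$.

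Next I would pass to $\mathcal{F}(\sigma_f)=\textrm{int}\hspace{.5mm}K(\sigma_f)$ and $\mathcal{J}(\sigma_f)=\partial K(\sigma_f)$ using that $\sigma_f$, being a nonconstant meromorphic map (indeed a branched covering by Proposition \ref{first_covering_property}), is continuous and open. I first observe that $\mathcal{F}(\sigma_f)\subseteq f(D)$: each $f(c_i)$ lies on $\partial f(D)$, and every neighborhood of it meets $T_f\subseteq I(\sigma_f)$, so no $f(c_i)$ is interior to $K(\sigma_f)$; combined with the inclusion for $K(\sigma_f)$ above this forces its interior into $f(D)$, so $\sigma_f$ is defined on all of $\mathcal{F}(\sigma_f)$. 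Now for $z\in\mathcal{F}(\sigma_f)$, choosing an open $V\subseteq K(\sigma_f)\cap f(D)$ containing $z$, openness of $\sigma_f$ makes $\sigma_f(V)$ an open subset of $K(\sigma_f)$, so $\sigma_f(z)\in\mathcal{F}(\sigma_f)$; conversely if $\sigma_f(z)\in\mathcal{F}(\sigma_f)$, continuity yields an open neighborhood of $z$ lying in $\sigma_f^{-1}(K(\sigma_f))\subseteq K(\sigma_f)$, giving $z\in\mathcal{F}(\sigma_f)$. This establishes $\sigma_f^{-1}(\mathcal{F}(\sigma_f))=\mathcal{F}(\sigma_f)$ along with forward invariance. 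Finally, by Proposition \ref{compactness_of_K} we have $\mathcal{J}(\sigma_f)=K(\sigma_f)\setminus\mathcal{F}(\sigma_f)$, and intersecting the invariance relations for $K(\sigma_f)$ and $\mathcal{F}(\sigma_f)$ gives $\sigma_f^{-1}(\mathcal{J}(\sigma_f))=\mathcal{J}(\sigma_f)\cap f(D)$; forward invariance follows since for $z\in\mathcal{J}(\sigma_f)\cap f(D)$ one has $\sigma_f(z)\in K(\sigma_f)$, and $\sigma_f(z)\in\mathcal{F}(\sigma_f)$ would force $z\in\mathcal{F}(\sigma_f)$ by backward invariance of the Fatou set, a contradiction.

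The only genuinely delicate point is the interplay between the open-mapping argument and the bookkeeping forced by $\sigma_f$ being defined only on $f(D)$; once one verifies $\mathcal{F}(\sigma_f)\subseteq f(D)$, so that $\sigma_f$ acts on the entire Fatou set, the remaining assertions are formal consequences of complete invariance of $K(\sigma_f)$ together with continuity and openness of $\sigma_f$. I expect that checking $\mathcal{F}(\sigma_f)\subseteq f(D)$ and correctly treating the three exceptional boundary points $f(c_1),f(c_2),f(c_3)$ will require the most care.
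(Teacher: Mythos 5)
Your proof is correct and follows essentially the same route as the paper: total invariance of $I(\sigma_f)$ (hence of $K(\sigma_f)$), openness of $\sigma_f$ to handle $\mathcal{F}(\sigma_f)=\mathrm{int}\,K(\sigma_f)$, and then $\mathcal{J}(\sigma_f)=K(\sigma_f)\setminus\mathcal{F}(\sigma_f)$ to conclude. The paper's version is only two sentences and elides the partial-map bookkeeping (in particular the check that $\mathcal{F}(\sigma_f)\subseteq f(D)$), which you carry out carefully; this is a fleshing-out rather than a different argument.
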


\begin{proof} That $\mathcal{F}(\sigma)$ is totally invariant follows from the observation that $\sigma$ is an open mapping. As \[\sigma(\mathcal{J}(\sigma))\subset K(\sigma),\] total invariance of $\mathcal{J}(\sigma)$ now follows from total invariance of $\mathcal{F}(\sigma)$.
\end{proof}

\begin{cor}\label{normality} Let $f\in\mathcal{R}$. Then the family $(\sigma_f^{n})_{n=1}^\infty$ is normal in $\mathcal{F}(\sigma_f)$. 
\end{cor}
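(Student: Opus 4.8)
The plan is to deduce this directly from Montel's theorem, using the two facts already in hand: the total invariance of $\mathcal{F}(\sigma_f)$ established in Proposition \ref{invariance}, and the openness of the escaping set established in Proposition \ref{openess_of_escaping_set}. The content of the corollary is entirely formal once these are available, so the work is really just assembling them correctly.

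First I would check that the iterates $\sigma_f^n$ are well-defined as meromorphic functions on all of $\mathcal{F}(\sigma_f)$. By Definition \ref{sigma_definition_larger} the map $\sigma_f=f\circ M\circ f^{-1}$ is meromorphic on $f(D)$, and since $\mathcal{F}(\sigma_f)$ is (in particular forward) invariant by Proposition \ref{invariance}, each composition $\sigma_f^n$ is defined and meromorphic on $\mathcal{F}(\sigma_f)$, with $\sigma_f^n(\mathcal{F}(\sigma_f))\subset\mathcal{F}(\sigma_f)\subset K(\sigma_f)$ for every $n\geq1$.

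Next I would exhibit three fixed omitted values. Recall that $f$ is univalent on $\overline{D}$, so $f(\overline D)=\overline{f(D)}$ is a compact proper subset of $\hat{\mathbb{C}}$, whence $\hat{\mathbb{C}}\setminus\overline{f(D)}$ is nonempty and open. Since this set is contained in $T_f=\hat{\mathbb{C}}\setminus\bigl(f(D)\cup\{f(c_1),f(c_2),f(c_3)\}\bigr)$, we have $\operatorname{int}T_f\neq\emptyset$, and $\operatorname{int}T_f\subset I(\sigma_f)$. By Proposition \ref{openess_of_escaping_set} the set $I(\sigma_f)=\hat{\mathbb{C}}\setminus K(\sigma_f)$ is open and nonempty, hence contains three distinct points $w_1,w_2,w_3$.

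Finally, because every $\sigma_f^n$ maps $\mathcal{F}(\sigma_f)$ into $K(\sigma_f)$, none of the $w_i$ is attained by any $\sigma_f^n$ on $\mathcal{F}(\sigma_f)$; that is, the family $(\sigma_f^n)_{n=1}^\infty$ omits the three fixed values $w_1,w_2,w_3$ throughout $\mathcal{F}(\sigma_f)$. Montel's theorem then gives normality on $\mathcal{F}(\sigma_f)$. The only point requiring attention is the global definability of the iterates on $\mathcal{F}(\sigma_f)$; there is no genuine obstacle beyond this, as the substantive facts were already proved in Propositions \ref{openess_of_escaping_set} and \ref{invariance}.
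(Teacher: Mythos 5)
Your proof is correct and follows essentially the same route as the paper: both arguments note that all iterates of $\sigma_f$ on $\mathcal{F}(\sigma_f)$ take values in a fixed set whose complement contains a nonempty open set (the paper uses $\hat{\mathbb{C}}\setminus\overline{f(D)}$, you use $I(\sigma_f)$, which contains it), and then invoke Montel's theorem. The only difference is the choice of omitted open set, which is immaterial.
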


\begin{proof} Given $z\in\mathcal{F}(\sigma)$, all iterates of $\sigma^n$ are defined in a sufficiently small neighborhood $U$ of $z$ by definition of $\mathcal{F}(\sigma)$. As $\sigma^n(U)\subset f(D)$ for all $n$, and $\hat{\mathbb{C}}\setminus \overline{f(D)}$ is open, the result follows from elementary criteria for normality. 
\end{proof}

\begin{prop}\label{conn_locus} Let $f\in\mathcal{R}$. Then $K(\sigma_f)$ is connected if and only if $c_f\not\in I(\sigma_f)$. 
\end{prop}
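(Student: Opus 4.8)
The plan is to mirror the classical dichotomy for polynomial Julia sets---connected filled Julia set if and only if the critical orbit does not escape---by analyzing the pullbacks of the fundamental tile. The key structural input is Proposition \ref{first_covering_property}: over $f(D)$ the map $\sigma_f$ is a degree $2$ branched covering, ramified at the single point $z_0:=f\circ M(c_f)$ with critical value $\sigma_f(z_0)=f(c_f)$ (using that $M$ is an involution interchanging $D$ and $D^*$), while over $\textrm{int}\,T_f$ it is an unbranched degree $3$ covering. I would first record that, by the total invariance of $K(\sigma_f)$ and $I(\sigma_f)$ (Proposition \ref{invariance}) together with the relation $\sigma_f(z_0)=f(c_f)$, the conditions $c_f\in I(\sigma_f)$, $z_0\in I(\sigma_f)$, and $f(c_f)\in I(\sigma_f)$ are all equivalent; thus it suffices to decide connectivity of $K(\sigma_f)$ according to whether the critical value $f(c_f)$ escapes.

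Next I would set up the exhaustion $V_n:=\hat{\mathbb{C}}\setminus\bigcup_{k=0}^n \sigma_f^{-k}(T_f)$. These are compact, nested, and satisfy $\bigcap_n V_n=K(\sigma_f)$ by Definition \ref{tile_escaping} and Proposition \ref{compactness_of_K}. Unwinding the definition gives the recursion $V_{n+1}=\overline{f(D)}\cap\sigma_f^{-1}(V_n)$, so that each $V_{n+1}$ is obtained from $V_n$ by the degree $2$ branched pullback described above (the three tile-preimages meet $\partial f(D)$ as boundary bites, so that $V_0=\overline{f(D)}$ is a closed Jordan disk). The heart of the argument is the inductive claim: as long as $f(c_f)\in\textrm{int}\,V_n$, each $V_n$ is a closed Jordan disk. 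Indeed, a degree $2$ cover of a disk ramified at one interior point is again a disk (Riemann--Hurwitz gives Euler characteristic $2\cdot 1-1=1$, and connectivity follows from the presence of the branch point), so the induction propagates. Consequently, if $f(c_f)$ never escapes---that is, $f(c_f)\in K(\sigma_f)$---then every $V_n$ is connected, and $K(\sigma_f)=\bigcap_n V_n$ is a nested intersection of connected compacta, hence connected.

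For the converse, suppose $f(c_f)$ escapes, entering $\bigcup_{k\le N}\sigma_f^{-k}(T_f)$ for some least $N$. Then $V_N$ is still a Jordan disk but $f(c_f)\notin V_N$, so the pullback $V_{N+1}=\sigma_f^{-1}(V_N)$ is an \emph{unramified} degree $2$ cover of a disk, hence a disjoint union of two closed disks. Using total invariance of $K(\sigma_f)$ (Proposition \ref{invariance}), each of the two sheets maps homeomorphically onto $V_N\supset K(\sigma_f)$ and therefore meets $K(\sigma_f)$; since $K(\sigma_f)\subset V_{N+1}$ is thereby split by the two disjoint sheets, $K(\sigma_f)$ is disconnected. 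I expect the main obstacle to be the topological bookkeeping in the inductive step: verifying that the boundaries of the $V_n$ remain Jordan curves, that in the non-escaping case the critical value stays in the \emph{interior} (rather than on the boundary) of $V_n$, and the correct handling of the point at infinity and of the three tile-preimages attached to $\partial f(D)$. A secondary point needing care is the equivalence between escaping of $c_f$ and of the critical value $f(c_f)$, which I would pin down from the precise position of $c_f$ relative to $f(D)$ together with the invariance properties already established.
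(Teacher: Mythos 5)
Your proposal is correct and follows essentially the same route as the paper: a Riemann--Hurwitz/Euler-characteristic count on the nested pullbacks of the complement of the fundamental tile, with connectivity propagating exactly when the unique branch point of $\sigma_f\colon U\to f(D)$ remains in each successive pullback. You supply more detail than the paper on the disconnection direction and the nested-intersection argument; the one point to watch is that the equivalence between $c_f\in I(\sigma_f)$ and escape of the branch point $f\circ M(c_f)$ (equivalently of the critical value $f(c_f)$) does not follow from total invariance alone, since $c_f$ itself is not on the $\sigma_f$-orbit of either point --- though the paper glosses this same identification.
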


\begin{proof} Let $U:=f(D) \setminus \overline{\sigma_f^{-1}(T_f)}$ as in Proposition \ref{first_covering_property}. The map $\sigma_f: U \rightarrow f(D)$ is a degree 2 branched covering map by Proposition \ref{first_covering_property}. Letting $\chi$ denote the Euler Characteristic, we have then that, by the Riemann-Hurwitz formula, \[2\cdot\chi(f(D))-\chi(U) = \#\{\textrm{critical points of } \sigma_f \textrm{ in } U \}. \] As $\sigma_f$ has 1 critical point in $U$ by Proposition \ref{first_covering_property}, we see that $\chi(U)=1$, in other words $U$ is connected. More generally, we see that $2\cdot\chi(\sigma^{-n}(U))-\chi(\sigma^{-n-1}(U))$ is equal to the number of critical points of $\sigma_f$ in $\sigma^{-n-1}(U)$. Thus as $\chi(U)=1$, the conclusion follows.


\end{proof}

\begin{rem} Proposition \ref{conn_locus} defines a connectedness locus for the family $\mathcal{R}$. Figure \ref{fig:parameter_comparison} shows a zoom of the locus for a 1-dimensional subfamily of $\mathcal{R}$, in comparison with the connectedness locus for the quadratic family.
\end{rem}


\section{Hyperbolic Dynamics}\label{hyp_dyn_section}



 



\begin{definition} Let $f\in\mathcal{R}$, and suppose that $p\in f(D)$ is such that $\sigma_f(p)=p$. We will call $\lambda:=\sigma_f'(p)$ the \emph{multiplier} of $p$. If $|\lambda|<1$, we call $p$ \emph{attracting}, and define the \emph{basin of attraction} $\mathcal{A}=\mathcal{A}(p)$ to be the set of $x\in K(\sigma_f)$ such that $\lim_{n\rightarrow\infty}\sigma_f^n(x)$ exists and is equal to $p$. The \emph{immediate basin of attraction} $\mathcal{A}_0$ is defined to be the connected component of $\mathcal{A}$ containing $p$.
\end{definition}


\begin{thm}\label{Koenig_coord} {\bf (K\oe nig's Linearization)} Let $f\in\mathcal{R}$ be such that $\sigma_f$ has a fixpoint at $p$ of multiplier $\lambda$ where $0<|\lambda|<1$. Then there is a holomorphic map $\phi: \mathcal{A}\rightarrow\mathbb{C}$ with $\phi(p)=0$ such that the diagram \[
  \begin{tikzcd}
    \mathcal{A} \arrow{r}{\sigma_f} \arrow[swap]{d}{\phi}  & \mathcal{A} \arrow{d}{\phi} \\
     \mathbb{C} \arrow{r}{\lambda\cdot}  & \mathbb{C}
  \end{tikzcd}
\] commutes. Moreover, $\phi$ is conformal in a neighborhood of $0$. Let $\phi^{-1}$ denote the inverse of $\phi$ defined near $0$. There is a maximal open disc $\mathbb{D}_r$ centered at the origin to which $\phi^{-1}$ extends conformally. The map $\phi^{-1}$ extends homeomorphically to $\partial\mathbb{D}_r$, and $\phi^{-1}(\partial\mathbb{D}_r)$ contains a critical point of $\sigma_f$.
\end{thm}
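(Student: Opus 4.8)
The plan is to run the classical proof of Kœnig's theorem and then locate the obstruction to extending the inverse linearizer at a critical point of $\sigma_f$.

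\textbf{Local linearization and extension to the basin.} Since $\sigma_f$ is holomorphic near $p$ with $\sigma_f(p)=p$, $\sigma_f'(p)=\lambda$ and $0<|\lambda|<1$, I would first invoke the classical Kœnig construction: the limit $\phi(z):=\lim_{n\to\infty}\lambda^{-n}\left(\sigma_f^n(z)-p\right)$ converges locally uniformly near $p$ and yields a conformal germ with $\phi(p)=0$, $\phi'(p)=1$, and $\phi\circ\sigma_f=\lambda\phi$. To extend $\phi$ over all of $\mathcal{A}$, recall that every iterate $\sigma_f^n$ is defined and holomorphic on $\mathcal{A}\subset\mathcal{F}(\sigma_f)$ (this is the content underlying Corollary \ref{normality}), and that $\sigma_f^n(x)\to p$ for $x\in\mathcal{A}$. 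Hence I set $\phi(x):=\lambda^{-n}\phi(\sigma_f^n(x))$ for $n$ large enough that $\sigma_f^n(x)$ lies in the domain of the germ; the functional equation makes this independent of $n$ and holomorphic, producing the commuting diagram and the conformality near $0$.

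\textbf{Maximal univalent extension of $\psi=\phi^{-1}$ and the critical point.} Working in the immediate basin $\mathcal{A}_0$, I would extend the local inverse $\psi$ near $0$ to ever larger discs $\mathbb{D}_s$ via the relation $\sigma_f(\psi(w))=\psi(\lambda w)$: once $\psi$ is defined on $\mathbb{D}_s$ and $|w|$ slightly exceeds $s$, the point $\lambda w$ already lies in the domain, so $\psi(w)$ is forced to be the $\sigma_f$-preimage of $\psi(\lambda w)$ obtained by analytically continuing the local inverse branch of $\sigma_f$. Differentiating $\phi\circ\sigma_f=\lambda\phi$ gives $\phi'(\sigma_f(z))\,\sigma_f'(z)=\lambda\,\phi'(z)$, so $\phi'$ vanishes exactly at the critical points of $\sigma_f$; thus $\psi$ remains conformal precisely as long as its image avoids critical points. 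Let $r$ be the supremum of radii of univalent extension. Finiteness of $r$ holds because $\mathcal{A}_0$ is a bounded proper subset of the plane (as $K(\sigma_f)$ is compact and full by Proposition \ref{compactness_of_K}), so $\psi$ cannot be an entire univalent map. If no critical point lay on $\psi(\partial\mathbb{D}_r)$, the branch of $\sigma_f^{-1}$ could be continued across every boundary point, giving a univalent extension to some $\mathbb{D}_{r'}$ with $r'>r$ and contradicting maximality; hence $\psi(\partial\mathbb{D}_r)$ contains a critical point of $\sigma_f$.

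\textbf{The boundary homeomorphism.} The main obstacle is upgrading the conformal map $\psi:\mathbb{D}_r\to V:=\psi(\mathbb{D}_r)$ to a homeomorphism of $\overline{\mathbb{D}_r}$ onto $\overline{V}$. I would show $\partial V$ is a Jordan curve and then invoke Carathéodory's extension theorem. The self-similarity $\sigma_f(V)=\psi(\mathbb{D}_{|\lambda|r})$, combined with the degree-$2$ branched-covering structure of $\sigma_f$ near its single critical point $f\circ M(c_f)$ furnished by Proposition \ref{first_covering_property}, determines the local shape of $\partial V$: away from the critical point the boundary is locally an analytic arc pulled back by a nonsingular branch of $\sigma_f^{-1}$, while at the critical point the branching is modeled on $w\mapsto w^{1/2}$, yielding a continuous and injective boundary parametrization rather than a self-tangency that would break injectivity. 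Verifying local connectivity of $\partial V$ and injectivity on $\partial\mathbb{D}_r$ — in particular ruling out that the extended boundary touches itself — is the delicate step, and I expect it to require the compactness of $K(\sigma_f)$ together with an equicontinuity argument for the continuity of $\psi$ up to the boundary.
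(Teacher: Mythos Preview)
Your argument is a from-scratch reconstruction of classical K\oe nig theory, whereas the paper takes a much shorter route: it observes that $p\in\mathcal{F}(\sigma_f)$, that $\sigma_f:\mathcal{F}(\sigma_f)\to\mathcal{F}(\sigma_f)$ is a holomorphic self-map of a Riemann surface (by Proposition~\ref{invariance}), and then simply invokes Theorem~8.2, Corollary~8.4, and Lemma~8.5 of Milnor's \emph{Dynamics in One Complex Variable} applied to this system. Your first three steps (local convergence of $\lambda^{-n}(\sigma_f^n-p)$, extension to $\mathcal{A}$ via the functional equation, maximal univalent $\psi$ meeting a critical point on its boundary) are correct and are exactly the content of those cited results; the paper's approach buys only brevity, since nothing specific to $\sigma_f$ is needed beyond the invariance of $\mathcal{F}(\sigma_f)$.

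Your fourth step, however, is a genuine gap. You propose to show $\partial V$ is a Jordan curve and then apply Carath\'eodory, but you do not carry this out, and you yourself flag that ruling out self-touching of the extended boundary is ``the delicate step'' left to an unspecified equicontinuity argument. The local branching model $w\mapsto w^{1/2}$ at the unique critical point does not by itself preclude global self-intersection of $\partial V$, nor does it yield continuity of $\psi$ up to $\partial\mathbb{D}_r$. This is precisely what Milnor's Lemma~8.5 establishes (via a direct argument that the functional equation $\psi(w)=\sigma_f^{-1}(\psi(\lambda w))$, with the appropriate continuous branch, extends $\psi$ homeomorphically to the closed disc); the paper sidesteps the issue entirely by citing it. If you wish to avoid the citation you would need to complete that argument, but as written the homeomorphic-extension claim is asserted rather than proven.
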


\begin{proof} We need only observe that for such a $p$, we have that $p\in\mathcal{F}(\sigma_f)$ and hence $\mathcal{A}\subset\mathcal{F}(\sigma_f)$. Thus denoting the Riemann surface $S:=\mathcal{F}(\sigma_f)$, we have $\sigma_f: S \rightarrow S$ by Proposition \ref{invariance}. The conclusions listed in the Theorem now follow directly from applying Theorem 8.2, Corollary 8.4, and Lemma 8.5 of \cite{MR2193309} to the holomorphic dynamical system $\sigma_f: S \rightarrow S$.
\end{proof}

\begin{thm}\label{bottcher_coord}{\bf (B\"ottcher)} Let $f\in\mathcal{R}$ be such that $\sigma_f(p)=p$, $\sigma_f'(p)=0$, and let $\mathcal{A}:=\mathcal{A}(p)$. Then there is a conformal map $\phi: \mathcal{A} \rightarrow \mathbb{D}$ such that the diagram \[
  \begin{tikzcd}
    \mathcal{A} \arrow{r}{\sigma_f} \arrow[swap]{d}{\phi}  & \mathcal{A} \arrow{d}{\phi} \\
     \mathbb{C} \arrow{r}{z\mapsto z^2}  & \mathbb{C}
  \end{tikzcd}
\] commutes. 
\end{thm}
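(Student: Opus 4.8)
The plan is to follow the template of the proof of Theorem \ref{Koenig_coord}: reduce to a holomorphic dynamical system on a Riemann surface, invoke the classical B\"ottcher normalization, and then exploit the covering structure of Proposition \ref{first_covering_property} to promote the local normal form to a conformal isomorphism of the entire basin with $\mathbb{D}$.

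First I would note that $\sigma_f'(p) = 0$ makes $p$ a superattracting fixed point, so $p \in \mathcal{F}(\sigma_f)$ and hence $\mathcal{A} = \mathcal{A}(p) \subset \mathcal{F}(\sigma_f)$. Setting $S := \mathcal{F}(\sigma_f)$, Proposition \ref{invariance} gives $\sigma_f: S \to S$ as a holomorphic dynamical system, to which I would apply the local B\"ottcher theorem (Theorem 9.1 of \cite{MR2193309}) near $p$. This produces a conformal germ $\phi$ with $\phi(p) = 0$ conjugating $\sigma_f$ to $w \mapsto w^2$; the exponent is exactly $2$ because, by Proposition \ref{first_covering_property}, $\sigma_f$ is a degree-$2$ branched cover near its branch point $p = f \circ M(c_f)$ (equivalently, $c_f$ is a simple critical point of $f$, so $\sigma_f$ has local degree $2$ at $p$).

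The key step is to globalize $\phi$. Here Proposition \ref{first_covering_property} is decisive: the degree-$3$ part of $\sigma_f$ is an unbranched covering and the degree-$2$ part is branched only at $p$, so $p$ is the \emph{unique} critical point of $\sigma_f$. In particular the immediate basin $\mathcal{A}_0$ contains no critical point other than $p$ itself, and the standard extension of the inverse B\"ottcher coordinate (Theorem 9.3 of \cite{MR2193309}) meets no obstruction; thus $\phi$ extends to a conformal isomorphism $\mathcal{A}_0 \to \mathbb{D}$. To finish I would identify $\mathcal{A}$ with $\mathcal{A}_0$: since the only critical value $\sigma_f(p) = p$ lies in $\mathcal{A}_0$ and $\sigma_f: U \to f(D)$ is a proper degree-$2$ branched cover (Proposition \ref{first_covering_property}), the preimage of $\mathcal{A}_0$ is connected and contains $p$, forcing $\mathcal{A}_0$ to be completely invariant in $\mathcal{F}(\sigma_f)$, so $\mathcal{A} = \mathcal{A}_0$ (consistently, $c_f \notin I(\sigma_f)$, so $K(\sigma_f)$ is connected by Proposition \ref{conn_locus}). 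I expect the main obstacle to be exactly this globalization: ensuring the B\"ottcher coordinate extends, without ever encountering a second critical value, all the way out to $\partial\mathbb{D}$ and surjects onto the whole basin, which is precisely where the single-critical-point structure from Proposition \ref{first_covering_property} must be invoked.
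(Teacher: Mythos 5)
Your proposal is correct and follows essentially the same route as the paper: pass to the holomorphic dynamical system $\sigma_f$ on $S:=\mathcal{F}(\sigma_f)$ via Proposition \ref{invariance}, obtain the local B\"ottcher coordinate from Theorem 9.1 of \cite{MR2193309}, and globalize via Theorem 9.3 of \cite{MR2193309} using the fact (Proposition \ref{first_covering_property}) that $\sigma_f$ has no critical point in $S$ other than $p$. Your additional step identifying $\mathcal{A}$ with $\mathcal{A}_0$ is a detail the paper leaves implicit, and it is a reasonable supplement rather than a divergence in method.
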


\begin{proof} As in the proof of Theorem \ref{Koenig_coord}, we consider the holomorphic dynamical system $\sigma_f: S \rightarrow S$ with $S:=\mathcal{F}(\sigma_f)$. The existence of a holomorphic map $\phi$ satisfying the conclusions of the Theorem follow directly from Theorem 9.1 of \cite{MR2193309}. By  Proposition \ref{first_covering_property}, $\sigma_f$ has no other critical points in $S$, and so by Theorem 9.3 of \cite{MR2193309}, the map $\phi$ is conformal in $\mathcal{A}$.
\end{proof}

\begin{rem} We will refer to $\phi$ as in the conclusion of Theorem \ref{Koenig_coord} as the K\oe nig coordinate for $\sigma_f$, and $\phi$ as in the conclusion of Theorem \ref{bottcher_coord} as the B\"ottcher coordinate for $\sigma_f$.
\end{rem} 




\begin{thm}\label{Fatou_set_conjugacy} Let $f\in\mathcal{R}$, $p\in\textrm{Pol}_2$ be such that $\sigma_f$ and $p$ each have an attracting fixed point of the same multiplier, with basins of attraction $\mathcal{A}_\sigma$, $\mathcal{A}_p$, respectively. Then there is a conformal map $\psi: \mathcal{A}_{\sigma} \rightarrow \mathcal{A}_p$ such that $ \psi\circ\sigma_f\circ\psi^{-1}\equiv p \textrm{ on } \mathcal{A}_p$. 
\end{thm}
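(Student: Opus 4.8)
The plan is to build the conjugacy $\psi$ on the basins by comparing the two dynamical systems through their respective linearizing coordinates, which exist because both fixed points are attracting with the same multiplier $\lambda$. Both $\sigma_f$ and $p$ are genuinely holomorphic near their attracting fixed points (for $\sigma_f$ this is guaranteed since the fixed point lies in $\mathcal{F}(\sigma_f)$, as noted in the proof of Theorem \ref{Koenig_coord}), so the two cases $0<|\lambda|<1$ and $\lambda=0$ are handled by K\oe nig's and B\"ottcher's coordinates respectively.

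\textbf{The case $0<|\lambda|<1$.} First I would invoke Theorem \ref{Koenig_coord} to obtain a K\oe nig coordinate $\phi_\sigma: \mathcal{A}_\sigma \rightarrow \mathbb{C}$ conjugating $\sigma_f$ to multiplication by $\lambda$, and the classical K\oe nig coordinate $\phi_p: \mathcal{A}_p \rightarrow \mathbb{C}$ conjugating $p$ to the same linear map $z\mapsto\lambda z$. The natural candidate is $\psi := \phi_p^{-1}\circ\phi_\sigma$, and on the linearized level one checks immediately that $\phi_p\circ\psi\circ\sigma_f = \lambda\cdot\phi_p\circ\psi = \phi_p\circ p\circ\psi$, so $\psi$ conjugates $\sigma_f$ to $p$ wherever it is defined. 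The content is therefore to show $\psi$ is a \emph{globally} defined conformal bijection $\mathcal{A}_\sigma\to\mathcal{A}_p$, not merely a germ. Here the crucial input from Theorem \ref{Koenig_coord} is the description of the maximal disc $\mathbb{D}_r$ of injectivity of $\phi_\sigma^{-1}$, whose boundary passes through a critical point of $\sigma_f$; by Proposition \ref{first_covering_property} the unique critical point of $\sigma_f$ in the Fatou set is the one at $f\circ M(c_f)$, so both $\sigma_f$ and the quadratic $p$ have exactly one critical point in their immediate basins. This matching of critical data lets me extend $\psi$ from the linearizable disc to all of $\mathcal{A}_\sigma$ by the usual pullback argument: having defined $\psi$ on $\phi_\sigma^{-1}(\mathbb{D}_{r'})$ for $r'<r$, I extend it to preimages using the functional equation $\psi = p^{-1}\circ\psi\circ\sigma_f$, choosing branches consistently. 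The two systems have identical branched-covering structure over the linearizing disc (both degree $1$ away from the single critical value and with one simple critical point on $\partial\mathbb{D}_r$), so the extension proceeds identically on both sides and never encounters an obstruction.

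\textbf{The case $\lambda=0$.} Here I would instead use the B\"ottcher coordinate of Theorem \ref{bottcher_coord}, which gives a conformal $\phi_\sigma:\mathcal{A}_\sigma\to\mathbb{D}$ conjugating $\sigma_f$ to $z\mapsto z^2$ on all of $\mathcal{A}_\sigma$ (the global conformality on the whole basin is exactly the content of Theorem \ref{bottcher_coord}, using that there are no further critical points in $S$). The classical B\"ottcher coordinate $\phi_p:\mathcal{A}_p\to\mathbb{D}$ does the same for $p$, and since both conjugate to the \emph{same} model $z\mapsto z^2$, the composition $\psi := \phi_p^{-1}\circ\phi_\sigma: \mathcal{A}_\sigma\to\mathcal{A}_p$ is immediately a conformal bijection satisfying $\psi\circ\sigma_f\circ\psi^{-1}=p$. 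This case is essentially free once Theorem \ref{bottcher_coord} is in hand.

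\textbf{Main obstacle.} The genuine work is entirely in the attracting (non-superattracting) case: I expect the hardest step to be verifying that the germ $\psi=\phi_p^{-1}\circ\phi_\sigma$ extends to a well-defined homeomorphism across the whole basin, in particular across the critical point on $\partial\mathbb{D}_r$ and through all its preimages. The key is that the extension of $\phi_\sigma^{-1}$ past $\mathbb{D}_r$ involves exactly the same combinatorial branching as the quadratic polynomial's, which I would argue using the homeomorphic boundary extension and the single-simple-critical-point statement furnished by Theorem \ref{Koenig_coord} together with Proposition \ref{first_covering_property}; once the critical orbits are matched, standard holomorphic-motion / pullback reasoning closes the argument without isolated singularities or monodromy problems.
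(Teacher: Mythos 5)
Your proposal follows essentially the same route as the paper: K\oe nig coordinates (resp.\ B\"ottcher coordinates in the superattracting case) for both maps, matching of the critical data on the boundary of the maximal linearizing disc, and extension of $\phi_p^{-1}\circ\phi_\sigma$ to the full basins by iterated lifting under the unbranched coverings obtained by deleting the unique critical point and its image. The only detail you leave implicit is the explicit normalization of $\phi_\sigma$ (by a positive real scalar so the two maximal discs have equal radius, and by a unimodular constant so that $\phi_p^{-1}\circ\phi_\sigma(c_\sigma)=c_p$), which the paper carries out before the lifting step.
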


\begin{proof}

We first assume that $\lambda\not=0$. Let $\phi_\sigma$, $\phi_p$ denote the K\oe nig coordinates for $\sigma$, $p$, respectively. Let $\mathbb{D}_r$, as in Proposition \ref{Koenig_coord}, denote the maximal disc centered at $0$ to which $\phi_\sigma^{-1}$ extends conformally. By multiplying $\phi_\sigma$ by a real number, we may assume that $\mathbb{D}_r$ is also the maximal disc centered at $0$ to which $\phi_{p}^{-1}$ extends conformally. Thus, by Lemma 8.5 of \cite{MR2193309}, $\phi_p^{-1}(\partial \mathbb{D}_r)$ contains a critical point $c_p$ of $p$, and by Theorem \ref{Koenig_coord}, $\phi_\sigma^{-1}(\partial \mathbb{D}_r)$ contains the critical point $c_\sigma$ of $\sigma$. By multiplying $\phi_\sigma$ by a unimodular constant, we may assume that \begin{equation}\label{crit_pts_correspond}\phi_p^{-1}\circ\phi_\sigma(c_\sigma)=c_p.\end{equation} 

Now, as $\phi_\sigma$, $\phi_p$ are K\oe nig coordinates, we have \begin{equation}\label{first_conj} (\phi_p^{-1} \circ \phi_\sigma) \circ \sigma \circ (\phi_p^{-1} \circ \phi_\sigma)^{-1} \equiv p \textrm{ on } \phi_p^{-1}\left(\overline{\mathbb{D}_r}\right). \end{equation} Thus by (\ref{crit_pts_correspond}) and (\ref{first_conj}), we have that \begin{equation}\label{crit_val_correspond} \phi_p^{-1} \circ \phi_\sigma(\sigma(c_\sigma)) =   p \circ \phi_p^{-1} \circ \phi_\sigma(c_\sigma) =  p(c_p). \end{equation} Moreover, as $\sigma$ has only one critical point by Proposition \ref{first_covering_property}, \begin{equation}\label{coverings}  \sigma: \mathcal{A}\setminus\{c_\sigma\} \rightarrow \mathcal{A}\setminus\{\sigma(c_\sigma)\}  \textrm{ and } p: \mathcal{A}\setminus\{c_p\} \rightarrow \mathcal{A}\setminus\{p(c_p)\} \end{equation} are both (unbranched) covering maps. Thus, using (\ref{crit_pts_correspond}) and (\ref{crit_val_correspond}), we have by iterative lifting under the coverings (\ref{coverings}), that\[  \phi_p^{-1}\circ\phi_\sigma: \phi_\sigma^{-1}(\mathbb{D}_r)\setminus\{ \sigma(c_\sigma) \} \rightarrow  \phi_p^{-1}(\mathbb{D}_r)\setminus\{ p(c_p) \}    \] extends to a conformal map \[ \phi_p^{-1}\circ\phi_\sigma:  \mathcal{A}\setminus\{c_\sigma, \sigma(c_\sigma)\} \rightarrow \mathcal{A}\setminus\{c_p,p(c_p)\} \textrm{ satisfying }  (\phi_p^{-1} \circ \phi_\sigma) \circ \sigma \circ (\phi_p^{-1} \circ \phi_\sigma)^{-1} \equiv p\textrm{ on } \mathcal{A}\setminus\{c_p,p(c_p)\}.\]  The above relation was already shown to hold at the points $c_p$, $p(c_p)$ in (\ref{crit_pts_correspond}), (\ref{crit_val_correspond}), so that we have proven the existence of a conformal conjugacy $\phi_p^{-1}\circ\phi_\sigma: \mathcal{A}_\sigma\rightarrow \mathcal{A}_p$ between $\sigma|_{\mathcal{A}}$, $p|_{\mathcal{A}}$. 



Lastly, we remark that in the case $\lambda=0$, one readily checks that $\psi:=\phi_p^{-1}\circ\phi_\sigma$ is the desired conjugacy, where $\phi_\sigma$, $\phi_p$ now denote the B\"ottcher coordinates for $\sigma$, $\phi$, respectively. 





\end{proof}



\section{Modular Group Structure}\label{mod_group_struct}

\begin{notation} We will use the notation $\alpha$, $\beta$ to denote the maps:
\begin{equation} \alpha(z):= \frac{-1}{z}\textrm{, }\hspace{3mm}\beta(z):=z+2. \end{equation} We denote by $\Gamma$ the subgroup of $\textrm{PSL}(2,\mathbb{Z})$ generated by $\alpha, \beta \in \textrm{PSL}(2,\mathbb{Z})$. 
\end{notation}


\begin{notation} We denote by: \begin{equation} U:= \bigg\{ z \in \mathbb{H} : -1 < \textrm{Re}(z) < 1 \bigg\} \cap \left\{ z \in \mathbb{H} : \left|z\right| > 1 \right\}\textrm{, and} \end{equation}
\begin{equation} D:= U \cup \{ z \in \mathbb{H} : \textrm{Re}(z)=-1 \} \cup \{ z \in \mathbb{H} : |z| = 1, \hspace{1mm} \textrm{Re}(z)\leq0 \}  \end{equation} 

\end{notation}

\begin{prop}\label{fundamental_domain} The set $D$ is a fundamental domain for $\Gamma$. 
\end{prop}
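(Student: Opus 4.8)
The claim to establish is that $D$ is a fundamental domain for $\Gamma=\langle\alpha,\beta\rangle$, i.e. that the $\Gamma$-translates of $D$ tile $\mathbb{H}$ and each $\Gamma$-orbit meets $D$ in exactly one point; the delicate feature is the prescription of precisely \emph{which} boundary geodesics of $U$ are adjoined. First I would record the hyperbolic geometry of $U$: it is the ideal triangle with vertices $-1,1,\infty\in\partial\mathbb{H}$, bounded by the two vertical geodesics $\{\textrm{Re}(z)=\pm1\}$ and by the arc $\{|z|=1\}$ of the unit circle, which is itself a geodesic since it is centered on $\mathbb{R}$. The two generators act as side-pairings: $\beta(z)=z+2$ carries the left side $\{\textrm{Re}(z)=-1\}$ onto the right side $\{\textrm{Re}(z)=1\}$, while $\alpha(z)=-1/z$ preserves the unit circle and carries its left half $\{|z|=1,\ \textrm{Re}(z)<0\}$ onto its right half, folding at the fixed point $i$ where $\alpha^2=\mathrm{id}$. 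This is exactly the configuration to which Poincaré's polygon theorem applies.

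The plan is to invoke Poincaré's polygon theorem with these two side-pairings. One checks the cycle conditions: the ideal vertices $-1,1,\infty$ give cusps (parabolic cycles of vanishing angle sum), and the fold point $i$ gives an elliptic cycle whose interior angle $\pi$, together with the order $2$ of $\alpha$, yields angle sum $2\pi$. Poincaré's theorem then delivers simultaneously that $\Gamma$ is discrete, that $\overline{U}$ is a fundamental polygon with the stated identifications, and (useful for later sections) the presentation $\Gamma=\langle\alpha,\beta\mid\alpha^2\rangle$. To pass from $\overline{U}$ to the half-open set $D$, I would use the side-pairings to retain exactly one representative of each identified pair: keep the left vertical side $\{\textrm{Re}(z)=-1\}$ and discard its $\beta$-image $\{\textrm{Re}(z)=1\}$, and keep the left circular arc $\{|z|=1,\ \textrm{Re}(z)\le0\}$ (which contains the $\alpha$-fixed point $i$ exactly once) while discarding its $\alpha$-image. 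This selection is precisely the set $D$, and by construction it meets each orbit exactly once.

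For the covering half one can argue directly and self-containedly via the reduction map $\rho$: given $z\in\mathbb{H}$, apply $\alpha$ whenever $|z|<1$, which strictly increases imaginary part since $\textrm{Im}(-1/z)=\textrm{Im}(z)/|z|^2$, and apply $\beta^{\pm1}$ whenever $|\textrm{Re}(z)|>1$, which preserves imaginary part. Concretely, choose $\gamma_0\in\Gamma$ maximizing $\textrm{Im}(\gamma_0 z)$ (possible by discreteness), normalize the real part into $[-1,1]$ by a power of $\beta$, and observe that the resulting point must satisfy $|z|\ge1$: otherwise a further application of $\alpha$ would exceed the maximal imaginary part. Hence every orbit meets $\overline{U}$.

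The main obstacle is the injectivity/exactly-once assertion rather than the covering. Verifying the Poincaré cycle conditions rigorously, especially the order-$2$ elliptic vertex $i$ and confirming the group generated by the side-pairings introduces no further identifications, is the crux. If one instead attempts a purely elementary orbit argument, the difficulty becomes acute: $U$ contains points of arbitrarily small imaginary part near the cusps $\pm1$, so the familiar ``$\textrm{Im}$ bounded below'' argument that settles uniqueness for the standard $\mathrm{PSL}(2,\mathbb{Z})$ domain fails outright. A robust way to sidestep this uses $\Gamma\le\mathrm{PSL}(2,\mathbb{Z})$ of index $3$: fix coset representatives $g_0=\mathrm{id},g_1,g_2$ for $\Gamma\backslash\mathrm{PSL}(2,\mathbb{Z})$, identify $\overline{U}=\bigcup_i g_i\overline{F}$ as a union of three copies of the standard modular domain $F=\{|\textrm{Re}(z)|\le\tfrac12,\ |z|\ge1\}$ meeting only along boundaries, and deduce the $\Gamma$-tiling by $\overline{U}$ from the $\mathrm{PSL}(2,\mathbb{Z})$-tiling by $F$. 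Matching the three copies to the correct coset representatives, and then the boundary bookkeeping producing $D$, is the remaining point to pin down.
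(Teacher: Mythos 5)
Your proposal is correct, but the uniqueness half takes a genuinely different route from the paper. The paper's covering argument is the same as yours (maximize $\textrm{Im}(gz)$ over the orbit using $\textrm{Im}(gz)=\textrm{Im}(z)/|cz+d|^2$, then normalize the real part and rule out $|w|<1$ by maximality). For the injectivity half, however, the paper does not invoke Poincar\'e's polygon theorem: it writes $D$ as a union of the five pieces $\tilde{D}$, $T^{-1}(\tilde{D})\cap D$, $T(\tilde{D})\cap D$, $T^{-1}S(\tilde{D})\cap D$, $TS(\tilde{D})\cap D$, where $\tilde{D}$ is the standard fundamental domain for $\textrm{PSL}(2,\mathbb{Z})$, and then runs an explicit case analysis: if $z$ and $gz$ both lay in $D$, the coset bookkeeping would force $g$ to be a specific word ($T$, $T^2$ composed with a translation back, $TST$, etc.), each of which is either excluded by a direct computation of images or leads to $T\in\Gamma$, contradicting $\Gamma\subsetneq\textrm{PSL}(2,\mathbb{Z})$. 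This is exactly the ``index-$3$ subgroup'' fallback you sketch in your last paragraph, carried out with partial rather than whole copies of $\tilde{D}$ (two of the three cosets contribute split pieces straddling the lines $\textrm{Re}(z)=\pm\tfrac12$). Your primary route via Poincar\'e's theorem buys more — discreteness and the presentation $\langle\alpha,\beta\mid\alpha^2\rangle$ come for free, and it does not presuppose $T\notin\Gamma$, which the paper's argument quietly relies on — at the cost of importing a heavier theorem and the careful verification of the elliptic cycle at $i$ and the parabolic cycles at $-1,1,\infty$ (all of which you check correctly: the cycle transformation at $\pm1$ is $\alpha\beta^{-1}$, of trace $-2$). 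One further small point in your favor: your ``boundary bookkeeping'' correctly notes that moving a point of $\overline{U}$ with $|z|=1$, $\textrm{Re}(z)>0$ into $D$ requires an application of $\alpha$, whereas the paper's final sentence of the covering argument uses only powers of $\beta$ and is slightly too terse there.
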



\begin{proof} First we prove that for any $z\in\mathbb{H}$, there exists $g\in\Gamma$ with $gz\in D$. Note the identity: \begin{equation}\label{max_imag} \textrm{Im}\left( \frac{az+b}{cz+d} \right) = \frac{\textrm{Im}(z)}{|cz+d|^2} \textrm{ for } \begin{pmatrix} a & b \\ c & d \end{pmatrix} \in \textrm{PSL}(2,\mathbb{Z}). \end{equation} Let $z:=x+iy$. Then $|cz+d|^2=(cx+d)^2+(cy)^2$. Thus, as $h$ ranges over all elements of $\Gamma$ and $z$ is fixed, $|cz+d|^2$ stays bounded away from $0$. Hence, by (\ref{max_imag}), there exists $g_0\in\Gamma$ such that $\textrm{Im}(hz)\leq\textrm{Im}(g_0z)$ for all $h\in\Gamma$. In particular, we have $\textrm{Im}(\alpha g_0z)\leq\textrm{Im}(g_0z)$. Thus \begin{equation} \textrm{Im}\left( \alpha g_0z \right) = \frac{\textrm{Im}(g_0z)}{|g_0z|^2} \leq \textrm{Im}(g_0z), \end{equation} and so $|g_0z|^2\geq1$. It follows then that for some $n\in\mathbb{Z}$ we have $\beta^ng_0z\in D$ as needed.


Secondly, we prove that if $z\in D$, then $gz\not\in D$ for all non-identity elements $g\in\Gamma$. We will appeal to the standard result that \[ \tilde{D}:=D \cap \left\{ z \in \mathbb{H} : -1/2 \leq \textrm{Re}(z) < 1/2 \right\} \] is a fundamental domain for $\textrm{PSL}(2,\mathbb{Z})$. The proof breaks up into several cases: namely we wish to show that if $z$ belongs to any of the regions $\tilde{D}$, $T^{-1}(\tilde{D})\cap D$, $T(\tilde{D})\cap D$, $T^{-1}S(\tilde{D}) \cap D$ or $TS(\tilde{D})\cap D$ (whose union constitutes $D$: see Figure \ref{fig:fund_dom_mod_group}) then $gz\not\in D$.  We focus on the case that $z\in T^{-1}\tilde{D} \cap D$ as the other cases are similar.

We will use the notation $T(z):=z+1$, and $S(z):=\alpha(z)$. Let $g\in\Gamma$ be a non-identity element. Since $T^{-1}\tilde{D}$ is also a fundamental domain for $\textrm{PSL}(2,\mathbb{Z})$, it can not be the case that $gz\in T^{-1}\tilde{D}$. Similarly, $gz\not\in\tilde{D}$ since otherwise we would have $g=T\not\in\Gamma$. If $gz\in T\tilde{D} \cap D$, then $g=T^2$, but $T^2(T^{-1}\tilde{D} \cap D)\cap D=\emptyset$. We also have $T^{-1}ST(T^{-1}\tilde{D} \cap D)\cap D=\emptyset$, so that by the same reasoning, $gz\not\in T^{-1}S(\tilde{D})$. Lastly, if $gz\in TS(\tilde{D})$, then $g=TST$. But then $Sg=T^{-1}S^{-1}\in\Gamma$ (where we have used the identity $(ST)^3=I$), and so $SgS=T^{-1} \in \Gamma$, and this is a contradiction as $\Gamma \subsetneq \textrm{PSL}(2,\mathbb{Z})$. Thus we have proven that if $z\in T^{-1}\tilde{D} \cap D$ and $g\in\Gamma$, then $gz\not\in D$. In the other cases: $z\in \tilde{D}$, $z\in T(\tilde{D})\cap D$, $z\in T^{-1}S(\tilde{D}) \cap D$ or $z\in TS(\tilde{D})\cap D$, one similarly verifies that $gz\not\in D$.


\begin{figure}
\centering
\scalebox{.3}{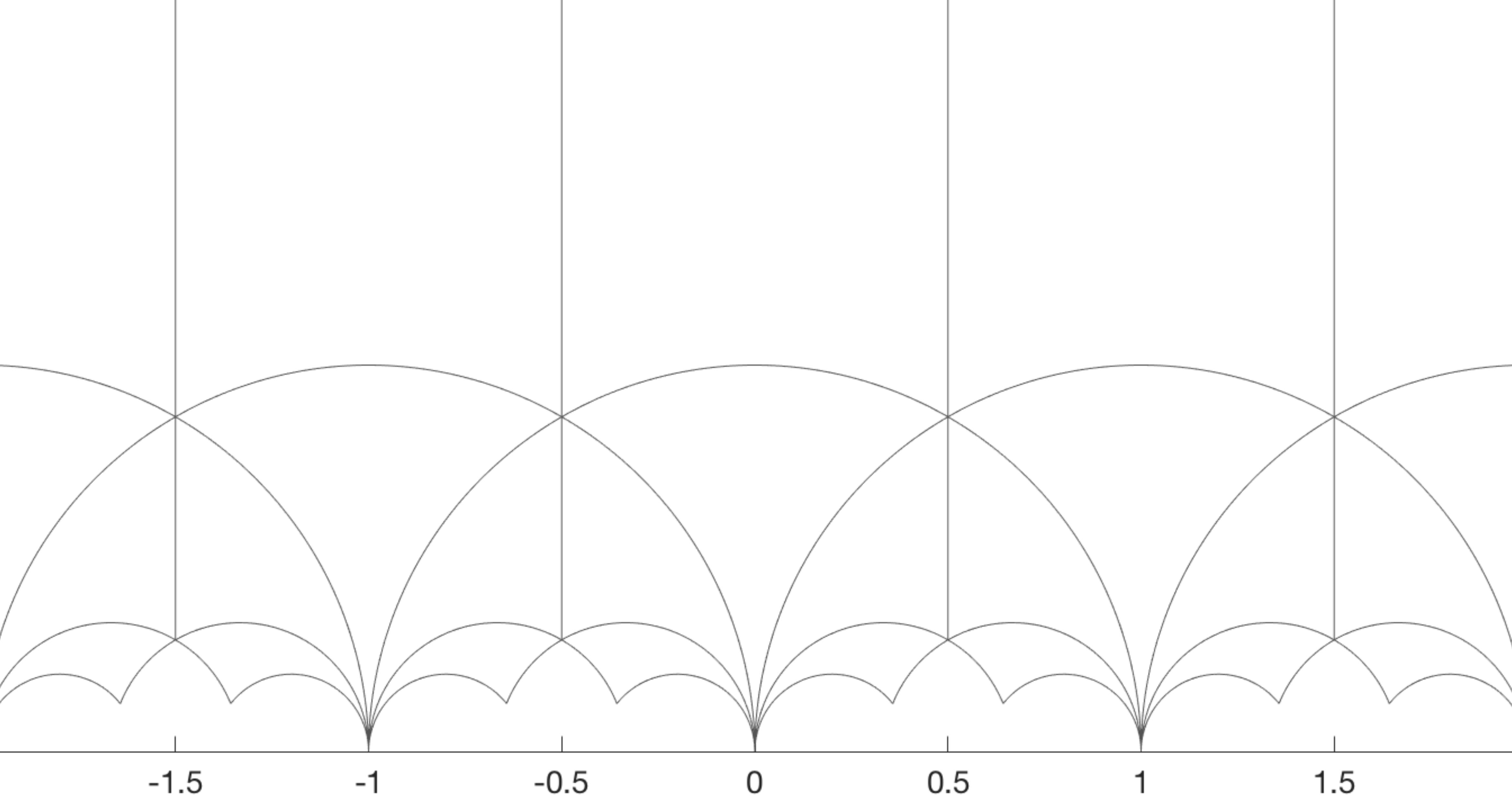}
\captionsetup{width=.9\textwidth}
\caption{An overlay of the fundamental domain $\tilde{D}$ for the group $\textrm{PSL}(2,\mathbb{Z})$ with the fundamental domain for $\Gamma$ outlined by the dotted lines and $(|z|=1)\cap\mathbb{H}$ (see also Figure \ref{fig:fundamental_dom_group}). }
\label{fig:fund_dom_mod_group}
\end{figure}


\end{proof}

\begin{definition}\label{rho_definition} Define the map $\rho: \mathbb{H}\setminus\overline{D}\rightarrow\mathbb{H}$ by 
\begin{align}
 \rho(z) := \left\{
        \begin{array}{ll}
        \alpha(z) & \quad \textrm{ if } |z|<1, \\            
        \beta^{-1}(z) & \quad \textrm{ if } \textrm{Re}(z)>1, \\
        \beta(z) & \quad \textrm{ if } \textrm{Re}(z)<-1. \\        
        \end{array}
    \right.
     \end{align}
\end{definition}

\begin{prop} Let $g\in\Gamma$, and let $n$ be the length of the word defining $g$. Then $\rho^n\circ g(D)=D$.
\end{prop}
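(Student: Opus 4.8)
The plan is to induct on the word length $n$, peeling off one generator with each application of $\rho$; the geometric heart is a ping-pong bookkeeping of where each tile $g(D)$ sits relative to the three regions on which $\rho$ is defined. Set
\[
A := \{z\in\mathbb H : |z|<1\},\quad R := \{z\in\mathbb H : \mathrm{Re}(z)>1\},\quad L := \{z\in\mathbb H : \mathrm{Re}(z)<-1\},
\]
so that by Definition \ref{rho_definition} we have $\rho=\alpha$ on $A$, $\rho=\beta^{-1}$ on $R$, and $\rho=\beta$ on $L$; equivalently $\rho=\gamma^{-1}$ on the region associated with $\gamma\in\{\alpha,\beta,\beta^{-1}\}$ (recall $\alpha^{-1}=\alpha$). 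First I would record the elementary containments
\[
\alpha(\overline R \cup \overline L \cup \overline U)\subseteq \overline A,\qquad \beta(\overline A\cup\overline R\cup\overline U)\subseteq\overline R,\qquad \beta^{-1}(\overline A\cup\overline L\cup\overline U)\subseteq\overline L,
\]
which hold because $\overline R\cup\overline L\cup\overline U\subseteq\{|z|\ge1\}$ while $\alpha$ inverts the modulus, because $\overline A\cup\overline R\cup\overline U\subseteq\{\mathrm{Re}\ge-1\}$ while $\beta$ translates by $+2$, and symmetrically for $\beta^{-1}$.

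The key lemma is then: for any reduced word $g=\gamma_1\cdots\gamma_n$ in the letters $\{\alpha,\beta,\beta^{-1}\}$ with $n\ge1$, the tile $g(\overline U)$ is contained in $\overline A$, $\overline R$, or $\overline L$ according as $\gamma_1=\alpha$, $\beta$, or $\beta^{-1}$. I would prove this by induction on $n$: writing $g=\gamma_1 g'$ with $g'=\gamma_2\cdots\gamma_n$, reducedness forces $\gamma_2$ to avoid the inverse of $\gamma_1$, so the inductive hypothesis places $g'(\overline U)$ in exactly the union of regions to which the displayed containment applies, and applying $\gamma_1$ lands $g(\overline U)$ in the asserted region (the summand $\overline U$ covers the base case $g'=\mathrm{id}$). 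Here I am invoking the free-product structure $\Gamma\cong\langle\alpha\rangle*\langle\beta\rangle\cong\mathbb Z/2*\mathbb Z$, which guarantees both that reduced words are unique (so $n$ is well defined) and that no cancellation occurs; this structure may itself be obtained from the ping-pong lemma applied to the same regions $A,R,L$.

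Granting the lemma, the proposition follows by a second induction on $n$. For $n=0$ there is nothing to prove, since $g=\mathrm{id}$ and $\rho^0\circ g(D)=D$. For $n\ge1$, the lemma places $g(D)\subseteq g(\overline U)$ in the region on which $\rho=\gamma_1^{-1}$, whence $\rho\circ g(D)=\gamma_1^{-1}\,g(D)=(\gamma_2\cdots\gamma_n)(D)=g'(D)$, a tile whose reduced word has length $n-1$. The inductive hypothesis then gives $\rho^{n-1}(g'(D))=D$, and therefore $\rho^n\circ g(D)=D$.

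The one point demanding care is the boundary: $\rho$ is defined only on $\mathbb H\setminus\overline D$, and since $D$ carries only its left boundary pieces, an image such as $\beta(D)$ meets $\overline U$ along the geodesic $\{\mathrm{Re}=1\}$, where $\rho$ is a priori undefined. I would handle this by extending $\rho$ to these shared boundary geodesics by the appropriate generator; the extension is consistent precisely because of the half-open choice of $D$, so that the set identity $\rho^n\circ g(D)=D$ holds on the nose, while the interior statement—the substantive content—is unaffected. Thus the main obstacle is not the induction but verifying the tile-position lemma cleanly, i.e. checking that the ping-pong containments interact correctly with reducedness, together with this boundary bookkeeping.
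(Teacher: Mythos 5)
Your proof is correct and follows essentially the same route as the paper's: induct on word length, locate the tile $g(D)$ in the region determined by the first letter $\gamma_1$, and use $\rho=\gamma_1^{-1}$ there to peel off that letter. You simply make explicit (via the ping-pong containments, reducedness, and the boundary bookkeeping) the steps the paper's terser proof leaves implicit.
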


\begin{proof} We induct on the length of the word defining $g\in\Gamma$. In the base case (if $g=\alpha$, $\beta$, or $\beta^{-1}$) then the statement is clear (with $n=1$) from the definition of $\rho$. In the inductive case, we let $g=g_1...g_n$ with each $g_i\in\{\alpha, \beta, \beta^{-1}\}$. Suppose $g_1=\beta$. Then $g_2....g_n(D)\not\in \{ z \in \mathbb{H} : \textrm{Re}(z) < -1\}$, and so $g(D)\in\{z \in \mathbb{H} : \textrm{Re}(z) > 1\}$. Then $\rho\circ g(D):=\beta^{-1}\circ g(D)=g_2....g_n(D)$, whence the inductive hypothesis applies to show that $\rho^n\circ g(D)=D$. The proof in the case that $g_1=\beta^{-1}$ or $g_1=\alpha$ is similar. 
\end{proof}



\begin{definition}\label{regular} Let $f=(f, (c_1, c_2, c_3))\in\mathcal{R}$. We say $f$ is \emph{regular} if \begin{enumerate} \item $c_1=1$, \item $D_{c_1, c_2, c_3}=\mathbb{D}$, \item $c_f\not\in I(\sigma_f)$, and \item $f(\overline{z})=\overline{f(z)}$. \end{enumerate} 
\end{definition}

\begin{thm}\label{group_conjugacy} Let $f\in\mathcal{R}$ be regular. Then there exists a conformal map $\phi: \mathbb{H} \rightarrow I_f$ such that $\phi(\overline{D})=\overline{T_f}$ and \begin{equation} \rho(z) = \phi^{-1}\circ\sigma_f\circ\phi(z) \textrm{ for } z\in \mathbb{H}\setminus\overline{D}. \end{equation}
\end{thm}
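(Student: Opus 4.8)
The plan is to construct $\phi$ by fixing it on the fundamental tile and then propagating it through the dynamics, exploiting that both $\mathbb{H}$ and $I_f:=I(\sigma_f)$ carry combinatorially identical tilings. On the group side, Proposition \ref{fundamental_domain} together with the proposition preceding Definition \ref{regular} (that $\rho^n\circ g(D)=D$ for $g$ of word length $n$) exhibits $\mathbb{H}=\bigcup_{g\in\Gamma}g(\overline D)$ as a union of $\Gamma$-translates of $\overline D$, organized as a tree rooted at $\overline D$ whose parent map is $\rho$: the root has three children, the neighbours $\alpha(D),\beta(D),\beta^{-1}(D)$ across the three edges of $D$, and every other node has two. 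On the dynamical side, $\overline{T_f}$ is the closed exterior of the Jordan domain $\Omega:=f(\mathbb{D})$ with the three cusps $v_i:=f(c_i)$ marked on $\partial\Omega$, and $I_f=\bigcup_{n\geq0}\sigma_f^{-n}(T_f)$ is tiled by the components of these preimages. By Proposition \ref{first_covering_property} the map $\sigma_f\colon\sigma_f^{-1}(\textrm{int}\,T_f)\to\textrm{int}\,T_f$ is an unbranched degree $3$ cover of the simply connected domain $\textrm{int}\,T_f=\hat{\mathbb{C}}\setminus\overline\Omega$, so $T_f$ has exactly three children, matching the three branches $\alpha,\beta^{-1},\beta$ of $\rho$; and since $f$ is regular, Proposition \ref{conn_locus} gives that $K(\sigma_f)$ is connected, so the unique critical value of $\sigma_f$ does not lie in $I_f$, whence every further pullback of a tile along the degree $2$ map $\sigma_f\colon U\to f(\mathbb{D})$ is unbranched and splits into two components. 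Thus the two tilings have isomorphic trees, with $\rho$ and $\sigma_f$ realizing the parent maps.

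First I would build the base homeomorphism $\phi_0\colon\overline D\to\overline{T_f}$. The interior of $\overline D$ is an ideal triangle with ideal vertices $\{1,-1,\infty\}$, and $\textrm{int}\,T_f$ is conformally a disc with three marked boundary cusps $v_1,v_2,v_3$; take $\phi_0$ to be a Riemann map between the interiors, extended to the boundary by Carath\'eodory's theorem, normalized to carry the three ideal vertices to the three cusps respecting orientation and the $\mathbb{R}$-symmetry of $f$. The symmetry forces a specific matching: the anti-conformal involution $\tau(z)=-\bar z$ preserves $\overline D$ and corresponds under $\phi_0$ to complex conjugation on $\overline{T_f}$, i.e. $\phi_0(-\bar z)=\overline{\phi_0(z)}$, so the $\tau$-fixed vertex $\infty$ must go to the conjugation-fixed cusp $v_1=f(c_1)$ while $\{1,-1\}\mapsto\{v_2,v_3\}$. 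The essential feature to record is that $\phi_0$ then carries the side-pairing structure of $D$ to the boundary behaviour of $\sigma_f$: the arc $|z|=1$, which $\alpha$ folds at the fixed point $i$, is sent to the conjugation-invariant boundary arc of $\partial\Omega$ joining $v_2,v_3$ on which $\sigma_f(z)=\bar z$ folds at a real point, while the two vertical edges interchanged by $\beta$ are sent to the conjugate pair of arcs interchanged by $\sigma_f$.

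Next I would extend $\phi$ over the whole tree by the functional equation. Given a tile $t$ of level $n$ with $\rho$-parent $t'$ on which $\phi$ is already defined with image a tile $S'$ of $I_f$, the appropriate child $S$ of $S'$ maps homeomorphically onto $S'$ under $\sigma_f$ (all pullbacks being unbranched over $I_f$ by the previous paragraph), and I set $\phi|_{\overline t}:=(\sigma_f|_{S})^{-1}\circ\phi\circ\rho|_t$. The branch $S$ is pinned down by demanding continuity with $\phi$ across the edge that $t$ shares with its parent, which is legitimate because $\rho$ and $\sigma_f$ are honest conformal homeomorphisms on the relevant edges. Iterating, $\phi$ becomes defined and conformal on the interior of every tile and continuous on each closed tile, with $\sigma_f\circ\phi=\phi\circ\rho$ holding tilewise on $\mathbb{H}\setminus\overline D$.

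The hard part will be verifying that these tilewise definitions fit together into a single conformal bijection $\phi\colon\mathbb{H}\to I_f$. Two tiles meeting along a common edge must receive identical boundary values; transporting both sides back to the root by $\rho$ and $\sigma_f$, this reduces to the compatibility of $\phi_0$ with the side-pairings recorded above, together with the relations of $\Gamma$ (e.g. $\alpha^2=\textrm{id}$ and $(\alpha\beta)^{3}$-type relations), which guarantee that distinct words producing adjacent tiles force matching branch choices of $\sigma_f^{-1}$. Granting this edge-consistency, $\phi$ is continuous on $\mathbb{H}$ and holomorphic off the union of tile edges, which is locally a union of analytic arcs and hence removable, so $\phi$ is holomorphic throughout; injectivity and surjectivity onto $I_f$ then follow from the isomorphism of the two tiling trees, since every point of $\mathbb{H}$ lies in a unique tile and every tile of $I_f$ is attained exactly once. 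This last step is exactly where regularity condition (3) is indispensable: connectedness of $K(\sigma_f)$ keeps all pullbacks unbranched and prevents the tiling of $I_f$ from degenerating, so the tree isomorphism is genuine. Finally $\phi(\overline D)=\overline{T_f}$ and the conjugacy identity $\rho=\phi^{-1}\circ\sigma_f\circ\phi$ hold by construction, completing the proof.
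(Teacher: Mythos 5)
Your proposal is correct and follows essentially the same route as the paper: a base conformal map from the fundamental domain onto the fundamental tile normalized at the three marked points and respecting the $\mathbb{R}$-symmetry, then inductive extension by lifting under the degree-$3$ cover at the first level and the (unbranched, thanks to regularity condition (3)) degree-$2$ covers at deeper levels, with removability of the analytic tile edges giving global conformality. The only cosmetic differences are that the paper builds the symmetry in by Schwarz-reflecting a Riemann map of half the tile and checks edge-consistency via that symmetry relation rather than via your tree/side-pairing bookkeeping.
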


\begin{figure}
\centering
\scalebox{.2}{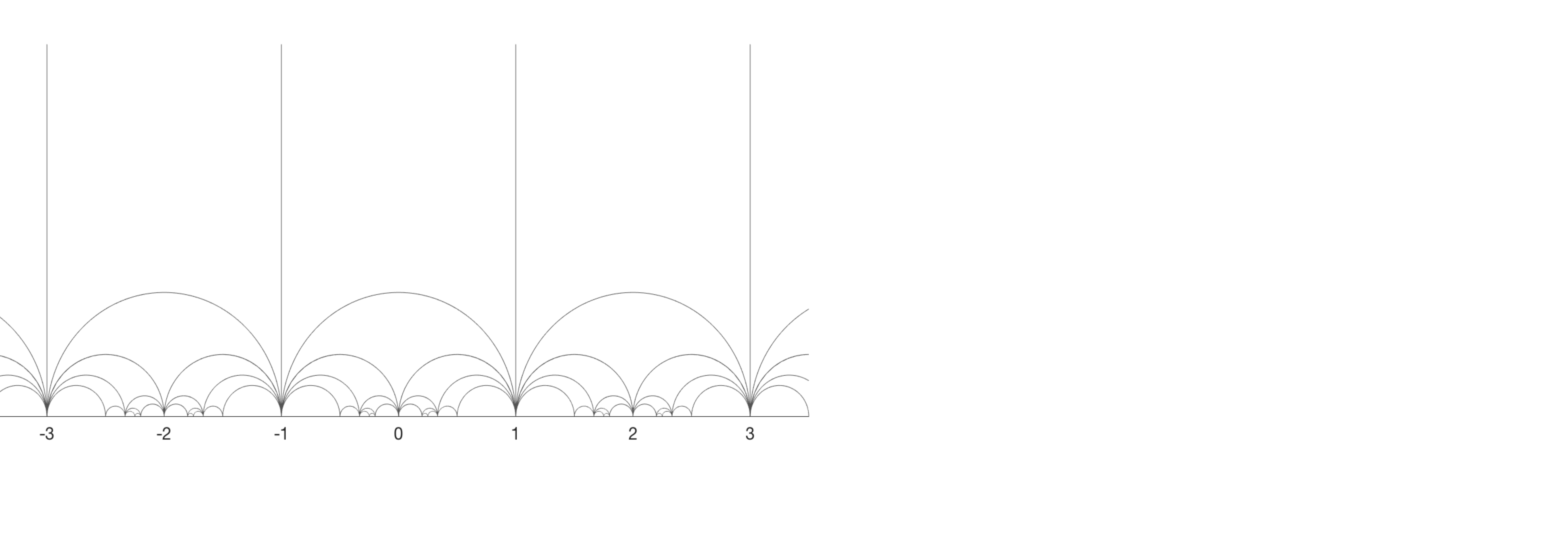}
\captionsetup{width=.9\textwidth}
\caption{Illustrated is the definition of the map $\phi$ in Theorem \ref{group_conjugacy}. Words in $\alpha$, $\beta$ on the right indicate images under $\phi$ of the corresponding tiles in $\mathbb{H}$.}
\label{fig:definition_of_phi.png}
\end{figure}

\begin{proof} As $f$ is regular, $f$ has two non-real critical points on $\mathbb{T}$ which are symmetric with respect to $\mathbb{R}$. We will denote these critical points $c$, $\overline{c}$, where $\textrm{Im}(c)>0$. We first define, by application of the Riemann mapping theorem\footnote{We note that a definition of $\phi$ may be computed explicitly in terms of $\sinh$ and $f^{-1}$. }, a conformal map \begin{equation} \phi: U \cap \{z \in \mathbb{H} : \textrm{Re}(z)>0\} \rightarrow \mathbb{H}\setminus \overline{f(\mathbb{\mathbb{D}})}\end{equation} with \begin{equation}\phi(\infty)=f(1), \hspace{2mm} \phi(1)=f(c) \textrm{ and } \phi(i)=f(-1). \end{equation} 


\noindent Next, we extend the definition of $\phi$ to $U$ as follows. Let $\iota:\mathbb{C}\rightarrow\mathbb{C}$ denote reflection in the imaginary axis. Then using the Schwarz reflection principle (in the imaginary axis in the $z$-plane, and the real-axis in the $w$-plane), we may extend the definition of $\phi$ to a conformal map \begin{equation}\label{symmetry_on_phi} \phi: U \rightarrow \mathbb{C}\setminus \overline{f(\mathbb{\mathbb{D}})} \textrm{ satisfying } \phi \circ \iota(z) = \overline{\phi(z)} \textrm{ for } z\in U. \end{equation} It follows then that $\phi(\overline{D})=\overline{T_f}$, as needed.

The maps \[\rho: \rho^{-1}(U) \rightarrow U \textrm{ and } \sigma_f: \sigma_f^{-1}(\textrm{int}\hspace{.5mm}T_f) \rightarrow \textrm{int}\hspace{.5mm}T_f \] are both degree 3 covers (see Proposition \ref{first_covering_property}), so that we can lift $\phi$ by the covers $\rho$, $\sigma_f$. Let $\hat\phi$ denote the unique such lift so that the extension of $\hat\phi$ to $\overline{\rho^{-1}(U)}$ agrees with $\phi$ at the points $0$, $1$, $\infty$. Extend the definition of $\phi$ to $\rho^{-1}(U)$ by $\phi(z):=\hat\phi(z)$ for $z\in\rho^{-1}(U)$. In fact, we claim that $\phi$ and $\hat\phi$ agree on the lines $x=1$ and $x=-1$: this follows directly from relation (\ref{symmetry_on_phi}) and the fact that $\hat\phi$ is a lift of $\phi$. Similar considerations show that $\phi$, $\hat\phi$ agree on the semi-circle $\mathbb{H}\cap\{z \in\mathbb{C} : |z| = 1\}$. Thus by removability of analytic arcs for conformal mappings, we have that: \[ \phi: D \cup \rho^{-1}(D) \rightarrow T_f \cup \sigma_f^{-1}(\textrm{int}\hspace{.5mm}T_f) \textrm{ is conformal.}\]

Recall that $f$ is assumed to regular (Definition \ref{regular}), and so in particular $c_f\not\in I(\sigma_f)$. Thus \[\rho: \rho^{-2}(U) \rightarrow \rho^{-1}(U) \textrm{ and } \sigma_f: \sigma_f^{-2}(\textrm{int}\hspace{.5mm}T_f) \rightarrow \sigma_f^{-1}(\textrm{int}\hspace{.5mm}T_f) \] are both degree 2 coverings. Thus again there is a unique lift \[\hat\phi: \rho^{-2}(U) \rightarrow \sigma_f^{-2}(\textrm{int}\hspace{.5mm}T_f) \textrm{ of } \phi: \rho^{-1}(U) \rightarrow  \sigma_f^{-1}(\textrm{int}\hspace{.5mm}T_f)\]  such that $\hat\phi$ extends the definition of $\phi: \rho^{-1}(U) \rightarrow  \sigma_f^{-1}(\textrm{int}\hspace{.5mm}T_f)$. Defining $\phi(z):=\hat\phi(z)$ for $z\in \rho^{-2}(U)$, we have that: \[ \phi: D \cup \bigcup_{n=1}^2 \rho^{-n}(D) \rightarrow T_f \cup \bigcup_{n=1}^2 \sigma_f^{-n}(\textrm{int}\hspace{.5mm}T_f) \textrm{ is conformal.}\]

Since $c_f\not\in I(\sigma_f)$, we can repeat this procedure to obtain a conformal map: \[ \phi: D \cup \bigcup_{n=1}^\infty \rho^{-n}(D) \rightarrow T_f \cup \bigcup_{n=1}^\infty \sigma_f^{-n}(\textrm{int}\hspace{.5mm}T_f). \] Since \[ D \cup \bigcup_{n=1}^\infty \rho^{-n}(D)=\mathbb{H} \textrm{ and } T_f \cup \bigcup_{n=1}^\infty \sigma_f^{-n}(\textrm{int}\hspace{.5mm}T_f) = I(\sigma_f)  \] by Proposition \ref{fundamental_domain} and Definition \ref{tile_escaping}, respectively, the proof is finished.



\end{proof}


\section{Conformal Mating Definition}\label{conf_mating_def_sec}

We now define the notion of conformal mating in Theorem \ref{main_theorem}. Our definitions follow \cite{AFST_2012_6_21_S5_839_0} loosely, to which we refer for a more extensive discussion of conformal mating in the polynomial setting. 

\begin{rem}\label{mark_part_rem} Let $\hat{\mathbb{R}}:=\mathbb{R}\cup\{\infty\}$. The map $\rho: \hat{\mathbb{R}}\rightarrow \hat{\mathbb{R}}$ admits a Markov partition with three partition pieces: \begin{enumerate} \item $I_1:=\{\infty\}\cup(-\infty,-1]$, \item $I_2:=[-1,1]$ and \item $I_3:=[1,\infty)\cup\{\infty\}$,\end{enumerate} such that: \begin{enumerate} \item $\rho(I_1)\subset I_1\cup I_2$, \item $\rho(I_2)\subset I_1\cup I_3$ and  \item $\rho(I_3)\subset I_2\cup I_3$. \end{enumerate} The map $z\mapsto z^2$ on $\mathbb{T}$ also admits a Markov partition with intervals at endpoints $0$, $e^{2\pi i/3}$, $e^{4\pi i/3}$ (see Figure \ref{fig:Conjugacy}). It is readily checked that this Markov partition yields the same transition matrix as for $\rho: \hat{\mathbb{R}}\rightarrow \hat{\mathbb{R}}$ given above. Thus one can define a homeomorphism $\mathcal{E}: \hat{\mathbb{R}}\rightarrow \mathbb{T}$ such that $\mathcal{E}(\infty)=1$, $\mathcal{E}(1)=e^{2\pi i/3}$, $\mathcal{E}(-1)=e^{4\pi i/3}$, and \[ \mathcal{E}^{-1} \circ (z\mapsto z^2) \circ \mathcal{E} \equiv \rho. \]
\end{rem}

\begin{figure}
\centering
\scalebox{.5}{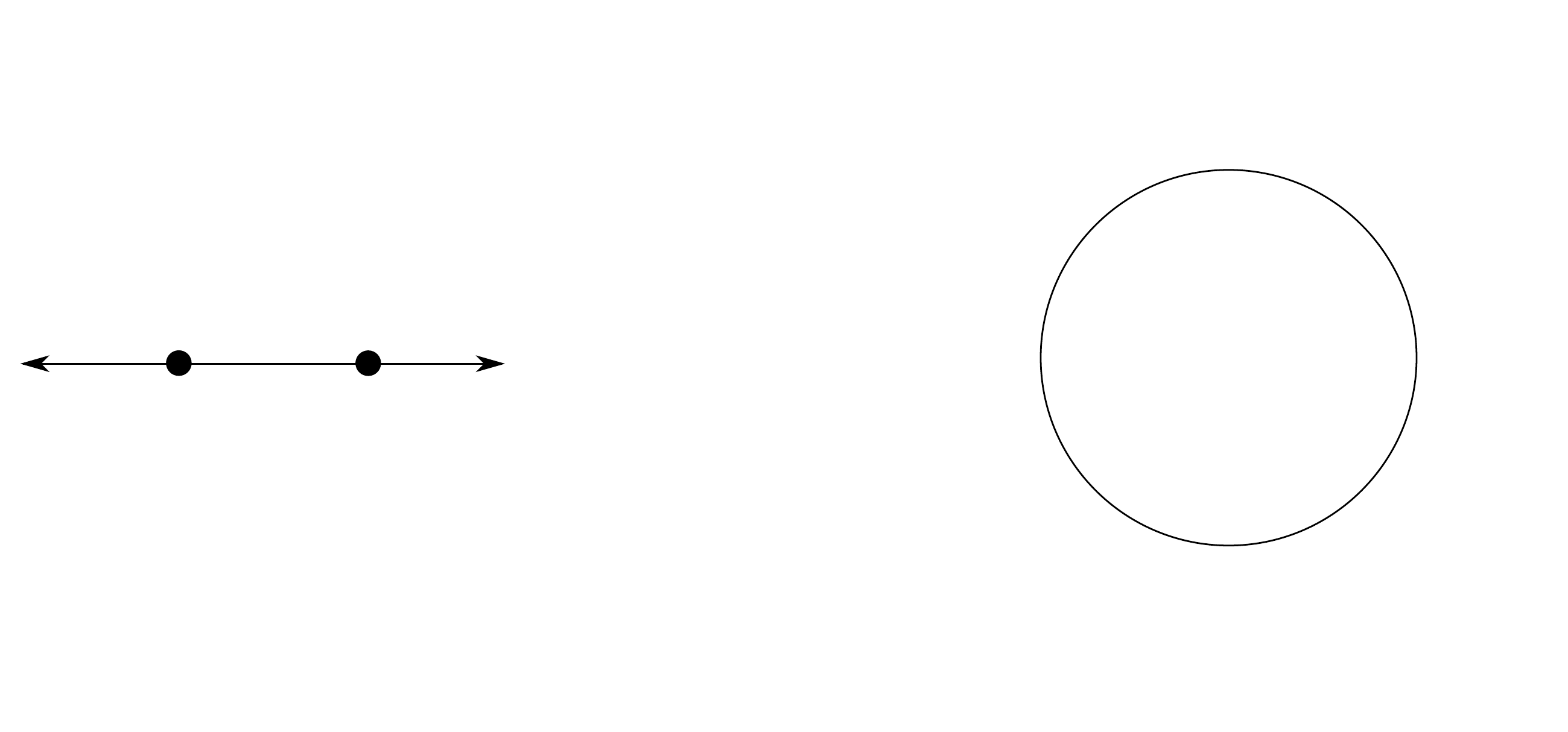}
\captionsetup{width=.9\textwidth}
\caption{Illustrated is the conjugacy $\mathcal{E}:\hat{\mathbb{R}}\rightarrow \mathbb{T}$ between $\rho: \hat{\mathbb{R}}\rightarrow \hat{\mathbb{R}}$  and the map $z\mapsto z^2$ on $\mathbb{T}$. Also marked are the relevant Markov partitions.  }
\label{fig:Conjugacy}
\end{figure}


\begin{rem}\label{setting_up_equiv_reltn} Let $p\in\textrm{Pol}_2$ such that $\mathcal{J}(p)$ is connected and locally connected. Denote by $\phi_p: \mathbb{D}^*\rightarrow \mathcal{B}_\infty(p)$ the B\"ottcher coordinate for $p$ such that $\phi_p'(\infty)=1$. We note that since $\partial K(p)=\mathcal{J}(p)$ is locally connected by assumption, it follows that $\phi_p$ extends to a semi-conjugacy between $z\mapsto z^{2}|_{\mathbb{T}}$ and $p|_{\mathcal{J}(p)}$.
 \end{rem}

\begin{definition}\label{conf_mating_equiv_reltn} 
Let notation be as in Remark~\ref{setting_up_equiv_reltn}. We define an equivalence relation $\sim$ on $\overline{\mathbb{H}} \sqcup K(p)$ by specifying $\sim$ is generated by $t\sim\phi_p\circ\mathcal{E}(t)$ for all $t\in\hat{\mathbb{R}}$.
\end{definition}

\begin{definition}\label{mating} Let $p\in\textrm{Pol}_2$ such that $\mathcal{J}(p)$ is connected and locally connected, and $f\in\mathcal{R}$. We say that $\sigma_f$ is a \emph{conformal mating} of $\Gamma$ with $p$ if there exist continuous mappings \[ \psi_p: K(p) \rightarrow K(\sigma_f) \textrm{ and } \psi_\Gamma: \overline{\mathbb{H}} \rightarrow \overline{I(\sigma_f)}  \] conformal on $\mathcal{F}(p)$, $\mathbb{H}$, respectively, such that \begin{enumerate} \item $\psi_p\circ p(w)=\sigma_f\circ\psi_p(w)$ for $w\in K(p)$, \item$\psi_\Gamma(U)=T_f$ with $\psi_\Gamma\circ \rho(z)=\sigma_f\circ\psi_\Gamma(z)$ for $z\in \overline{\mathbb{H}}\setminus U$, \item $\psi_\Gamma(z)=\psi_p(w) \textrm{ if and only if } z\sim w$ where $\sim$ is as in Definition \ref{conf_mating_equiv_reltn}. \end{enumerate}
\end{definition}

\section{Topology of the Filled Julia set}\label{topology}

\begin{notation} We will denote \[ f_0(z):=\frac{z}{1+z^3/2}, \] and $\sigma_0:=\sigma_{f_0}$. We let $A_0$ denote the basin of attraction of $0$ for $\sigma_0$.
\end{notation}

\begin{rem}\label{rel_with_LLMM1} One readily verifies the identity \begin{equation}\label{base_point} \frac{1}{f_0(1/z)}=z+\frac{1}{2z^2}. \end{equation} Let $\sigma_\textrm{inv}$ denote the Schwarz function of (\ref{base_point}). The Schwarz-\emph{reflection} map of (\ref{base_point}) is defined exactly as in Definition \ref{sigma_definition_larger}, but with the map $z\mapsto 1/\overline{z}$ replacing the map $M$. It is straightforward to verify then that the Schwarz-reflection map of (\ref{base_point}) is \begin{equation}\label{schwarz_reflection}z\mapsto\overline{\sigma_{\textrm{inv}}(z)}. \end{equation} The dynamics of (\ref{schwarz_reflection}) were studied in detail in \cite{LLMM1}. The proof of Theorem \ref{local_connectivity} below follows directly from results of \cite{LLMM1}. The rest of the results in this Section can also be deduced directly from results in \cite{LLMM1}, but we include proofs (mimicking those of \cite{LLMM1}) for the sake of the reader.  


\end{rem}


\begin{prop}\label{connectivity_of_basin} $A_0$ is connected.
\end{prop}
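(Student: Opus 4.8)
The plan is to realize $A_0$ as the univalent image of $\mathbb{D}$ under a Böttcher coordinate, the only real work being to upgrade this from the immediate basin to the full basin. First I would record the data of $f_0$. Differentiating gives $f_0'(z)=(1-z^3)/(1+z^3/2)^2$, so the critical points of $f_0$ are the three cube roots of unity $1,e^{2\pi i/3},e^{4\pi i/3}$ together with $\infty$ (where $f_0(z)\sim 2/z^2$). Taking $c_1=1$, $c_2=e^{2\pi i/3}$, $c_3=e^{4\pi i/3}$ one has $D_{c_1,c_2,c_3}=\mathbb{D}$, and by the identity $1/f_0(1/z)=z+1/(2z^2)$ of Remark~\ref{rel_with_LLMM1} together with the classical univalence of $z\mapsto z+1/(2z^2)$ on $\{|z|>1\}$, the map $f_0$ is univalent on $\overline{\mathbb{D}}$; hence $f_0\in\mathcal{R}$ with remaining critical point $c_{f_0}=\infty$. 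Since $c_2=\overline{c_3}$ and $c_1=1$, the Möbius map $M$ of Definition~\ref{sigma_definition_larger} is $M(z)=1/z$.

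Next I would check that $0$ is a superattracting fixed point of $\sigma_0$ carrying the unique critical point. Because $f_0^{-1}(0)=0$, $M(0)=\infty$, and $f_0(\infty)=0$, the diagram defining $\sigma_0$ gives $\sigma_0(0)=0$; and a local computation from $f_0(z)=z-\tfrac12 z^4+O(z^7)$ near $0$ and $f_0(Z)\sim 2/Z^2$ near $\infty$ yields $\sigma_0(w)=2w^2+O(w^3)$, so $\sigma_0'(0)=0$ and $0$ is superattracting of local degree $2$. Moreover, by Proposition~\ref{first_covering_property} the unique critical point of $\sigma_0$ is $f_0\circ M(c_{f_0})=f_0(M(\infty))=f_0(0)=0$; that is, the only critical point of $\sigma_0$ coincides with the superattracting fixed point.

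With this in hand, the main step is to pass from the immediate basin to the full basin. By Theorem~\ref{bottcher_coord} the Böttcher coordinate is a conformal isomorphism of the immediate basin $\mathcal{A}_0$ onto $\mathbb{D}$, so $\mathcal{A}_0$ is simply connected. I would then prove $\sigma_0^{-1}(\mathcal{A}_0)=\mathcal{A}_0$, whence $A_0=\bigcup_{n\geq0}\sigma_0^{-n}(\mathcal{A}_0)=\mathcal{A}_0$ by induction. For the identity, note $\mathcal{A}_0\subset K(\sigma_0)\subset\overline{f_0(\mathbb{D})}$ (since $I(\sigma_0)\supset T_0$), so the preimage is taken under the degree $2$ branched cover $\sigma_0:U\to f_0(\mathbb{D})$ of Proposition~\ref{first_covering_property}, whose only branch point lies over $\sigma_0(0)=0\in\mathcal{A}_0$. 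A Riemann--Hurwitz count exactly as in the proof of Proposition~\ref{conn_locus} gives $\chi(\sigma_0^{-1}(\mathcal{A}_0))=2\chi(\mathcal{A}_0)-1=1$, so $\sigma_0^{-1}(\mathcal{A}_0)$ is connected; as it is contained in $A_0$ and contains the whole component $\mathcal{A}_0$, maximality of connected components forces $\sigma_0^{-1}(\mathcal{A}_0)=\mathcal{A}_0$. Hence $A_0=\mathcal{A}_0$ is connected (indeed simply connected).

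The main obstacle is precisely this last step: Böttcher's theorem only manufactures a disc out of the immediate basin, and one must rule out further components of the basin splitting off under pullback. This is exactly where the placement of the unique critical point at the fixed point $0$ is essential, since it makes every pullback of $\mathcal{A}_0$ connected (branched at worst over $0\in\mathcal{A}_0$). Alternatively one may simply quote Theorem~\ref{bottcher_coord} in the form stated there, namely as a conformal isomorphism of the full basin $\mathcal{A}(0)$ onto $\mathbb{D}$, which gives connectivity at once; the Riemann--Hurwitz argument above is the self-contained version, mirroring \cite{LLMM1} as indicated in Remark~\ref{rel_with_LLMM1}.
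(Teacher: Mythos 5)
Your argument is correct and rests on the same mechanism as the paper's own proof: by Proposition \ref{first_covering_property}, $\sigma_0$ is a degree-$2$ branched cover whose unique critical point $f_0(M(\infty))=0$ coincides with the superattracting fixed point, so pulling back the immediate basin cannot create new components. The paper packages this as a two-line count ($0$ is a preimage of $0$ of multiplicity two, and $0$ has only two preimages in total, so any extra component mapping onto the immediate basin would need a nonexistent preimage of $0$), whereas you run the equivalent Riemann--Hurwitz computation; the only caveat is that your ``alternative'' of quoting Theorem \ref{bottcher_coord} for the full basin would be circular, since its proof via Milnor's Theorem 9.3 only yields the immediate basin---a point you already flag yourself.
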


\begin{proof} Suppose by way of contradiction that $A_0$ contains more than one component. It must be then that  $A_0$ contains a Fatou component $U\not=A_0$ such that $\sigma(U)=A_0$. But $0$ is a preimage of $0$ (under $\sigma$) of multiplicity two, and $0$ has only two preimages (counted with multiplicity) under $\sigma_0$ by Proposition \ref{first_covering_property}. Thus we have reached a contradiction. 





\end{proof}

\begin{thm}\label{local_connectivity} $\partial I(\sigma_0)$ is locally connected.
\end{thm}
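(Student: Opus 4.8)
The plan is to reduce the statement to the local connectivity of the boundary of a single basin of attraction, and then to import the corresponding fact about the deltoid Schwarz reflection from \cite{LLMM1} via the conjugacy recorded in Remark~\ref{rel_with_LLMM1}. First I would pin down the local model of $\sigma_0$ at the origin. Since $f_0(0)=0$ and, with $M(z)=1/z$ (which fixes $c_1=1$ and swaps $c_2=e^{2\pi i/3}$, $c_3=e^{4\pi i/3}$), the remaining critical point is $c_f=\infty$, Proposition~\ref{first_covering_property} shows $\sigma_0$ is branched precisely at $f_0\circ M(c_f)=f_0(0)=0$, and $\sigma_0(0)=0$. Hence $0$ is a superattracting fixed point. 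By Theorem~\ref{bottcher_coord} the B\"ottcher coordinate gives a conformal conjugacy from $(\sigma_0, A_0)$ to $(z\mapsto z^2, \mathbb{D})$, and by Proposition~\ref{connectivity_of_basin} the basin $A_0$ is connected. Because $0$ is the only critical point of $\sigma_0$ (again Proposition~\ref{first_covering_property}) and $c_f=\infty\notin I(\sigma_0)$, Proposition~\ref{conn_locus} gives that $K(\sigma_0)$ is connected; as there are then no non-escaping Fatou components other than $A_0$, one has $K(\sigma_0)=\overline{A_0}$ and therefore $\partial I(\sigma_0)=\mathcal{J}(\sigma_0)=\partial A_0$. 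Thus it suffices to prove that $\partial A_0$ is locally connected, equivalently that the inverse B\"ottcher map $\mathbb{D}\to A_0$ extends continuously to $\overline{\mathbb{D}}$.

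Next I would exploit the symmetry relations of Remark~\ref{rel_with_LLMM1}. Writing $g(z):=z+1/(2z^2)$ and $\iota(z):=1/z$, the identity \eqref{base_point} says $f_0=\iota\circ g\circ\iota$, so $\iota$ conjugates $\sigma_0$ to the Schwarz function $\sigma_{\textrm{inv}}$ of the exterior-of-deltoid domain $g(\mathbb{D}^*)$, namely $\sigma_0=\iota\circ\sigma_{\textrm{inv}}\circ\iota$. Since $g$ has real coefficients, $\sigma_{\textrm{inv}}$ commutes with complex conjugation $\kappa(z):=\overline{z}$, so the anti-holomorphic Schwarz reflection $R:=\overline{\sigma_{\textrm{inv}}}=\kappa\circ\sigma_{\textrm{inv}}$ satisfies $R^2=\sigma_{\textrm{inv}}^2$ and shares with $\sigma_{\textrm{inv}}$ the same escaping/non-escaping partition and the same basin boundaries. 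Because $\iota$ is a conformal automorphism of $\widehat{\mathbb{C}}$, it preserves local connectivity of boundaries; so under $\iota$ the set $\partial A_0$ corresponds exactly to the boundary of the non-escaping set of the deltoid reflection $R$ studied in \cite{LLMM1}.

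I would then invoke the local connectivity conclusion of \cite{LLMM1}: the deltoid Schwarz reflection is geometrically finite and is realized there as a conformal mating of $z\mapsto z^2$ with the relevant reflection group, and the boundary of its non-escaping set is shown to be locally connected. Transporting this statement back through the conformal map $\iota$ yields that $\partial A_0=\partial I(\sigma_0)$ is locally connected, completing the proof.

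The hard part is the bookkeeping at the parabolic cusps. Unlike $z^2$, whose Julia set is a round circle, $\sigma_0$ inherits parabolic points from the cusp of $\Gamma$ (equivalently, the cusps of the deltoid), so $A_0$ is not a quasidisk and the continuous extension of the B\"ottcher coordinate must be established despite the cusps. This is precisely the point controlled in \cite{LLMM1} through geometric finiteness — one shows the basin is a John domain away from the parabolic cusps and that the cusps are accessible with locally connected boundary — and my plan is to rely on that analysis rather than reproduce it, using only the conjugacies above to match the two dynamical pictures.
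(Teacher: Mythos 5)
Your proposal is correct and follows essentially the same route as the paper: conjugate $\sigma_0$ by $z\mapsto 1/z$ to the Schwarz function $\sigma_{\textrm{inv}}$ of $z+1/(2z^2)$, observe that its escaping set coincides with that of the anti-holomorphic reflection $\overline{\sigma_{\textrm{inv}}}$, import local connectivity from \cite{LLMM1} (the paper cites Lemma 5.12 there), and transport back through the M\"obius map. Your opening reduction to $\partial A_0$ is superfluous for this argument (the \cite{LLMM1} result already concerns the boundary of the escaping set directly), and its claim that $A_0$ is the only non-escaping Fatou component is not justified at this stage --- in the paper the identity $\partial I(\sigma_0)=\partial A_0$ is Proposition \ref{common_boundary}, proved \emph{after} and \emph{using} Theorem \ref{local_connectivity}.
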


\begin{proof} Consider the Schwarz reflection map $\overline{\sigma_{\textrm{inv}}}$ as in Remark \ref{rel_with_LLMM1}. We have the relation \[ I(\overline{\sigma_{\textrm{inv}}}) = I(\sigma_{\textrm{inv}}). \] Thus Lemma 5.12 of \cite{LLMM1} applies directly to show that $\partial I(\overline{\sigma_{\textrm{inv}}})$, and hence $\partial I(\sigma_{\textrm{inv}})$, is locally connected. Since $\partial I(\sigma_0)$ is the image of $\partial I(\sigma_{\textrm{inv}})$ under $z\mapsto 1/z$, the Theorem follows.


\end{proof}



\begin{prop}\label{common_boundary} $ \partial I(\sigma_0)=  \partial A_0 $.
\end{prop}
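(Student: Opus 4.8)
The plan is to prove the two inclusions separately, using throughout that $I(\sigma_0)$ is open (Proposition~\ref{openess_of_escaping_set}) while $K(\sigma_0)=\hat{\mathbb C}\setminus I(\sigma_0)$ is closed, so that $\partial I(\sigma_0)=\partial K(\sigma_0)=\mathcal J(\sigma_0)$, and that $A_0\subseteq K(\sigma_0)$ with $A_0$ open and connected (Proposition~\ref{connectivity_of_basin}). The inclusion $\partial A_0\subseteq\partial I(\sigma_0)$ is the elementary direction: since $K(\sigma_0)$ is closed we have $\overline{A_0}\subseteq K(\sigma_0)$, whence $\partial A_0\subseteq K(\sigma_0)$; and if some $x\in\partial A_0$ were to lie in $\mathcal F(\sigma_0)=\mathrm{int}\,K(\sigma_0)$, then the Fatou component containing $x$ would meet the open set $A_0$ and hence, $A_0$ being a single Fatou component by Proposition~\ref{connectivity_of_basin}, would equal $A_0$, forcing $x\in A_0$ and contradicting $x\in\partial A_0$. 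Thus $\partial A_0\subseteq\mathcal J(\sigma_0)=\partial I(\sigma_0)$.

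For the reverse inclusion it suffices to show $\mathcal J(\sigma_0)\subseteq\overline{A_0}$, since then each $x\in\partial I(\sigma_0)=\mathcal J(\sigma_0)$ lies in $\overline{A_0}\setminus A_0=\partial A_0$ (using $A_0\subseteq\mathcal F(\sigma_0)$, which is disjoint from $\mathcal J(\sigma_0)$ by Proposition~\ref{compactness_of_K}). I would first establish that $\mathcal F(\sigma_0)=A_0$. The set $A_0$ is open and, directly from the definition $A_0=\{z:\sigma_0^n(z)\to 0\}$, completely invariant (if $\sigma_0(z)\in A_0$ then $\sigma_0^{n+1}(z)\to 0$, so $z\in A_0$). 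Moreover $A_0$ is relatively closed in $\mathcal F(\sigma_0)$: if $x\in\mathcal F(\sigma_0)$ is a limit of points of $A_0$, then by normality of $(\sigma_0^n)$ on $\mathcal F(\sigma_0)$ (Corollary~\ref{normality}) some subsequence $\sigma_0^{n_k}$ converges locally uniformly near $x$ to a holomorphic limit $g$; on the nonempty open set $N\cap A_0$ (where $A_0$ accumulates) one has $g=0$, so $g\equiv 0$ by the identity theorem, giving $\sigma_0^{n_k}(x)\to 0$ and hence $x\in A_0$ by complete invariance. Thus $A_0$ is a nonempty, open, relatively closed subset of $\mathcal F(\sigma_0)$, and it equals $\mathcal F(\sigma_0)$ provided the latter is connected.

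The remaining input, which gives both the connectivity of $\mathcal F(\sigma_0)$ and the inclusion $\mathcal J(\sigma_0)\subseteq\overline{A_0}$, is the structure of $\partial I(\sigma_0)$. Here local connectivity (Theorem~\ref{local_connectivity}) together with the group uniformization $\phi:\mathbb H\to I(\sigma_0)$ of Theorem~\ref{group_conjugacy} enters: local connectivity lets $\phi$ extend continuously to $\overline{\mathbb H}$, exhibiting $\overline{I(\sigma_0)}$ as the image of a closed topological disc with $\partial I(\sigma_0)$ the image of $\hat{\mathbb R}$, the boundary dynamics being conjugate via $\mathcal E$ of Remark~\ref{mark_part_rem} to $z\mapsto z^2$ on $\mathbb T$. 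Since the boundary model $z\mapsto z^2$ has no ray identifications, the continuous boundary extension of $\phi$ is injective, so $\overline{I(\sigma_0)}$ is a closed Jordan domain and $\mathcal F(\sigma_0)=\hat{\mathbb C}\setminus\overline{I(\sigma_0)}$ is a Jordan domain. This makes $\mathcal F(\sigma_0)$ connected (so that $A_0=\mathcal F(\sigma_0)$) and gives $\partial\mathcal F(\sigma_0)=\partial I(\sigma_0)=\mathcal J(\sigma_0)$; combined with $\mathcal F(\sigma_0)=A_0$ this yields $\mathcal J(\sigma_0)=\partial\mathcal F(\sigma_0)\subseteq\overline{A_0}$, completing the proof. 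I expect the main obstacle to be exactly this structural upgrade — passing from the local connectivity of Theorem~\ref{local_connectivity} to the statement that $\partial I(\sigma_0)$ is an unpinched Jordan curve, equivalently that no stray non-escaping Fatou components appear between $A_0$ and the escaping set. This is the point where I would lean on the detailed analysis of the associated Schwarz reflection map in \cite{LLMM1}, as indicated in Remark~\ref{rel_with_LLMM1}.
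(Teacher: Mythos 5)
Your first inclusion $\partial A_0\subseteq\partial I(\sigma_0)$ is fine and is essentially the paper's own normality argument (the paper runs it in the contrapositive: a point of $\partial A_0$ lying in the Fatou set would have a whole neighborhood on which the iterates converge to $0$, forcing it into the interior of $A_0$). The problem is the second inclusion. Your entire reduction hinges on the claim that the Carath\'eodory extension of $\phi:\mathbb{H}\to I(\sigma_0)$ to $\hat{\mathbb{R}}$ is injective, i.e.\ that $\partial I(\sigma_0)$ is a Jordan curve, and the justification offered --- ``the boundary model $z\mapsto z^2$ has no ray identifications'' --- is not an argument. A continuous surjection of $\mathbb{T}$ (or $\hat{\mathbb{R}}$) onto a locally connected boundary that semi-conjugates the model dynamics to the boundary dynamics can perfectly well fail to be injective: this is exactly what happens for every quadratic polynomial with connected, locally connected, non-Jordan Julia set (the basilica, say), where the identifications form a nontrivial invariant lamination. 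Whether the lamination is trivial is a property of $\sigma_0$ that must be proved, not read off from the circle model. Worse, within this paper the Jordan-curve statement is Proposition~\ref{Jordan_curve}, whose proof \emph{uses} Proposition~\ref{common_boundary}: a nontrivial identification would produce a cut point whose complementary rays separate $\partial I(\sigma_0)$, and the contradiction with connectivity of $A_0$ is obtained precisely because every point of $\partial I(\sigma_0)$ is accumulated by $A_0$. So your route is circular unless you import the Jordan property wholesale from \cite{LLMM1} --- which is exactly the step you flag as the ``main obstacle'' and leave unproved.

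The paper's actual argument for $\partial I(\sigma_0)\subseteq\partial A_0$ needs only surjectivity, not injectivity, of the boundary extension. One first exhibits a single explicit common boundary point: the computation $\sigma_0(f_0(x))<f_0(x)$ for $0<x<1$ shows $f_0((0,1))\subset A_0$, hence $f_0(1)\in\partial A_0\cap\partial I(\sigma_0)$. Since $\phi(\infty)=f_0(1)$ and $\phi$ conjugates $\rho$ to $\sigma_0$ on $\hat{\mathbb{R}}$, every point $\phi(x)$ with $\rho^n(x)=\infty$ is an iterated $\sigma_0$-preimage of $f_0(1)$ and hence lies in $\partial A_0$ by invariance; such $x$ are dense in $\hat{\mathbb{R}}$, $\phi|_{\hat{\mathbb{R}}}$ is a continuous surjection onto $\partial I(\sigma_0)$ by local connectivity and Carath\'eodory, and $\partial A_0$ is closed. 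To salvage your approach you would need an independent proof that the lamination is trivial; otherwise the preimage-density argument is the one that closes the gap.
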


\begin{proof} Let $z\in \partial A_0$. Suppose by way of contradiction that $z\not \in \partial I(\sigma_0)$. Then all iterates $\sigma_0^n$ are defined, and normal, in some neighborhood $U$ of $z$. The family $\sigma^n_0|_{U\cap A_0}$ converges uniformly on compact subsets to the constant $0$, and so the same holds true of $\sigma^n_0|_{U}$. Thus $z\in U\subset A_0$, and this is a contradiction.

We now prove that $ \partial I(\sigma_0) \subset  \partial A_0$. Let $0<x<1$, and note that: \[ \sigma_0\left( \frac{x}{1+x^3/2} \right) = \frac{x^2}{x^3+1/2} < \frac{x}{1+x^3/2}  \] where the inequality is easily checked. It follows that $f((0,1))\subset A_0$. Thus $f(1) \in \partial I(\sigma_0)$. Let $\phi: \mathbb{H} \rightarrow I_0$ denote the conformal map of Theorem \ref{group_conjugacy}. By Theorem \ref{local_connectivity} and Carath\'eodory's Theorem (see Theorem 17.14 of \cite{MR2193309}), $\phi$ extends continuously to a surjective map $\phi: \hat{\mathbb{R}} \rightarrow \partial I_0$.  One has $\phi(\infty)=f(1)$ by definition of $\phi$, and $\rho(x) = \phi^{-1}\circ\sigma_f\circ\phi(x)$ for $x\in\mathbb{R}$. Thus if $x\in\mathbb{R}$ is such that $\rho^n(x)=\infty$ for $n>0$, then $\sigma_f^n\circ\phi(x)=f(1)$, and so we also have $\phi(x)\in \partial A_0$. As $\{ x\in\mathbb{R} : \rho^n(x)=\infty \textrm{ for some } n>0\}$ is dense in $\mathbb{R}$,  it follows that $ \partial I(\sigma_0) \subset  \partial A_0$.
\end{proof}


\begin{prop}\label{Jordan_curve} $\partial I(\sigma_0)$ is a Jordan curve.
\end{prop}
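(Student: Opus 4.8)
The plan is to show that $\partial I(\sigma_0)$ is a Jordan curve by combining the local connectivity from Theorem \ref{local_connectivity} with the conformal model provided by Theorem \ref{group_conjugacy}. Recall that Theorem \ref{group_conjugacy} gives a conformal map $\phi: \mathbb{H} \rightarrow I(\sigma_0)$ conjugating $\rho$ on $\mathbb{H}$ to $\sigma_0$ on $I(\sigma_0)$. Since $\partial I(\sigma_0)$ is locally connected, Carath\'eodory's theorem (invoked already in the proof of Proposition \ref{common_boundary}) gives a continuous extension $\phi: \hat{\mathbb{R}} \rightarrow \partial I(\sigma_0)$ which is surjective. The essential task is to upgrade this boundary extension from a continuous surjection to a \emph{homeomorphism}, i.e. to prove $\phi|_{\hat{\mathbb{R}}}$ is injective; this would immediately show $\partial I(\sigma_0)$ is a Jordan curve as the homeomorphic image of the circle $\hat{\mathbb{R}} = \partial\mathbb{H}$.

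First I would set up the combinatorial bookkeeping. The map $\rho: \hat{\mathbb{R}}\rightarrow\hat{\mathbb{R}}$ carries the Markov partition of Remark \ref{mark_part_rem}, and via the conjugacy $\mathcal{E}$ it is identified with the doubling map $z\mapsto z^2$ on $\mathbb{T}$. Injectivity of $\phi$ on $\hat{\mathbb{R}}$ can fail only if two distinct boundary points $s,t\in\hat{\mathbb{R}}$ satisfy $\phi(s)=\phi(t)$; such a coincidence would force an identification of prime ends, i.e. a ``pinch point'' in $\partial I(\sigma_0)$. The strategy is to rule out pinching. I would do this dynamically: any identification $\phi(s)=\phi(t)$ must be $\sigma_0$-invariant in the sense that $\phi(\rho(s))=\phi(\rho(t))$, so a nontrivial identification propagates under iteration of $\rho$, and by the expanding/Markov structure one can pull the two points forward until their $\rho$-orbits land in distinct partition pieces, separated by the fundamental tile $T_f$. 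Because the closures of the tiles $\rho^{-n}(\overline{D})$ intersect only along their common boundary arcs (as established in the proof of Theorem \ref{group_conjugacy}, where $\phi$ is shown to glue conformally along $x=\pm1$ and the unit semicircle), the images under $\phi$ of the two points cannot coincide unless $s=t$.

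The cleanest route may be to exploit the identification with the Schwarz reflection dynamics of \cite{LLMM1} once more, paralleling the proof of Theorem \ref{local_connectivity}: since $\partial I(\sigma_0)$ is the image under $z\mapsto 1/z$ of $\partial I(\sigma_{\textrm{inv}})$, and the latter was analyzed in \cite{LLMM1}, I would check that the corresponding boundary in \cite{LLMM1} is shown there to be a Jordan curve (the relevant statement concerns the non-self-intersection of the limit set / boundary of the escaping set for the deltoid-type Schwarz reflection). The inversion $z\mapsto 1/z$ is a homeomorphism of $\hat{\mathbb{C}}$, so it preserves the Jordan property, and the result transfers directly to $\partial I(\sigma_0)$.

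The main obstacle I anticipate is establishing injectivity rigorously, i.e. genuinely ruling out pinch points. Local connectivity and surjectivity of the Carath\'eodory extension are comparatively routine, but a continuous surjection from $\hat{\mathbb{R}}$ onto a locally connected continuum need not be injective in general (the boundary could have touching points producing a ``figure-eight'' or cauliflower-type structure). The work lies in verifying that no two prime ends of $I(\sigma_0)$ are identified. I expect the key input to be that $\sigma_0$ has a single attracting fixed point at $0$ whose basin $A_0$ is connected (Proposition \ref{connectivity_of_basin}) and whose boundary coincides with $\partial I(\sigma_0)$ (Proposition \ref{common_boundary}): the absence of a second Fatou component or a parabolic/repelling structure that would create a pinch is precisely what forces the boundary to be an embedded circle rather than merely a continuous image of one. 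I would therefore organize the proof around showing that any prime-end identification would produce either an extra critical orbit or a disconnection of $A_0$, contradicting Propositions \ref{connectivity_of_basin} and \ref{first_covering_property}.
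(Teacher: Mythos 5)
Your closing paragraph lands on exactly the argument the paper uses: a prime-end identification is ruled out because it would disconnect $A_0$, via Propositions \ref{common_boundary} and \ref{connectivity_of_basin}. But you stop at announcing this and never supply the mechanism, which is the entire content of the proof. The paper's argument is short and purely topological, with no dynamics at all: take any Riemann map $\phi:\hat{\mathbb{C}}\setminus\overline{\mathbb{D}}\to I(\sigma_0)$, extend it continuously to $\mathbb{T}$ by local connectivity and Carath\'eodory, and suppose $\phi(z_1)=\phi(z_2)$ with $z_1\neq z_2$. Then the set $\{\phi(tz_1):t\geq1\}\cup\{\phi(tz_2):t\geq1\}\cup\{\phi(z_1)\}$ is a Jordan curve contained in $I(\sigma_0)\cup\{\phi(z_1)\}$, hence disjoint from $A_0$, and it separates $\partial I(\sigma_0)\setminus\{\phi(z_1)\}$ into two nonempty pieces. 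Since every point of $\partial I(\sigma_0)$ is a limit of points of $A_0$ by Proposition \ref{common_boundary}, the curve separates $A_0$ itself, contradicting Proposition \ref{connectivity_of_basin}. You should write this down explicitly; ``an identification would produce a disconnection of $A_0$'' is the claim to be proved, not a proof of it.

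The other two routes you float are weaker. The Markov-partition argument as sketched does not close: pulling $s,t$ forward until their orbits lie in distinct partition pieces proves nothing, because adjacent tiles share boundary arcs, and distinct prime ends whose itineraries eventually differ can still land at the same point --- this is exactly what happens for quadratic polynomials with locally connected but non-Jordan Julia sets (e.g. the basilica), where the boundary map carries the very same doubling/Markov structure. Any no-pinching argument must therefore use something specific to $\sigma_0$, and that something is precisely the connectivity of $A_0$; so this route collapses back into your third one. The citation route is legitimate in principle --- the author concedes in Remark \ref{rel_with_LLMM1} that the results of this section can be deduced from \cite{LLMM1} --- but you would need to locate and verify the precise Jordan-curve statement there rather than posit its existence, and the paper deliberately gives the self-contained argument instead.
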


\begin{proof} By Theorem \ref{local_connectivity}, $\partial I(\sigma_0)$ is locally connected. Thus any Riemann map $\phi: \hat{\mathbb{C}}\setminus\mathbb{D} \rightarrow I(\sigma_0)$  extends continuously to a surjective map $\phi: \mathbb{T} \rightarrow \partial I(\sigma_0)$ by Carath\'eodory's Theorem. Suppose by way of contradiction that $\phi(z_1)=\phi(z_2)$ for $z_1\not=z_2$ where $z_1, z_2\in\mathbb{T}$. Then $\phi(z_1)=\phi(z_2)$ is a cut-point of $I(\sigma_0)$, and the curve \begin{equation}\label{separation} \{\phi(tz_1) : t\geq1\} \cup  \{\phi(tz_2) : t\geq1\} \cup \{ \phi(z_1) \}\end{equation} separates $\partial I(\sigma_0)\setminus\{\phi(z_1)\}$. If $z\in\partial I(\sigma_0)\setminus\{\phi(z_1)\}$, then any neighborhood of $z$ intersects $A_0$ by Proposition \ref{common_boundary}. Thus (\ref{separation}) also constitutes a separation of $A_0$, which contradicts connectivity of $A_0$ (Proposition \ref{connectivity_of_basin}).
\end{proof}

\begin{cor}\label{summary} $\mathcal{J}(\sigma_0)$ is a Jordan curve, and $\mathcal{J}(\sigma_0) = \partial I(\sigma_0) = \partial A_0 $.
\end{cor}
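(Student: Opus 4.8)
The plan is to recognize that this corollary is a synthesis of the three preceding propositions, the only genuinely new ingredient being an elementary topological identity relating the boundaries of $K(\sigma_0)$ and $I(\sigma_0)$.

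First I would unwind the definitions. By Definition \ref{Julia_Fatou} we have $\mathcal{J}(\sigma_0) = \partial K(\sigma_0)$, and by the same definition $K(\sigma_0) = \hat{\mathbb{C}} \setminus I(\sigma_0)$. In any topological space a set and its complement share the same topological boundary (both equal $\overline{K(\sigma_0)} \cap \overline{I(\sigma_0)}$), so it follows immediately that
\[ \mathcal{J}(\sigma_0) = \partial K(\sigma_0) = \partial I(\sigma_0). \]
(Here $I(\sigma_0)$ is open by Proposition \ref{openess_of_escaping_set}, so $K(\sigma_0)$ is genuinely closed, although this is not strictly needed for the boundary identity itself.) Next I would invoke Proposition \ref{common_boundary}, which gives $\partial I(\sigma_0) = \partial A_0$, thereby completing the chain of equalities $\mathcal{J}(\sigma_0) = \partial I(\sigma_0) = \partial A_0$. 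Finally, Proposition \ref{Jordan_curve} asserts that $\partial I(\sigma_0)$ is a Jordan curve; combined with the equality $\mathcal{J}(\sigma_0) = \partial I(\sigma_0)$ just obtained, this shows that $\mathcal{J}(\sigma_0)$ is a Jordan curve, which is the remaining claim.

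I do not expect any substantial obstacle here: all of the analytic work—local connectivity of $\partial I(\sigma_0)$ via the results of \cite{LLMM1} (Theorem \ref{local_connectivity}), connectivity of $A_0$ (Proposition \ref{connectivity_of_basin}), the coincidence $\partial I(\sigma_0)=\partial A_0$ (Proposition \ref{common_boundary}), and the no-cut-point argument yielding injectivity of the boundary extension of the Riemann map (Proposition \ref{Jordan_curve})—has already been carried out. The corollary merely records that $\mathcal{J}(\sigma_0)$, defined abstractly as $\partial K(\sigma_0)$, coincides with the concretely analyzed set $\partial I(\sigma_0) = \partial A_0$. The single point meriting a moment of care is the formal identity $\partial K(\sigma_0) = \partial I(\sigma_0)$, which holds precisely because $K(\sigma_0)$ and $I(\sigma_0)$ are complementary.
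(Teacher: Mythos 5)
Your proposal is correct and follows the paper's own proof essentially verbatim: both deduce $\mathcal{J}(\sigma_0)=\partial K(\sigma_0)=\partial I(\sigma_0)$ from complementarity, then cite Proposition \ref{Jordan_curve} for the Jordan curve claim and Proposition \ref{common_boundary} for $\partial I(\sigma_0)=\partial A_0$. No differences worth noting.
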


\begin{proof} Since $\mathcal{J}(\sigma_0):=\partial K(\sigma_0)$ and $ K(\sigma_0)= \hat{\mathbb{C}}\setminus I(\sigma_0)$, it follows that $\mathcal{J}(\sigma_0)=\partial I(\sigma_0)$ and so $\mathcal{J}(\sigma_0)$ is a Jordan curve by Proposition \ref{Jordan_curve}. That $\partial I(\sigma_0) = \partial A_0 $ is the statement of Proposition \ref{common_boundary}.
\end{proof}




\section{Quasiconformal Surgery}\label{qc_surgery_sec}

\begin{rem}\label{Blaschke_product} In this remark, we summarize the discussion in Section 4.2.1 of \cite{BF14}, to which we refer for more details. We let \[ B_\lambda(z):= z\frac{z+\lambda}{1+\overline{\lambda}z}\textrm{, where }\lambda\in\mathbb{D}. \]

\noindent Let $\lambda\in\mathbb{D}\setminus\{0\}$, and choose $r>|\lambda|$. Let $h_\lambda$ be a quasiregular interpolation defined on $\mathbb{D}_{\sqrt{r}} \setminus \mathbb{D}_r$ between the degree 2 maps $B_\lambda|_{|z|=r}$ and $z\mapsto z^2|_{|z|=\sqrt{r}}$. We define \begin{align}\label{quasiregular_def} 
g_\lambda(z) := \left\{
        \begin{array}{ll}
        z^2 & \quad \textrm{ if } z \in \mathbb{D}\setminus \mathbb{D}_{\sqrt{r}}, \\            
            h_\lambda(z) & \quad \textrm{ if } z \in \mathbb{D}_{\sqrt{r}} \setminus \mathbb{D}_r, \\
            B_\lambda(z) & \quad \textrm{ if } z \in \mathbb{D}_r.
        \end{array}
    \right.
 \end{align} We define $\mu_\lambda\in L^\infty(\mathbb{D}_{\sqrt{r}})$ by $\mu_\lambda(z)=0$ for $z\in\mathbb{D}_r$ and $\mu_\lambda\equiv(h_\lambda)_{\overline{z}}/(h_\lambda)_{z}$ in $\mathbb{D}_{\sqrt{r}} \setminus \mathbb{D}_r$. Lastly, we extend $\mu_\lambda$ to $\mathbb{D}$ by pulling back $\mu_\lambda|_{\mathbb{D}_{\sqrt{r}}}$ under the map $z\mapsto z^2$ on $\mathbb{D}\setminus \mathbb{D}_{\sqrt{r}}$. Thus $\mu_\lambda\in L^\infty(\mathbb{D})$, and it is readily verified that $\mu_\lambda$ is $g_\lambda$-invariant. 
 
\end{rem}
 
\begin{prop} Let $\lambda\in\mathbb{D}$ and $g_\lambda$, $\mu_\lambda$, $B_\lambda$ as in Remark \ref{Blaschke_product}. Then there exists a quasiconformal integrating map $\phi_\lambda: \mathbb{D}\rightarrow \mathbb{D}$ for $\mu_\lambda$ such that $\phi_\lambda \circ g_\lambda \circ \phi_\lambda^{-1} = B_\lambda$. 
\end{prop}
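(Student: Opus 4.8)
The plan is to run the standard quasiconformal surgery of \cite[Section 4.2.1]{BF14}: integrate the invariant Beltrami coefficient $\mu_\lambda$, and observe that conjugating $g_\lambda$ by the integrating map straightens it to a holomorphic, hence Blaschke, self-map of $\mathbb{D}$. First I would record that, by the construction in Remark \ref{Blaschke_product}, $\mu_\lambda\in L^\infty(\mathbb{D})$ with $\|\mu_\lambda\|_\infty<1$ (the dilatation is controlled by that of the quasiregular interpolation $h_\lambda$, and pulling back under the holomorphic map $z\mapsto z^2$ does not increase it). To produce an integrating map that is a self-map of $\mathbb{D}$, I would reflect $\mu_\lambda$ across $\mathbb{T}$ via $z\mapsto 1/\overline{z}$ to obtain a Beltrami coefficient on $\hat{\mathbb{C}}$ invariant under this involution; solving the Beltrami equation on $\hat{\mathbb{C}}$ by the measurable Riemann mapping theorem and normalizing the solution to commute with $z\mapsto 1/\overline{z}$ (for instance, fixing $0$ and $\infty$) yields a quasiconformal map preserving $\mathbb{T}$, whose restriction $\phi_\lambda\colon\mathbb{D}\to\mathbb{D}$ integrates $\mu_\lambda$.

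Next I would set $F:=\phi_\lambda\circ g_\lambda\circ\phi_\lambda^{-1}$. Since $\phi_\lambda^{-1}$ is quasiconformal, $g_\lambda$ is quasiregular, and $\phi_\lambda$ is quasiconformal, $F$ is a quasiregular self-map of $\mathbb{D}$. The key point is that, because $\mu_\lambda$ is $g_\lambda$-invariant and $\phi_\lambda$ integrates $\mu_\lambda$, the chain rule for complex dilatations forces the dilatation of $F$ to vanish almost everywhere; by Weyl's lemma $F$ is holomorphic on $\mathbb{D}$. Since $g_\lambda$ is a proper degree $2$ map of $\mathbb{D}$ that coincides with $z\mapsto z^2$ near $\mathbb{T}$, the map $F$ is a proper holomorphic self-map of $\mathbb{D}$ of degree $2$ extending continuously to $\mathbb{T}$, i.e.\ a degree $2$ Blaschke product.

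It remains to identify $F$ with $B_\lambda$. Because $\mu_\lambda\equiv 0$ on $\mathbb{D}_r$, the map $\phi_\lambda$ is conformal near $0$; as $g_\lambda$ fixes $0\in\mathbb{D}_r$ with $g_\lambda'(0)=B_\lambda'(0)=\lambda$, the map $F$ fixes the point $\phi_\lambda(0)\in\mathbb{D}$ with multiplier $\lambda$, so $F$ has an attracting fixed point of multiplier $\lambda$. A degree $2$ Blaschke product with an attracting fixed point in $\mathbb{D}$ is determined up to M\"obius conjugacy by this multiplier: moving the fixed point to $0$ writes such a map as $e^{i\theta}z\tfrac{z-a}{1-\overline{a}z}$ with $-e^{i\theta}a=\lambda$, and a further conjugating rotation $z\mapsto e^{-i\theta}z$ brings it to the normal form $z\tfrac{z+\lambda}{1+\overline{\lambda}z}=B_\lambda$. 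Since the integrating map is unique only up to post-composition with an automorphism $A\in\mathrm{Aut}(\mathbb{D})$, and replacing $\phi_\lambda$ by $A\circ\phi_\lambda$ replaces $F$ by $A\circ F\circ A^{-1}$, I would choose $A$ to be the automorphism realizing this conjugacy, so that $\phi_\lambda\circ g_\lambda\circ\phi_\lambda^{-1}=B_\lambda$ exactly. The main obstacle is the bookkeeping in this last step: first arranging the integrating map to be a genuine self-map of $\mathbb{D}$, and then verifying that the attracting multiplier is a complete $\mathrm{Aut}(\mathbb{D})$-conjugacy invariant for degree $2$ Blaschke products, so that the normalization can be selected to yield $B_\lambda$ on the nose rather than merely a conjugate of it.
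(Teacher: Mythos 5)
Your proposal is correct and follows essentially the same route as the paper: integrate the $g_\lambda$-invariant coefficient $\mu_\lambda$ by the measurable Riemann mapping theorem, conclude via Weyl's lemma that the conjugate is a degree-2 Blaschke product, and then normalize so that it equals $B_\lambda$. The only difference is in the bookkeeping of the normalization --- the paper pins it down up front by requiring $\phi_\lambda(0)=0$ and $\phi_\lambda(1)=(1-\lambda)/(1-\overline{\lambda})$ (the boundary fixed point of $B_\lambda$), whereas you post-compose with an automorphism of $\mathbb{D}$ after checking that the attracting multiplier is a complete $\mathrm{Aut}(\mathbb{D})$-conjugacy invariant; both work.
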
 

\begin{proof} Let $\phi_\lambda: \mathbb{D} \rightarrow \mathbb{D}$ be an integrating map for $\mu_\lambda$ such that $\phi_\lambda(0)=0$ and $\phi_\lambda(1)=(1-\lambda)/(1-\overline{\lambda})$. Then $\phi_\lambda \circ g_\lambda \circ \phi_\lambda^{-1}$ is a Blaschke product, and since $\phi_\lambda \circ g_\lambda \circ \phi_\lambda^{-1}$ fixes $(1-\lambda)/(1-\overline{\lambda})$, the conclusion follows.
\end{proof}

\begin{definition} Let $\Phi: A_0 \rightarrow \mathbb{D}$ be the B\"ottcher coordinate of Theorem \ref{bottcher_coord} applied to $\sigma_0|_{A_0}$. Define $\nu_\lambda$ to be the pullback (under $\phi$) of $\mu_\lambda$:  \[\nu_\lambda(z):=\mu_\lambda(\Phi(z))\frac{\overline{\partial_z f(z)}}{\partial_z f(z)} \textrm{ for } z\in A_0,\] and $\nu_\lambda(z)=0$ for $z\not\in A_0$. Thus $||\nu_\lambda||_{L^\infty(\mathbb{C})}<1$.
\end{definition}

\begin{rem}\label{symmetry_remark} Suppose $\lambda\in\mathbb{D}\cap\mathbb{R}$. Then the interpolation $h_\lambda$ of Remark \ref{Blaschke_product} can be chosen to satisfy $h(\overline{z})=\overline{h(z)}$, and hence the Beltrami coefficient $\mu_\lambda$ will satisfy the same relation. The B\"ottcher coordinate $\Phi: A_0 \rightarrow \mathbb{D}$ of $\sigma_0|_{A_0}$ can also be chosen to satisfy $\Phi(\overline{z})=\overline{\Phi(z)}$, and hence $\nu_\lambda$ will satisfy the same relation. Thus if we denote by $\psi_\lambda: \mathbb{C} \rightarrow\mathbb{C}$ an  integrating map for $\nu_\lambda$, we may ensure that \begin{equation}\label{symmetry} \psi_\lambda(\overline{z})=\overline{\psi_\lambda(z)}. \end{equation}
\end{rem}

 

\begin{thm}\label{qd_domain_surgery} Let $\lambda\in(-1,1)$, and $\psi_\lambda:\mathbb{C}\rightarrow\mathbb{C}$ an integrating map for $\nu_\lambda$ satisfying \emph{(\ref{symmetry})}. Then $\psi_\lambda\circ f_0(\mathbb{D})$ is a quadrature domain. 
\end{thm}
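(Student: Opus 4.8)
The plan is to produce an explicit Schwarz function for the domain $\Omega:=\psi_\lambda\circ f_0(\mathbb{D})$. Note first that $\Omega$ is the homeomorphic image of the domain $f_0(\mathbb{D})$ under $\psi_\lambda$, hence is itself a domain, so by definition it suffices to exhibit a function that is meromorphic in $\Omega$, extends continuously to $\overline{\Omega}$, and equals $\overline{w}$ on $\partial\Omega$. The natural candidate is a $\psi_\lambda$-conjugate of $\sigma_0$. The one point of care is that $\nu_\lambda$ is \emph{not} invariant under $\sigma_0$ near the fixed point $0$: in B\"ottcher coordinates the surgery replaces $z\mapsto z^2$ by the quasiregular map $g_\lambda$ of Remark~\ref{Blaschke_product}, and $g_\lambda\neq z^2$ on $\mathbb{D}_{\sqrt{r}}$. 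I therefore first conjugate a quasiregular model rather than $\sigma_0$ directly.

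Let $\Phi:A_0\to\mathbb{D}$ be the B\"ottcher coordinate of Theorem~\ref{bottcher_coord}, conjugating $\sigma_0|_{A_0}$ to $z\mapsto z^2$, and define a quasiregular map $G$ on $f_0(\mathbb{D})$ by
\[ G:=\Phi^{-1}\circ g_\lambda\circ\Phi \ \text{ on } A_0,\qquad G:=\sigma_0 \ \text{ on } f_0(\mathbb{D})\setminus A_0. \]
Since $g_\lambda(z)=z^2$ for $z\in\mathbb{D}\setminus\mathbb{D}_{\sqrt{r}}$, the two definitions agree on $A_0\setminus\Phi^{-1}(\mathbb{D}_{\sqrt{r}})$; thus $G$ is a well-defined quasiregular map coinciding with $\sigma_0$ off the compact set $\Phi^{-1}(\overline{\mathbb{D}_{\sqrt{r}}})$, which is compactly contained in $A_0$. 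In particular $G=\sigma_0$ on a full neighborhood of $\partial f_0(\mathbb{D})$.

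Next I claim $\nu_\lambda$ is $G$-invariant on $f_0(\mathbb{D})$. On $A_0$ we have $\nu_\lambda=\Phi^{*}\mu_\lambda$, and $\mu_\lambda$ is $g_\lambda$-invariant by Remark~\ref{Blaschke_product}; conjugating by the conformal map $\Phi$ shows $\nu_\lambda$ is $G$-invariant there. On $f_0(\mathbb{D})\setminus A_0$ we have $\nu_\lambda\equiv0$ and $G=\sigma_0$, and since the basin is totally invariant (if $\sigma_0(z)\in A_0$ then $\sigma_0^n(z)\to0$, so $z\in A_0$) the image $\sigma_0(z)$ lies outside $A_0=\operatorname{supp}\nu_\lambda$ whenever $z\notin A_0$; hence $G^{*}\nu_\lambda=\nu_\lambda$ there as well. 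As $\psi_\lambda$ integrates $\nu_\lambda$ and $\nu_\lambda$ is $G$-invariant, the conjugate
\[ \sigma_\lambda:=\psi_\lambda\circ G\circ\psi_\lambda^{-1} \]
is quasiregular on $\Omega$ with Beltrami coefficient vanishing almost everywhere, and is orientation preserving because $G$ and $\psi_\lambda$ are; by Weyl's lemma $\sigma_\lambda$ is therefore meromorphic in $\Omega$. (Equivalently, $\phi_\lambda\circ\Phi\circ\psi_\lambda^{-1}$ is conformal on $\psi_\lambda(A_0)$ and conjugates $\sigma_\lambda$ to the Blaschke product $B_\lambda$.)

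Finally I read off the boundary values. Since $G=\sigma_0$ near $\partial f_0(\mathbb{D})$, there $\sigma_\lambda=\psi_\lambda\circ\sigma_0\circ\psi_\lambda^{-1}$; as $\sigma_0$ is the Schwarz function of $f_0(\mathbb{D})$ it extends continuously to $\partial f_0(\mathbb{D})$ with $\sigma_0(z)=\overline{z}$, and since $\psi_\lambda$ is a homeomorphism of $\mathbb{C}$ the map $\sigma_\lambda$ extends continuously to $\partial\Omega$. For $w=\psi_\lambda(z)\in\partial\Omega$ with $z\in\partial f_0(\mathbb{D})$, the symmetry relation~(\ref{symmetry}) gives
\[ \sigma_\lambda(w)=\psi_\lambda(\sigma_0(z))=\psi_\lambda(\overline{z})=\overline{\psi_\lambda(z)}=\overline{w}. \]
Thus $\sigma_\lambda$ is a Schwarz function for $\Omega$, and $\Omega$ is a quadrature domain. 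The crux of the argument is the meromorphy step: the essential point is that one must conjugate the modified model $G$ (whose B\"ottcher dynamics is $g_\lambda$) rather than $\sigma_0$ itself, since $\nu_\lambda$ is invariant for the former but not the latter, and then verify that invariance persists globally, in particular on the escaping part of $f_0(\mathbb{D})$, via total invariance of the basin.
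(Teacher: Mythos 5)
Your proposal is correct and follows essentially the same route as the paper: both construct the quasiregular hybrid map (your $G$ is the paper's $F_\lambda$) by replacing $\sigma_0$ on $A_0$ with $\Phi^{-1}\circ g_\lambda\circ\Phi$, verify $\nu_\lambda$-invariance, straighten by $\psi_\lambda$, and use the symmetry relation (\ref{symmetry}) to read off $\sigma_\lambda(w)=\overline{w}$ on the boundary. The only (immaterial) difference is that you verify the Schwarz-function definition directly where the paper cites Lemma 2.3 of \cite{AS}, and you spell out the invariance check on $f_0(\mathbb{D})\setminus A_0$ and the Weyl's lemma step in more detail.
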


\begin{proof} We define the function \begin{align}
 F_\lambda(z) := \left\{
        \begin{array}{ll}
        \sigma_0(z) & \quad \textrm{ if } z \in f_0(\mathbb{D})\setminus A_0, \\            
            \Phi^{-1} \circ g_\lambda \circ \Phi(z) & \quad \textrm{ if } z \in A_0.
        \end{array}
    \right.
     \end{align}


\noindent We claim that $F_\lambda$ is quasiregular. Indeed, $\sigma_0$ and $\Phi^{-1} \circ g_\lambda \circ \Phi$ are both quasiregular in $f_0(\mathbb{D})\setminus A_0$, $A_0$, respectively. Moreover, $\sigma_0$ and $F_\lambda$ agree on a neighborhood of $\partial A_0$ (recall $\partial A_0$ is locally connected by Theorem \ref{local_connectivity}) by definition of $g_\lambda$. Thus $F_\lambda$ is quasiregular. Observing that $\nu_\lambda$ is $F_\lambda$-invariant (since $\mu_\lambda$ is $g_\lambda$-invariant), we see that $\psi_\lambda \circ F_\lambda \circ \psi_\lambda^{-1}$ is holomorphic in $\psi_\lambda\circ f_0(\mathbb{D})$. Let \[\sigma_\lambda(z):=\psi_\lambda \circ F_\lambda \circ \psi_\lambda^{-1}(z) \textrm{ for } z\in\psi_\lambda\circ f_0(\mathbb{D}).\] By (\ref{symmetry}), we have the relation: \[ \sigma_\lambda(z) = \overline{z} \textrm{ for } z\in\psi_\lambda\circ f_0(\mathbb{T}). \] Thus by Lemma 2.3 of \cite{AS}, the domain $\psi_\lambda\circ f_0(\mathbb{D})$ is a quadrature domain and hence has a Schwarz function $\sigma$. Since $\sigma=\sigma_\lambda$ on $\psi_\lambda\circ f_0(\mathbb{D})$, we have $\sigma=\sigma_\lambda$.

\end{proof}

\begin{notation} Let $\psi_\lambda$ be as in Theorem \ref{qd_domain_surgery}. As in the proof of Theorem \ref{qd_domain_surgery}, we will denote the Schwarz function of $\psi_\lambda\circ f_0(\mathbb{D})$ by $\sigma_\lambda$.
\end{notation}
 
\begin{cor}\label{cor_of_qd_surgery} Let $\lambda\in(-1,1)$. Then $\psi_\lambda(0)$ is a fixed point of $\sigma_\lambda$ with multiplier $\lambda$. 
\end{cor}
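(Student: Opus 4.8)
The plan is to verify the fixed-point property directly and then compute the multiplier by reducing it to a purely conformal calculation, exploiting the fact that the Beltrami coefficient $\nu_\lambda$ is supported away from $0$.

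First I would record that $0$ is a fixed point of $F_\lambda$. Since $0\in A_0$ and $\Phi$ is the B\"ottcher coordinate of $\sigma_0|_{A_0}$, we have $\Phi(0)=0$; as $g_\lambda$ agrees with $B_\lambda$ on $\mathbb{D}_r$ and $B_\lambda(0)=0$, it follows that $F_\lambda(0)=\Phi^{-1}\circ g_\lambda\circ\Phi(0)=\Phi^{-1}(0)=0$. Applying $\psi_\lambda$ and using $\sigma_\lambda=\psi_\lambda\circ F_\lambda\circ\psi_\lambda^{-1}$, we obtain $\sigma_\lambda(\psi_\lambda(0))=\psi_\lambda(F_\lambda(0))=\psi_\lambda(0)$, so $\psi_\lambda(0)$ is a fixed point of $\sigma_\lambda$. (Note $0=f_0(0)\in f_0(\mathbb{D})$, so $\psi_\lambda(0)$ indeed lies in the domain $\psi_\lambda\circ f_0(\mathbb{D})$ of $\sigma_\lambda$.)

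The core of the argument is the multiplier computation, which I would reduce to an invariance statement. The key observation is that $\mu_\lambda\equiv0$ on $\mathbb{D}_r$ (Remark \ref{Blaschke_product}), hence $\nu_\lambda\equiv0$ on the neighborhood $\Phi^{-1}(\mathbb{D}_r)$ of $0$; therefore the integrating map $\psi_\lambda$ has vanishing dilatation near $0$ and is conformal there. Likewise $\Phi$ is conformal (Theorem \ref{bottcher_coord}), and on $\Phi^{-1}(\mathbb{D}_r)$ one has $F_\lambda=\Phi^{-1}\circ B_\lambda\circ\Phi$, so $F_\lambda$ is conformally conjugate to $B_\lambda$ as a germ at the fixed point $0$. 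Since the multiplier of a fixed point is invariant under conformal conjugacy, $F_\lambda'(0)=B_\lambda'(0)$, and the direct computation $B_\lambda(z)=z\,\frac{z+\lambda}{1+\overline{\lambda}z}$ gives $B_\lambda'(0)=\lambda$. Because $\psi_\lambda$ is conformal near $0$ as well, the same invariance applied to $\sigma_\lambda=\psi_\lambda\circ F_\lambda\circ\psi_\lambda^{-1}$ yields $\sigma_\lambda'(\psi_\lambda(0))=F_\lambda'(0)=\lambda$, which is the assertion.

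I do not expect a serious obstacle: the statement is essentially bookkeeping once the two conformality facts are isolated. The only point genuinely requiring care is the claim that $\psi_\lambda$ is conformal in a full neighborhood of $0$. This rests on tracing through the construction of $\nu_\lambda$ to confirm that it vanishes \emph{identically} on $\Phi^{-1}(\mathbb{D}_r)$ (and not merely that it has small norm), since it is precisely this exact vanishing that transports the multiplier $\lambda$ from $B_\lambda$ through $F_\lambda$ to $\sigma_\lambda$ without quasiconformal distortion.
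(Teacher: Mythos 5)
Your proposal is correct and follows essentially the same route as the paper: both unravel $\sigma_\lambda=\psi_\lambda\circ F_\lambda\circ\psi_\lambda^{-1}$ to exhibit $\sigma_\lambda$ near $\psi_\lambda(0)$ as a conformal conjugate of $B_\lambda$ near $0$, and then read off the multiplier from $B_\lambda'(0)=\lambda$. The only difference is that you make explicit the justification (via $\mu_\lambda\equiv 0$ on $\mathbb{D}_r$, hence $\nu_\lambda\equiv 0$ on $\Phi^{-1}(\mathbb{D}_r)$) for the conformality of $\psi_\lambda$ near $0$, which the paper simply asserts.
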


\begin{proof} We continue with notation as in the proof of Theorem \ref{qd_domain_surgery}. Unravelling the definitions, we have \begin{equation}\label{fixed_point_multiplier} \sigma_\lambda\circ\psi_\lambda(0) = \psi_\lambda \circ  \Phi^{-1} \circ B_\lambda \circ \Phi \circ \psi_\lambda^{-1} \circ\psi_\lambda(0). \end{equation} From (\ref{fixed_point_multiplier}) it is evident that $\psi_\lambda(0)$ is fixed under $\sigma_\lambda$. Noting that $\psi_\lambda$ and $\Phi$ are both conformal in a neighborhood of $0$, it follows that $\sigma_\lambda'(\psi_\lambda(0))=\lambda$ since $B_\lambda'(0)=\lambda$. 


\end{proof}



\begin{proof}[Proof of Theorem~\ref{main_theorem}]  Let $c\in(-.75, .25)$. Then $p_c$ has a fixed point of multiplier $\lambda_c\in(-1,1)$. Let $\sigma:=\sigma_{\lambda_c}$ be the Schwarz function of the quadrature domain $\psi_{\lambda_c}\circ f_0(\mathbb{D})$ as in Theorem \ref{qd_domain_surgery}. We will show that $\sigma$ is a conformal mating of $p_c$ and $\Gamma$, namely we will verify Conditions (1)-(3) in Definition \ref{mating}, starting with Condition (1).

\vspace{5mm}

By Corollary \ref{cor_of_qd_surgery}, $\sigma$ has a fixed point of multiplier $\lambda_c$. Then by Theorem \ref{Fatou_set_conjugacy}, there is a conformal map $\psi_p: \mathcal{F}(p) \rightarrow \mathcal{F}(\sigma)$ satisfying \begin{equation}\label{polynomial_conjugacy} \psi_p\circ p(z)=\sigma_f\circ\psi_p(z) \textrm{ for } z\in\mathcal{F}(p). \end{equation} Note $\mathcal{J}(\sigma)$ is a Jordan curve since it is a quasiconformal image of the Jordan curve $\partial I(\sigma_0)$ (see Proposition \ref{Jordan_curve}). Thus since $\mathcal{J}(p)$ is a Jordan curve, the relation (\ref{polynomial_conjugacy}) extends to $\psi_p: \mathcal{J}(p) \rightarrow \mathcal{J}(\sigma)$. Thus we have verified Condition (1).


\vspace{5mm}

Next we consider Condition (2) of Definition \ref{mating}. As $\psi_\lambda\circ f_0(\mathbb{D})$ is a $\mathbb{R}$-symmetric quadrature domain, it is readily checked that there is a regular function (see Definition \ref{regular}) $f$ such that $\sigma=\sigma_f$. Let $\psi_\Gamma: \mathbb{H} \rightarrow I(\sigma)$ be the map obtained by applying Theorem \ref{group_conjugacy} to the regular map $f$. Then by Theorem \ref{group_conjugacy}, we have that $\psi_\Gamma(\overline{U})=\overline{T_f}$ and \begin{equation}\label{conjugacy_inside} \psi_\Gamma\circ \rho(z)=\sigma_f\circ\psi_\Gamma(z) \textrm{ for } z\in \mathbb{H}\setminus U. \end{equation} Since $\partial I(\sigma)=\mathcal{J}(\sigma)$ is a Jordan curve by Proposition \ref{Jordan_curve}, the conformal map $\psi_\Gamma$ extends to a homeomorphism $\psi_\Gamma: \hat{\mathbb{R}} \rightarrow \mathcal{J}(f)$ satisfying (\ref{conjugacy_inside}) for $z\in\hat{\mathbb{R}}$. 


\vspace{5mm}

Lastly, we verify Condition (3) of Definition \ref{mating}. We first show that \begin{equation}\label{demonstrating_equiv} \psi_\Gamma(t)=\psi_p(\phi_p\circ\mathcal{E}(t)) \textrm{ for all } t\in\hat{\mathbb{R}}. \end{equation} We record the relations \begin{align}\label{record_relations} \mathcal{E} \circ \psi_\Gamma^{-1} \circ \sigma(w) = (z\mapsto z^2) \circ \mathcal{E} \circ \psi_\Gamma^{-1}(w) \textrm{ for } w\in \mathcal{J}(\sigma)\textrm{, and } \\ \label{record_relations2} \psi_p \circ \phi_p (z^2) = \sigma \circ \psi_p \circ \phi_p(z) \textrm{ for } z\in\mathbb{T}.    \end{align} Using (\ref{record_relations}) and (\ref{record_relations2}) we compute \begin{equation} \mathcal{E} \circ \psi_\Gamma^{-1} \circ \psi_p \circ \phi_p (z^2) = (z\mapsto z^2) \circ  \mathcal{E} \circ \psi_\Gamma^{-1} \circ \psi_p \circ \phi_p(z) \textrm{ for } z\in\mathbb{T}. \end{equation} In other words, \begin{equation}\label{identity} \mathcal{E} \circ \psi_\Gamma^{-1} \circ \psi_p \circ \phi_p: \mathbb{T} \rightarrow \mathbb{T} \end{equation} is an orientation-preserving homeomorphism of $\mathbb{T}$, which conjugates $z\mapsto z^2$ to itself. The only such map is the identity, and so we have: \begin{equation}\label{identity2}\mathcal{E} \circ \psi_\Gamma^{-1} \circ \psi_p \circ \phi_p(s)=s \textrm{ for } s\in\mathbb{T}.  \end{equation} Thus, the relation (\ref{demonstrating_equiv}) follows by taking $s=\mathcal{E}(t)$ in (\ref{identity2}).

It remains to show that if $\psi_\Gamma(z)=\psi_p(w)$, then $z\sim w$. To this end, let us assume that $z$, $w$ are such that $\psi_\Gamma(z)=\psi_p(w)$. Note then that this implies $z=t\in\hat{\mathbb{R}}$ and $w=\phi_p(s)$ for $s\in\mathbb{T}$. Thus $t=\psi_\Gamma^{-1}\circ\psi_p\circ\phi_p(s)$. By (\ref{identity2}), we also have \[ \psi_\Gamma^{-1}\circ\psi_p\circ\phi_p(s) = \mathcal{E}^{-1}(s). \] We conclude that $s=\mathcal{E}(t)$. Thus since $t\sim\phi_p\circ\mathcal{E}(t)$ by definition of $\sim$, and $z=t$ and $w=\phi_p(s)=\phi_p\circ\mathcal{E}(t)$, we conclude that $z\sim w$ as needed. 

\end{proof}

\section{Simultaneous Uniformization}\label{sim_unif_sec}





\begin{definition}\label{Fuchsian_group} Let $d\geq2$. We define a Fuchsian group $\Gamma_d$ as follows. Let $(C_j)_{j=1}^{d+1}$ be the Euclidean circles such that $C_j$ intersects $|z|=1$ at right-angles at the points $\exp{(2\pi i(j-1)/(d+1))}$, $\exp{(2\pi i j/(d+1))}$. Denote the common radius of the circles $(C_j)$ by $r$, and denote the center of $C_j$ by $z_j$. The group $\Gamma_d$ is defined by generators \begin{equation} \gamma_j(z):=\overline{z_j}+\frac{r^2}{z-z_j} \end{equation} for $1 \leq j \leq d/2+1$ or $1 \leq j \leq (d+1)/2$ according to whether $d$ is even or odd, respectively. 
\end{definition}

\begin{rem} When $d=2$, the Fuchsian group $\Gamma_2$ defined in Definition \ref{Fuchsian_group} is M\"obius conjugate to the group $\Gamma$ of Theorem \ref{main_theorem}.
\end{rem}

\begin{definition} Let $U$ denote the component of $\mathbb{D}\setminus\cup_j C_j$ containing $0$. We note that $\mathbb{D}\setminus U=\sqcup_j\textrm{int}\hspace{.5mm}C_j$. We define a map $\rho: \mathbb{D}\setminus U\rightarrow\mathbb{D}$ by $\rho(z):=\overline{z_j}+r^2/(z-z_j)$ if $z\in\textrm{int}\hspace{.5mm}C_j$. 


\end{definition}

\begin{figure}
\centering
\scalebox{.3}{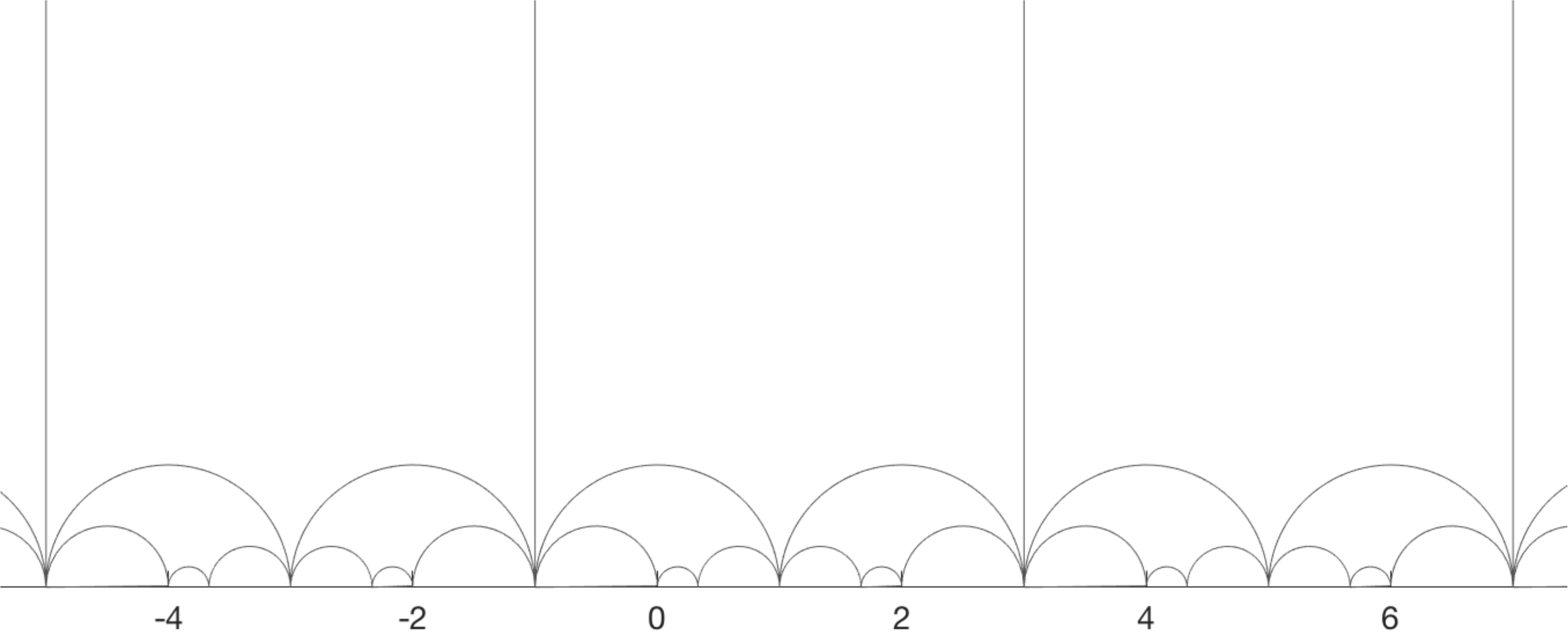}
\captionsetup{width=.9\textwidth}
\caption{Illustrated is a fundamental domain $U$ of the group $\Gamma_3$, together with several images under $U$ of elements of $\Gamma_3$. The group $\Gamma_3$ is shown acting on $\mathbb{H}$ rather than on $\mathbb{D}$ (as in Definition \ref{Fuchsian_group}) in order to be consistent with the convention for $\Gamma\equiv\Gamma_2$. Labels are affixed to those ``tiles'' arising from words of length 1. }
\label{fig:fund_dom_next.pdf_tex}
\end{figure}

\begin{rem} Definition \ref{mating} of conformal mating is restricted to the class $\mathcal{R}$, but the definition is readily extended to maps of higher degree, or one which is quasiconformally conjugate to a map in class $\mathcal{R}$. This is the setting in which we will work below.
\end{rem}

\noindent \emph{Sketch of Proof of Theorem \ref{Theorem_B}.}  We fix $p$, $\beta$ as in the statement of Theorem \ref{Theorem_B}. Let \begin{equation} f(z):=\frac{z}{1+z^{d+1}/d}. \end{equation} Then $f$ is univalent in $\mathbb{D}$, and $\sigma:=\sigma_f$ has a critical fixed point at $0$ of degree $d$. Let $A_\sigma$ denote the basin of attraction of $0$ for $\sigma$. There is a B\"ottcher coordinate $\Phi: A_\sigma \rightarrow \mathbb{D}$ conjugating $\sigma: A_\sigma\rightarrow A_\sigma$ to the map $z\mapsto z^d$ on $\mathbb{D}$. As $p$ is in the principal hyperbolic component in $\textrm{Pol}_d$, $p$ has a single attracting fixed point, and all critical points of $p$ are in the corresponding basin of attraction $A_p$.  Thus letting $\phi_p: A_p \rightarrow \mathbb{D}$ denote a conformal map, we have that $\phi_p\circ p\circ\phi_p^{-1}: \mathbb{D} \rightarrow \mathbb{D}$ is an expanding Blaschke-product of degree $d$. Denote this Blaschke product by $B$. By choosing an appropriate $r<1$, we can mimic the definition of (\ref{quasiregular_def}) and define a quasiregular interpolation $h$ between the degree $d$ maps $z\mapsto z^d$ on $|z|=\sqrt{r}$ and $B$ on $|z|=r$, so that: \begin{align} 
g(z) := \left\{
        \begin{array}{ll}
        z^d & \quad \textrm{ if } z \in \mathbb{D}\setminus \mathbb{D}_{\sqrt{r}}, \\            
            h(z) & \quad \textrm{ if } z \in \mathbb{D}_{\sqrt{r}} \setminus \mathbb{D}_r, \\
            B(z) & \quad \textrm{ if } z \in \mathbb{D}_r.
        \end{array}
    \right.
 \end{align} is a degree $d$ quasiregular map of $\mathbb{D}$. As in Section \ref{qc_surgery_sec}, we may define a $g$-invariant Beltrami coefficient $\mu$ on $\mathbb{D}$ by letting $\mu\equiv0$ in $\mathbb{D}_r$, $\mu\equiv h_{\overline{z}}/h_z$ in $\mathbb{D}_{\sqrt{r}} \setminus \mathbb{D}_r$, and pulling back elsewhere. Next we define \begin{align}
 F(z) := \left\{
        \begin{array}{ll}
        \sigma(z) & \quad \textrm{ if } z \in f_0(\mathbb{D})\setminus A, \\            
            \Phi^{-1} \circ g \circ \Phi(z) & \quad \textrm{ if } z \in A.
        \end{array}
    \right.
     \end{align} Let $\nu$ be the Beltrami coefficient defined on $A$ by pulling back $\mu$ under $\Phi$.


Now we consider the group structure of $\sigma$ on the escaping set $I=I_\sigma$ (see Definition \ref{tile_escaping}). By mimicking the proof of Theorem \ref{group_conjugacy}, we can show there exists a $\Gamma_d$-invariant conformal map $\phi_\Gamma: \mathbb{D} \rightarrow I$, such that $\phi_\Gamma(\overline{U})=\overline{T_f}$ and \begin{equation} \rho(z) = \phi_\Gamma^{-1}\circ\sigma_f\circ\phi_\Gamma(z) \textrm{ for } z\in \mathbb{D}\setminus\overline{U}. \end{equation} Since $\beta\in\beta(\Gamma_d)$, there is a $\Gamma_d$-invariant quasiconformal map $\phi_\beta: \mathbb{D}\rightarrow\mathbb{D}$ inducing the element $\beta\in\beta(\Gamma_d)$. Let $\mu_\beta$ be the Beltrami coefficient of $\phi_\beta\circ\phi_\Gamma^{-1}$ defined in $I$. Extend $\nu$ (up to now defined only in $A$) to the escaping set $I$ by defining $\nu\equiv\mu_\beta$ in $I$. Lastly, we define $\nu(z)=0$ for $z\in\hat{\mathbb{C}}\setminus(A\cup I)$.

We have defined $\nu$ so that $\nu$ is $F$-invariant: this follows as $\phi_\Gamma$, $\phi_\beta$ are $\Gamma_d$-invariant, and $\mu$ is $g$-invariant. Let $\psi: \mathbb{C}\rightarrow\mathbb{C}$ denote a straightening map for $\nu$. Define \begin{equation} p_c\sqcup\beta(z):=\psi \circ F \circ \psi^{-1}(z) \textrm{ for } z\in\psi\circ f(\mathbb{D}).  \end{equation} Then $p_c\sqcup\beta$ is holomorphic since $\nu$ is $F$-invariant. One readily checks that $p_c\sqcup\beta: \psi(A) \rightarrow \psi(A)$ and $p: \textrm{int}\hspace{.5mm}K(p) \rightarrow \textrm{int}\hspace{.5mm}K(p)$ are conformally conjugate. Likewise, $\phi_\beta\circ\phi_\Gamma^{-1}\circ\psi^{-1}: \psi(I)\rightarrow\mathbb{D}\setminus \phi_\beta(U)$ defines a conjugacy between $p_c\sqcup\beta: \psi(I)\rightarrow\hat{\mathbb{C}}$ and $\phi_\beta\circ\rho\circ\phi_\beta^{-1}: \mathbb{D}\setminus \phi_\beta(U) \rightarrow \mathbb{D}$. For $f$ as in (\ref{base_point}), one may mimic the proofs in Section \ref{topology} (or Lemma 4.10 of \cite{lazebnik2020bers}) to show that $A$ and $I$ share a common boundary which is a Jordan curve. Thus the same holds for the common boundary between the filled Julia set and escaping set of $p_c\sqcup\beta$. The last verification in the definition of conformal mating thus proceeds as in the proof of Theorem \ref{main_theorem}.
\qed




\section{Remarks}\label{remarks_section}

We remark briefly on several questions arising from the present work. The first is that the phase transition of Figure \ref{fig:deltoid_flow.pdf_tex} is conjectural: we have neither formulated nor proven a precise statement. One such statement would be that any family of (appropriately normalized) $\mathbb{R}$-symmetric, simply connected quadrature domains whose Schwarz functions have attracting fixed points with multipliers $\rightarrow-1^{+}$ converge to a quadrature domain of connectivity 2: the cardioid and circle (up to a M\"obius transformation). Moreover, the Schwarz function of the limiting cardioid and circle should be a conformal mating of $z\mapsto z^2-.75$ and $\Gamma$.

Another approach to the proof of Theorem \ref{main_theorem} would be to formulate explicitly the uniformizing Riemann maps of the simply connected quadrature domains, and calculate the multiplier of the attracting fixed point. The difficulty would then be in proving univalence of the maps. Nevertheless, we record the formulas. Consider the maps \begin{equation}\label{parameter_family} f_t(z):=\frac{z+t}{1+z^3/2}. \end{equation} Numerical evidence suggests that there exists $t_0<0$ such that for $t\in[t_0,1)$, the map $f_t$ is univalent on the interior of the circle passing through the three finite critical points of $f_t$. Moreover, $t=t_0$ yields a Schwarz function with a parabolic fixed point, and as $t\rightarrow 1^-$, one has the limiting behavior as discussed above and shown in Figure \ref{fig:deltoid_flow.pdf_tex} (up to inversion). One can also give formulas for an analogous family of maps conformal in the fixed domain $\mathbb{D}$.

We have focused on the holomorphic setting in this work, however there is also interest in studying the (anti-holomorphic) Schwarz reflection maps associated to the family (\ref{parameter_family}). Let $C_t$ denote the circle passing through the three finite critical points of $f_t$. Numerical evidence indicates that there is a natural homeomorphism of (1) the $\mathbb{C}$-parameters $t$ for which the maps (\ref{parameter_family}) are univalent in the interior of $C_t$ with (2) the principal hyperbolic component (the main deltoid) in the parameter space of the family $z\mapsto\overline{z}^2+c$: both are parametrized by the multiplier of the relevant attracting fixed point. The boundary is of  particular interest: simple parabolics on the boundary of the main deltoid correspond to simply connected quadrature domains, whereas the three double parabolics correspond to the cardioid and circle. 




\bibliographystyle{alpha}

\end{document}